\title{Local permutation stability}
\author{Henry Bradford}
\newtheorem{thm}{Theorem}[section]
\newtheorem{lem}[thm]{Lemma}
\newtheorem{propn}[thm]{Proposition}
\newtheorem{coroll}[thm]{Corollary}
\newtheorem{defn}[thm]{Definition}
\newtheorem{ex}[thm]{Example}
\newtheorem{conj}[thm]{Conjecture}
\newtheorem{rmrk}[thm]{Remark}
\newtheorem{qu}[thm]{Question}
\newtheorem{prob}[thm]{Problem}
\DeclareMathOperator{\Alt}{Alt}
\DeclareMathOperator{\BS}{BS}
\DeclareMathOperator{\diam}{diam}
\DeclareMathOperator{\FAlt}{FAlt}
\DeclareMathOperator{\FSym}{FSym}
\DeclareMathOperator{\gen}{gen}
\DeclareMathOperator{\GL}{GL}
\DeclareMathOperator{\Homeo}{Homeo}
\DeclareMathOperator{\id}{id}
\DeclareMathOperator{\im}{im}
\DeclareMathOperator{\IRS}{IRS}
\DeclareMathOperator{\Prob}{Prob}
\DeclareMathOperator{\SL}{SL}
\DeclareMathOperator{\Stab}{Stab}
\DeclareMathOperator{\stat}{stat}
\DeclareMathOperator{\Sub}{Sub}
\DeclareMathOperator{\Sym}{Sym}
\begin{document}

\maketitle

\begin{abstract}
We introduce a notion of ``local stability in permutations'' 
for finitely generated groups. 
If a group is sofic and locally stable in our sense, 
then it is also locally embeddable into finite groups (LEF). 
Our notion is weaker than the ``permutation stability'' introduced by 
Glebsky-Rivera and Arzhantseva-Paunescu, 
which allows one to upgrade soficity to residual finiteness. 
We prove a necessary and sufficient condition 
for a finitely generated amenable group to be locally permutation stable, 
in terms of invariant random subgroups (IRSs), 
inspired by a similar criterion for 
permutation stability due to
Becker, Lubotzky and Thom.  
We apply our criterion to prove that derived subgroups of topological full groups 
of Cantor minimal subshifts are locally stable, 
using Zheng's classification of IRSs for these groups. 
This last result provides continuum-many 
groups which are locally stable, but not stable. 
\end{abstract}

\section{Introduction}

The existence of a non-sofic group is one of the major unsolved problems 
in group theory, and has been a key motivation behind the recent wave of interest in 
groups which are \emph{stable in permutations} (henceforth \emph{stable}). 
It is known that a sofic stable group must be residually finite, 
so to find a non-sofic group it suffices to find a group which is stable 
but not residually finite \cite{ArPa,GlRi}. 
The plausibility of this strategy was demonstrated by 
De Chiffre, Glebsky, Lubotzky and Thom \cite{ChGlLuTh}, 
who used the analogous notion of \emph{Frobenius stability} 
to produce examples of groups which are not \emph{Frobenius approximable}
(the latter property being analogous to soficity); 
see also \cite{ArPa2} for a construction in the 
context of ``constraint sofic approximations''. 

One difficulty with using stability in permutations as a path to finding a non-sofic group 
is that stability is a rather strong property for a group to satisfy, 
to the extent that the only groups known to be stable have also long been 
known to be sofic. 
In this paper we propose a weakening of stability which nonetheless 
retains the link with soficity: 
a sofic group satisfying our property, which we call \emph{local stability}, 
need not be residually finite, but must be locally embeddable into finite groups (LEF). 
Though it is harder to produce non-LEF groups (LEF being weaker than residual finiteness), 
this may be a price worth paying if locally stable groups prove to be abundant. 
To this end we exhibit many groups which are locally stable but not stable. 

\subsection{Terminology and statement of results} 
\label{TermSubsect}

Let $\Gamma$ be a finitely generated group. 
For $k \in \mathbb{N}$ let $d_k$ be the 
(normalized) Hamming metric on $\Sym(k)$, 
given by: 
\begin{equation*}
d_k (\sigma,\tau) = 1 -\frac{1}{k} 
\lvert \lbrace 1 \leq i \leq k 
: \sigma(i)=\tau(i) \rbrace \rvert
\end{equation*}
for $\sigma,\tau \in \Sym(k)$. 

\begin{defn}
Let $(\phi_n :\Gamma \rightarrow \Sym(k_n))$ be a sequence of functions. 
\begin{itemize}
\item[(i)] $(\phi_n)_{n}$ is an \emph{almost-homomorphism} 
if, for all $g, h \in \Gamma$, 
\begin{equation*}
d_{k_n} \big( \phi_n (g)\phi_n (h) , \phi_n (gh) \big) \rightarrow 0
\text{ as }n \rightarrow \infty;
\end{equation*}

\item[(ii)] $(\phi_n)_{n}$ is a \emph{partial homomorphism} 
if, for all $g, h \in \Gamma$, 
there exists $N > 0$ such that for all 
$n \geq N$, 
$\phi_n (gh) = \phi_n (g)\phi_n (h)$. 

\item[(iii)] $(\phi_n)_{n}$ is \emph{separating} 
if, for every $e \neq g \in \Gamma$, 
\begin{equation*}
d_{k_n} \big( \phi_n (g) , \id_{k_n} \big) \rightarrow 1
\text{ as }n \rightarrow \infty\text{.}
\end{equation*}
\end{itemize}
\end{defn}

\begin{defn} \label{SoficLEFDefn}
The group $\Gamma$ is \emph{sofic} if there is a 
separating almost-homomorphism 
$\phi_n :\Gamma \rightarrow \Sym(k_n)$ 
for some sequence of positive integers $(k_n)$. 
Similarly $\Gamma$ is 
\emph{LEF (locally embeddable into finite groups)} 
if it admits a separating partial homomorphism 
$\phi_n :\Gamma \rightarrow \Sym(k_n)$ 
for some $(k_n)$. 
\end{defn}

\begin{defn} \label{LocalStabDefn}
The group $\Gamma$ is \emph{locally stable} 
(respectively \emph{weakly locally stable}) if, 
for every almost-homomorphism 
(respectively every separating almost-homomorphism) 
$\phi_n :\Gamma \rightarrow \Sym(k_n)$, there is a partial 
homomorphism $\psi_n :\Gamma \rightarrow \Sym(n)$ such that 
for all $g \in \Gamma$, 
\begin{equation*}
d_{k_n} \big( \phi_n (g) , \psi_n (g) \big) \rightarrow 0
\text{ as }n \rightarrow \infty\text{.}
\end{equation*}
\end{defn}

It is immediate from these definitions that 
a sofic weakly locally stable group is LEF. 

\begin{rmrk}
If we further require in the above definition that 
$\psi_n :\Gamma \rightarrow \Sym(k_n)$ is a homomorphism for all $n$, 
then we arrive at the definition of a \emph{stable} 
(respectively \emph{weakly stable}) group. 
\end{rmrk}

\begin{rmrk}
We have: 
\begin{center}
$\begin{array}{ccc}
\text{Stable} & \Rightarrow & \text{Weakly Stable} \\
\Downarrow &  & \Downarrow \\
\text{Locally Stable} & \Rightarrow & \text{Weakly Locally Stable} \\
\end{array}$
\end{center}

\end{rmrk}

One of the consequences of our results will be that 
there are no other implications 
between these four properties. 
Our main result is the following. 

\begin{thm}[Theorem \ref{MainThm}] \label{IntroMainThm}
There is a continuum of pairwise nonisomorphic 
finitely generated groups, 
which are locally stable but not weakly stable. 
\end{thm}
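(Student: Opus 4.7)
The plan is to take the candidate family to be the derived subgroups $\Gamma_X := [[\sigma_X]]'$ of the topological full groups of a continuum of Cantor minimal subshifts $(X,\sigma_X)$, as signalled in the introduction. By work of Matui, each $\Gamma_X$ is finitely generated, infinite and simple; by Juschenko--Monod, it is amenable and hence sofic; by Grigorchuk--Medynets, it is LEF. Being an infinite simple group, $\Gamma_X$ is not residually finite. For the cardinality assertion I would invoke Matui's rigidity theorem: $\Gamma_X \cong \Gamma_Y$ as abstract groups iff $(X,\sigma_X)$ and $(Y,\sigma_Y)$ are flip-conjugate. Since the space of Cantor minimal subshifts modulo flip conjugacy has cardinality $2^{\aleph_0}$ --- for instance distinguished by topological entropy --- the family contains a continuum of pairwise non-isomorphic groups.

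To see that no $\Gamma_X$ is weakly stable, I would combine amenability with the failure of residual finiteness. Fix a separating almost-homomorphism $\phi_n : \Gamma_X \rightarrow \Sym(k_n)$ witnessing soficity. Weak stability would supply homomorphisms $\psi_n : \Gamma_X \rightarrow \Sym(k_n)$ with $d_{k_n}(\phi_n(g),\psi_n(g)) \rightarrow 0$ for every $g$. The triangle inequality then forces $(\psi_n)_n$ itself to be separating, exhibiting $\Gamma_X$ as residually finite --- a contradiction.

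The bulk of the work lies in proving local stability. I would apply the IRS characterisation of local stability for amenable groups proved earlier in this paper, a local analogue of the Becker--Lubotzky--Thom criterion. This reduces the problem to a weak-$*$ approximation property for the space of invariant random subgroups of $\Gamma_X$: every IRS should be a weak-$*$ limit of IRSs arising from the finite groups approximating $\Gamma_X$ in the LEF sense. Here I would feed in Zheng's classification of the IRSs of $\Gamma_X$, in which each ergodic IRS is parametrised by an ergodic $\sigma_X$-invariant probability measure on $X$ together with stabiliser-type data at each point. The required approximation should then follow from the fact that these parameters can be uniformly approximated by clopen, finite-type data coming from the Kakutani--Rokhlin tower structure of $(X,\sigma_X)$, the very same towers that underlie the Grigorchuk--Medynets LEF approximation of $\Gamma_X$.

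The principal obstacle is precisely this last matching step. One must handle not only the IRSs arising from point stabilisers of the diagonal action of $\Gamma_X$ on powers of $X$, but also the non-principal IRSs associated with the ergodic invariant measures themselves; naive approximation by Kakutani--Rokhlin subgroups may suffice for the former but miss the latter, so one has to exploit the fine dynamical information captured by Zheng to produce the required uniform approximation over the whole IRS space.
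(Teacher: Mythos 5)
Your proposal follows essentially the same route as the paper: the same family $\lBrack T \rBrack'$ for Cantor minimal subshifts, non-weak-stability from amenability plus failure of residual finiteness (the Arzhantseva--P\u{a}unescu criterion, which your triangle-inequality argument reproves), the continuum count via rigidity of distinct subshifts, and local stability via the amenable IRS criterion applied to Zheng's classification through the Kakutani--Rokhlin/Grigorchuk--Medynets approximation. The only caveat is that the extra class of ``non-principal IRSs associated with the ergodic invariant measures themselves'' that you worry about in your final paragraph does not exist: by Zheng's theorem every nontrivial ergodic IRS is already a stabilizer pushforward of a product of $T$-invariant ergodic measures on $X^k$, and the matching step is carried out exactly as you suggest, the key point being that for $g$ of word length at most $n$ fixing a point of $X$ is equivalent to the induced permutation fixing the corresponding atom of the $n$-th Kakutani--Rokhlin partition, so the pushed-forward IRSs of the finite partition groups converge to $\mu$.
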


To prove Theorem \ref{IntroMainThm}, 
we give a criterion for a finitely generated 
amenable group 
to be locally stable in terms of Invariant Random Subgroups (IRSs). 
Here our work is indebted to 
the influential paper of Becker, Lubotzky and Thom \cite{BeLuTh}, 
who gave a necessary and sufficient condition in terms of IRSs 
for an amenable group to be stable. 
Recall that an IRS of a discrete group $\Gamma$ is a probability measure on 
the Chabauty space $\Sub(\Gamma)$ of subgroups of $\Gamma$, 
which is invariant with respect to the conjugation action 
of $\Gamma$ on $\Sub(\Gamma)$. 
We denote by $\IRS(\Gamma)$ the space of all invariant random subgroups 
of $\Gamma$, equipped with the weak$^{\ast}$ topology. 
For $\Gamma$ a group generated by $d$ elements, 
we fix an epimorphism $\mathbb{F} \twoheadrightarrow \Gamma$ 
from a rank-$d$ free group $\mathbb{F}$ onto $\Gamma$. 
Then $\IRS (\Gamma)$ is naturally a subspace of $\IRS (\mathbb{F})$. 

\begin{thm}[Theorem \ref{IRSMainThm}] \label{IntroAmenMainThm}
Suppose $\Gamma$ is a finitely generated amenable group. 
Then $\Gamma$ is locally stable if and only if, 
for every IRS $\mu$ of $\Gamma$, 
there exists a sequence of $d$-marked finite groups $\Delta_n$ 
converging to $\Gamma$ in the space of marked groups, 
and an IRS $\nu_n$ of $\Delta_n$ such that 
$\nu_n \rightarrow \mu$ in $\IRS(\mathbb{F})$. 
\end{thm}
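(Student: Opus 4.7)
The plan is to adapt the Becker-Lubotzky-Thom IRS characterisation of stability for amenable groups \cite{BeLuTh}, replacing ``homomorphism'' by ``partial homomorphism'' throughout. The common tool for both directions is the \emph{stabiliser distribution} of a function \(\phi_n : \Gamma \to \Sym(k_n)\): pick \(i \in \{1,\ldots,k_n\}\) uniformly and return the random subset \(\{g \in \Gamma : \phi_n(g)(i) = i\}\), viewed as an element of \(\Prob(\Sub(\mathbb{F}))\). For amenable \(\Gamma\) every IRS arises as a weak\(^{*}\)-limit of such distributions along some almost-homomorphism, and this is the bridge between the statements.

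For the forward implication, given \(\mu \in \IRS(\Gamma)\), I first realise it as the limit of the stabiliser distributions of an almost-homomorphism \(\phi_n\), invoke local stability to obtain a partial homomorphism \(\psi_n\) with \(d_{k_n}(\phi_n(g), \psi_n(g)) \to 0\) for every \(g\), and let \(\Delta_n \leq \Sym(k_n)\) be the subgroup generated by \(\psi_n(s_1), \ldots, \psi_n(s_d)\), equipped with this marking. The stabiliser distribution of \(\psi_n\) still converges to \(\mu\) and coincides with the natural IRS of \(\Delta_n\) acting on \(k_n\) points. Every defining relation of \(\Gamma\) eventually holds in \(\Delta_n\), so marked accumulation points of \((\Delta_n)\) are quotients of \(\Gamma\); to pin the limit down to \(\Gamma\) itself, I exploit that \(\Gamma\) is LEF (being sofic and locally stable), pick a separating partial homomorphism \(\chi_n : \Gamma \to \Sym(l_n)\) with \(l_n/k_n \to 0\), and replace \(\psi_n\) by the disjoint union \(\psi_n \sqcup \chi_n\). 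The extra coordinate is asymptotically negligible in both Hamming distance and the stabiliser measure, yet enforces that every non-identity word of \(\Gamma\) is eventually non-trivial in \(\Delta_n\), giving \(\Delta_n \to \Gamma\) as marked groups.

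For the converse, let \(\phi_n\) be an almost-homomorphism. Arguing subsequentially, assume its stabiliser distributions converge to some \(\mu \in \IRS(\Gamma)\). Choose marked finite quotients \(\Delta_n \to \Gamma\) and IRSs \(\nu_n\) of \(\Delta_n\) with \(\nu_n \to \mu\), and realise each \(\nu_n\) concretely as coming from an action of \(\Delta_n\) on a set \(X_n\) of size \(k_n\) (a disjoint union of coset spaces \(\Delta_n/H\) weighted by \(\nu_n\), with the total cardinality adjusted by an \(o(1)\) correction). The composition \(\Gamma \to \Delta_n \to \Sym(X_n)\) is a partial homomorphism \(\tilde\psi_n\) whose stabiliser distribution is exactly \(\nu_n\). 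The remaining and principal task is to find permutations \(\theta_n \in \Sym(k_n)\) so that \(\psi_n := \theta_n \tilde\psi_n \theta_n^{-1}\) is Hamming-close to \(\phi_n\). This asymptotic-conjugation step is the crux of the argument and the main obstacle: it is where amenability is essential, through an Ornstein-Weiss quasi-tiling/matching principle asserting that two sofic approximations of an amenable group with the same limiting stabiliser IRS are asymptotically conjugate. I would adapt the corresponding matching lemma of \cite{BeLuTh}, verifying that it goes through when the approximating action is pulled back from a marked finite quotient of \(\Gamma\) rather than a genuine finite-index quotient.
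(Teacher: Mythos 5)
Your direction ``locally stable $\Rightarrow$ every IRS is a limit of IRSs of finite marked groups converging to $\Gamma$'' is essentially correct and in fact takes a more direct route than the paper: the paper passes through stability challenges and local solutions, an auxiliary tower of finitely presented marked groups $(\Gamma_n,\mathbf{S}_n)$, LEF approximants $(\Lambda_n,\mathbf{U}_n)$, and the finite groups $\Delta_n \leq \Lambda_n \times (\Gamma_n/K_n)$ built from normal cores, whereas your construction $\Delta_n = \langle \psi_n(S)\rangle$, padded by a small separating partial homomorphism to force convergence to $(\Gamma,\mathbf{S})$ in $\mathcal{G}_d$, achieves the same thing (modulo routine checks: that $k_n \to \infty$ can be arranged, and that the padding perturbs neither the stabiliser distribution nor the Hamming distances by more than $o(1)$).

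The other direction, however, has a genuine gap at exactly the point you flag as the crux. First, a conceptual slip: the $\Delta_n$ in the hypothesis are \emph{not} quotients of $\Gamma$, and there are no homomorphisms $\Gamma \to \Delta_n$ --- in the intended applications $\Gamma$ is an infinite simple group with no finite quotients at all; one only has local embeddings of balls. This matters precisely for the matching step: the Becker--Lubotzky--Thom amenable matching result (Proposition \ref{dstatProp}(ii) here, Proposition 6.8 of \cite{BeLuTh}) requires the model actions $Y_n$ to factor through a \emph{single} amenable marked group, and your $Y_n$ are actions of the varying finite groups $\Delta_n$, only ``approximately'' actions of $\Gamma$. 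Your plan to ``adapt the matching lemma, verifying that it goes through'' for actions pulled back along local embeddings is exactly the nontrivial content: rerunning the Ornstein--Weiss/quasi-tiling argument for partial actions is not a routine verification, and you do not carry it out. The paper avoids any adaptation by a single observation you are missing: all the $Y_n$-actions factor through the diagonal product $\tilde{\Gamma} = \otimes(\Delta_n,\mathbf{T}_n)$ of the convergent sequence, which is amenable because, by Proposition \ref{KAPaLiftProp} and Lemma \ref{LiftAmenLem}, it is an extension of $\Gamma$ by a subgroup of $\bigoplus_n \Delta_n$; then Proposition \ref{dstatProp}(ii) applies verbatim with $(\tilde{\Gamma},\mathbf{T})$ in place of $(\Gamma,\mathbf{S})$ and yields $d_{\gen}(X_n,Y_n)\to 0$, after the cardinalities are equalised via Lemmas \ref{BiggerSetLem} and \ref{IRSApproxLem}. (The fact that each relator of $\Gamma$ eventually acts trivially on $Y_n$, so that the corrected maps are honest partial homomorphisms, comes from $(\Delta_n,\mathbf{T}_n)\to(\Gamma,\mathbf{S})$ in $\mathcal{G}_d$, and the reduction to convergent stabiliser distributions needs the compactness argument of Proposition \ref{ConvStabChallProp}.) Without the diagonal-product device, or a genuinely reworked tiling argument for partial actions, your second direction is incomplete.
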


It is instructive to compare Theorem \ref{IntroAmenMainThm} with 
Theorem 1.3 of \cite{BeLuTh}. There it was proved that a finitely generated 
amenable group $\Gamma$ is stable if and only if every IRS of $\Gamma$ 
is the limit of atomic IRSs of $\Gamma$ supported on finite-index subgroups. 
Our criterion is weaker in the sense that, 
although the $\nu_n$ are atomic and supported on finite-index subgroups 
of $\mathbb{F}$ (we may view the $\nu_n$ as elements of $\IRS(\mathbb{F})$ 
since the $\Delta_n$ are also quotients of $\mathbb{F}$), 
they need not lie in $\IRS(\Gamma)$, since the $\Delta_n$ 
need not be quotients of $\Gamma$. 
If $\Gamma$ is additionally assumed to be finitely presented, however, 
then the $\Delta_n$ are eventually quotients of $\Gamma$, 
and we recover the Becker-Lubotzky-Thom criterion in this special case. 

Our main application of Theorem \ref{IntroAmenMainThm}
is to a family of groups arising from topological dynamics. 

\begin{thm}[Theorem \ref{FullGrpMainThm}] \label{IntroFullGrpMainThm}
Let $\Gamma$ be the topological full group of a Cantor minimal subshift. 
Then the derived subgroup $\Gamma'$ of $\Gamma$ is locally stable. 
\end{thm}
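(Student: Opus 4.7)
The plan is to apply Theorem \ref{IntroAmenMainThm}. The hypotheses are satisfied: by a theorem of Matui, $\Gamma'$ is finitely generated (in fact finitely presented) and simple for any Cantor minimal subshift, while the theorem of Juschenko--Monod shows $\Gamma$, and hence $\Gamma'$, is amenable. It therefore suffices to exhibit, for every $\mu \in \IRS(\Gamma')$, a sequence of $d$-marked finite groups $\Delta_n$ with $\Delta_n \to \Gamma'$ in the space of marked groups, and IRSs $\nu_n \in \IRS(\Delta_n)$ with $\nu_n \to \mu$ in $\IRS(\mathbb{F})$.

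\textbf{The marked finite approximations.} These should come from the proof of Grigorchuk--Medynets that topological full groups of Cantor minimal subshifts are LEF. Given the subshift $(X,T)$ over a finite alphabet, one chooses periodic subshifts $(X_n,T_n)$ over the same alphabet whose languages agree with that of $(X,T)$ on longer and longer windows. The topological full groups $[[T_n]]$ are then finite, being subgroups of symmetric groups on the periodic orbits, and setting $\Delta_n = [[T_n]]'$ and taking generators of the form used by Matui (transpositions associated to clopen bisections), one checks that $\Delta_n \to \Gamma'$ in the marked group topology: indeed, any relation among Matui generators of $\Gamma'$ involves only finitely many words in $T$, whose behaviour is determined by a finite window of the subshift.

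\textbf{Transferring IRSs and the main obstacle.} For $\mu \in \IRS(\Gamma')$, the strategy is to invoke Zheng's classification, which represents $\mu$ in terms of dynamical data for $(X,T)$: schematically, $\mu$ is a mixture of the atomic IRS at the trivial and whole subgroups together with IRSs obtained as derived subgroups of pointwise stabilizers of finite subsets of $X$ distributed according to a $T$-invariant probability measure. For each such piece there is a natural analogue on $\Delta_n$ obtained by replacing $(X,T)$ by $(X_n,T_n)$ and $T$-invariant measures by the uniform measure on the periodic orbit; this furnishes the candidate $\nu_n$. The main obstacle is then to verify weak$^{\ast}$ convergence $\nu_n \to \mu$ in $\IRS(\mathbb{F})$. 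Unpacking weak$^{\ast}$ convergence, this reduces to showing that for any finite $S \subset \mathbb{F}$ and any $S$-pattern of memberships, the probability under $\nu_n$ that a random subgroup realizes the pattern converges to the corresponding probability under $\mu$. This is an equidistribution statement for stabilizers: the key input is that whether a word $w \in \mathbb{F}$ of bounded length fixes a given point depends only on the restriction of $(X,T)$ to a bounded neighbourhood of that point, so the statistics of $w$-stabilization under the uniform measure on $X_n$ converge to those under any $T$-invariant measure on $X$ that is a weak$^{\ast}$ limit of the periodic measures. Carrying this through uniformly over the pieces appearing in Zheng's classification, and confirming that every IRS is indeed reached in this way, is where I expect the real technical work to lie.
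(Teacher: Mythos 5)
Your overall strategy is the same as the paper's (amenability plus the IRS criterion, Zheng's classification, and finite models extracted from the LEF argument), but your implementation has a genuine gap at its core, namely the measure transfer. In Theorem \ref{ZhengThm} the nontrivial ergodic IRSs are pushforwards $\Stab_{\ast}(\nu_1\times\cdots\times\nu_k)$ for \emph{arbitrary} $T$-invariant ergodic measures $\nu_1,\ldots,\nu_k$, which may be pairwise distinct and need not be weak$^{\ast}$ limits of your periodic-orbit measures: a minimal subshift need not be uniquely ergodic. Replacing each $\nu_j$ by ``the uniform measure on the periodic orbit'' can only produce, in the limit, IRSs corresponding to tuples $(\nu^{\ast},\ldots,\nu^{\ast})$ where $\nu^{\ast}$ is whatever invariant measure your periodic measures converge to; it cannot reach a product of two distinct ergodic measures, nor a measure that is not a limit of the chosen periodic approximations (note that on a single periodic orbit the topological full group is the whole symmetric group, so uniform is the only invariant option). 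The paper's Proposition \ref{StabIRSPartCosof} avoids this by working not with periodic subshifts but with the Kakutani--Rokhlin partitions $\Xi_n$ from the Grigorchuk--Medynets LEF proof: the finite group $\Delta_n=\langle\phi_n(S)\rangle\leq\Sym(\Xi_n)$ preserves each tower, so \emph{any} invariant $\nu_j$ transfers to the measure giving the atom $T^iB_v^{(n)}$ mass $\nu_j(B_v^{(n)})$, invariance being automatic; and the locality statement you gesture at is made exact in Lemma \ref{BlockStabLem} ($g(x)=x$ iff $\phi_n(g)$ fixes the atom $[x]_n$, for $g\in B_S(n)$), yielding the equality $\nu_n(C_{r,W})=\mu(C_{r,W})$ for $n\geq r$ rather than an equidistribution estimate.

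Three further points. First, marked convergence of your $\Delta_n$ is not justified: a periodization of a long admissible word has ``seam'' windows that need not lie in the language of $(X,T)$, so relations of $\Gamma'$ may fail at seam points (indeed the block codes need not even define elements of the finite full group there); Grigorchuk--Medynets prove LEF via K--R partitions, not via periodic subshifts, so you cannot simply cite them for this step. Second, Zheng classifies the \emph{ergodic} IRSs; passing to general $\mu$ is not a matter of ``carrying through the pieces uniformly'' (the ergodic decomposition can involve a continuum of components) but of knowing that the partially cosofic IRSs form a closed convex set, which is Corollary \ref{IRSMainCoroll} and whose convexity step requires combining the two sequences of finite groups inside a fibre product. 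Third, your parenthetical claim that $\Gamma'$ is finitely presented is false and, within this paper, would be fatal: by Lemma \ref{FPStabLem} a finitely presented locally stable group is stable, whereas $\Gamma'$ is amenable and not residually finite, hence by Theorem \ref{ArzPauThm} not even weakly stable.
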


Theorem \ref{IntroMainThm} is then immediate, 
since the groups $\Gamma'$ apprearing in 
Theorem \ref{IntroFullGrpMainThm} are known to fail to be weakly stable, to be finitely generated, 
and to encompass 
a continuum of isomorphism-types of groups. 
The IRSs of such groups were classified by Zheng \cite{Zheng}. 
Theorem \ref{IntroFullGrpMainThm} is proven 
by showing that the IRSs arising in her classification satisfy 
the conditions of Theorem \ref{IntroAmenMainThm}, 
and using the breakthrough result 
of Juschenko-Monod that topological 
full groups are amenable \cite{JusMon}. 
To apply Theorem \ref{IntroAmenMainThm} 
to the IRSs described in Zheng's result, 
we need to produce suitable sequences of marked finite groups $\Delta_n$, and this will be achieved using the arguments of \cite{GrigMedy}, 
where it was proved that topological full groups are LEF, via the construction of 
partial actions on Kakutani-Rokhlin partitions. 

We have one other example of a finitely generated 
group which is locally stable but not weakly stable. 

\begin{thm}[Theorem \ref{AltEnrLocStabThm}] \label{IntroAltEnrThm}
Let $\mathcal{A}(\mathbb{Z})$ be the group of 
permutations of the set $\mathbb{Z}$ 
generated by all finitely supported even permutations 
and the image of the regular representation 
of the group $\mathbb{Z}$. 
Then $\mathcal{A}(\mathbb{Z})$ is a finitely generated 
group which is locally stable but not stable. 
\end{thm}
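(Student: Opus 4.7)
The plan is to deduce the statement from Theorem \ref{IntroAmenMainThm} after verifying the basic structural properties of $\Gamma := \mathcal{A}(\mathbb{Z})$. Writing $\sigma$ for the shift $n \mapsto n+1$ and $\pi$ for the $3$-cycle $(0\,1\,2)$, one has $\Gamma = \FAlt(\mathbb{Z}) \rtimes \mathbb{Z}$, and this is finitely generated by $\lbrace \sigma, \pi \rbrace$ because the conjugates $\sigma^k \pi \sigma^{-k} = (k, k+1, k+2)$ together generate $\FAlt(\mathbb{Z})$. Amenability of $\Gamma$ is immediate from the exact sequence $1 \to \FAlt(\mathbb{Z}) \to \Gamma \to \mathbb{Z} \to 1$, with the kernel locally finite and the quotient abelian; in particular $\Gamma$ is sofic. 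On the other hand $\Gamma$ is not residually finite, because $\FAlt(\mathbb{Z})$ is infinite and simple so must die in any finite quotient, forcing that quotient to factor through $\mathbb{Z}$ and hence kill every element of $\FAlt(\mathbb{Z})$. By the Glebsky--Rivera / Arzhantseva--Paunescu theorem that a sofic stable group is residually finite, $\Gamma$ is already known to fail stability.

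To obtain local stability via Theorem \ref{IntroAmenMainThm}, I would take as $2$-marked approximating groups $\Delta_n = \langle \sigma_n, \pi \rangle \leq \Sym(n)$, where $\sigma_n = (0\,1\,\cdots\,n-1)$. For $n$ large the conjugates $\sigma_n^k \pi \sigma_n^{-k}$ exhaust the consecutive $3$-cycles modulo $n$, which generate $\Alt(n)$, so each $\Delta_n$ equals $\Alt(n)$ or $\Sym(n)$. That $\Delta_n \to \Gamma$ in the marked-group topology is the usual LEF argument: any word in $\sigma$ and $\pi$ non-trivial in $\Gamma$ only moves a bounded window of integers, which eventually fits inside $\lbrace 0, \ldots, n-1 \rbrace$ with no wrap-around, so the finite and infinite computations agree.

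The heart of the proof is to show that every $\mu \in \IRS(\Gamma)$ arises as a weak$^*$-limit of a sequence $\nu_n \in \IRS(\Delta_n)$ inside $\IRS(\mathbb{F}_2)$. To organize this I would first push $\mu$ forward along $L \mapsto L \FAlt(\mathbb{Z}) / \FAlt(\mathbb{Z})$ to an IRS on $\mathbb{Z}$, which is concentrated on the countable set $\lbrace m \mathbb{Z} : m \geq 0 \rbrace$, and disintegrate $\mu$ accordingly; this reduces to the case that $\mu$-almost every $L$ projects to a fixed $m\mathbb{Z}$. Conditionally on this, the trace $L \cap \FAlt(\mathbb{Z})$ is an $\mathcal{A}(\mathbb{Z})$-invariant random subgroup of $\FAlt(\mathbb{Z})$ whose conjugation action factors through $\Sym(\mathbb{Z})$; in the style of known IRS classifications for $S_\infty$-type groups, one expects such an IRS to be encoded by an invariant random partition of $\mathbb{Z}$ together with an alternating-versus-symmetric choice on each block. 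To approximate on the finite side I would replace the partition of $\mathbb{Z}$ by its cyclic counterpart on $\lbrace 0, \ldots, n-1 \rbrace$, take the analogous alternating factor on each block, combine this with the subgroup $m\mathbb{Z}/n\mathbb{Z}$ along a subsequence of $n$ with $m \mid n$, and let $\nu_n$ be the stabilizer IRS in $\Delta_n$ of this random finite data. Convergence $\nu_n \to \mu$ in $\IRS(\mathbb{F}_2)$ is then checked on the countable basis of cylinders $\lbrace L : w \in L \rbrace$ for $w \in \mathbb{F}_2$, exploiting exchangeability of both the partition and the alternating factors.

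The principal obstacle is the IRS-side analysis just sketched: one must articulate a usable structural description for arbitrary IRSs of $\mathcal{A}(\mathbb{Z})$ and then match it with a carefully chosen finite-group construction so that word-by-word cylinder probabilities converge. The combinatorics are simpler than those underpinning Theorem \ref{IntroFullGrpMainThm}, which rests on Zheng's classification, and I would expect the required structural result either to be extractable from the literature on IRSs of $S_\infty$-type groups or to admit a direct ``Kakutani--Rokhlin''-style proof in the spirit of \cite{GrigMedy}; once in hand, the approximation step proceeds as in the topological full group case, and Theorem \ref{IntroAmenMainThm} delivers local stability of $\mathcal{A}(\mathbb{Z})$.
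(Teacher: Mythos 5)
Your first paragraph (finite generation by the shift and a $3$-cycle, amenability, failure of residual finiteness, hence failure of stability for a sofic group) is correct and agrees with the paper. For the main step, however, your proposal follows a route that the paper only sketches as an aside (Example \ref{AltEnrichIRSEx}), and as written it has a genuine gap: the decisive claim --- that every IRS of $\mathcal{A}(\mathbb{Z})$ is a weak$^{*}$ limit of IRSs of the finite marked groups $\Delta_n$ --- is not proved but deferred to a hoped-for structural classification (``one expects\dots'', ``I would expect the required structural result\dots to be extractable from the literature''). That classification does exist (Section 4 of \cite{LeLu}): the ergodic IRSs of $\mathcal{A}(\mathbb{Z})$ are either atomic on finite-index normal subgroups or are supported on subgroups of $\FAlt(\mathbb{Z})$ and arise, by a theorem of Vershik, as stabilizers of i.i.d.\ random colourings of $\mathbb{Z}$. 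This is not quite your guessed picture of an invariant random partition with an alternating-versus-symmetric choice on each block combined with an $m\mathbb{Z}$-part; in particular the ``mixed'' case you propose to approximate (nontrivial projection to $\mathbb{Z}$ together with a nontrivial proper trace in $\FAlt(\mathbb{Z})$) does not occur among ergodic IRSs. You also need a reduction from arbitrary IRSs to ergodic ones: your disintegration over the projection to $\mathbb{Z}$ does not accomplish this, whereas the paper handles it in Corollary \ref{IRSMainCoroll}, whose convexity step requires combining two approximating sequences of finite groups via a subdirect product. With these inputs supplied (the Levit--Lubotzky/Vershik classification, the cylinder-set convergence computation for colouring-stabilizer IRSs, and Corollary \ref{IRSMainCoroll}), your route does go through --- it is precisely the alternative argument indicated in Example \ref{AltEnrichIRSEx} --- but as submitted the central step is a plan rather than a proof.

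For comparison, the paper's actual proof of Theorem \ref{AltEnrLocStabThm} avoids all IRS analysis. It exhibits $(\mathcal{A}(\mathbb{Z}),\mathbf{S})$ as a limit in $\mathcal{G}_2$ of the marked B.H.\ Neumann groups $G(r_n)$ (diagonal products of alternating groups), which are stable by Levit--Lubotzky and which come with tail epimorphisms $\tau_n : G(r_n) \twoheadrightarrow \mathcal{A}(\mathbb{Z})$ (Propositions \ref{KAPaLiftProp}, \ref{AltEnriLEFProp} and \ref{AltEnriDLProp}); Proposition \ref{liftingalmosthomsprop} then converts stability of the approximating groups, together with the marked epimorphisms, into local stability of the limit. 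That argument is self-contained modulo the stability theorem of \cite{LeLu}, whereas your route additionally needs their IRS classification and the amenable IRS criterion of Theorem \ref{IRSMainThm}; if you wish to complete your version, cite the classification explicitly and carry out the convergence of cylinder probabilities for the colouring-stabilizer IRSs as in Example \ref{AltEnrichIRSEx}.
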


Although this latter result could also be viewed 
as an application of Theorem \ref{IntroAmenMainThm} 
(see Example \ref{AltEnrichIRSEx}), 
we shall in fact provide an alternative proof, 
using the fact that $\mathcal{A}(\mathbb{Z})$ 
may be obtained as the limit of a directed system 
of stable groups. 
The groups appearing in our directed system are drawn from a family constructed by B.H. Neumann \cite{Neum}; 
the fact of their stability is a result 
of Levit-Lubotzky \cite{LeLu}. 

In view of the relevance of local stability 
to the search for a non-sofic group, 
it is disappointing that all the new 
(that is, non-stable) examples of locally stable 
groups we produce are amenable (hence sofic), 
and indeed that amenability plays a key r\^{o}le 
in our arguments (via Theorem \ref{IntroAmenMainThm}). 
Nevertheless, it is intriguing to note that 
the topological full groups of $\mathbb{Z}$-actions 
studied in Theorem \ref{IntroFullGrpMainThm} 
have close relatives which are not known to be sofic. 
For instance Thompson's group $V$ is the 
topological full group of the action 
of a finitely generated group (namely of $V$ itself). 
We should note that, since $V$ is finitely presented, 
it is locally stable iff it is stable 
(see Lemma \ref{FPStabLem} below). 
Nevertheless, given the structural similarities between 
$V$ and the groups $\Gamma '$ 
arising in Theorem \ref{IntroFullGrpMainThm}, 
our results can be seen as evidence that 
$V$ is stable. 
If this were the case, then $V$ would be non-sofic. 

The paper is structured as follows. 
After fixing our notation and making some elementary observations about LEF groups, we characterize 
(weak) local stability, as defined above, 
in terms of local solutions to ``stability challenges''; 
stability properties of systems of equations, 
and lifting properties of homomorphisms 
to metric ultraproducts of finite symmetric groups. 
This is the subject of Subsection \ref{CharacSubsect}. 
In Subsection \ref{FirstPropSubsect} we show that 
(weak) stability and weak local stability 
are equivalent in the class of finitely presented groups, and that weak local stability and 
LEF are equivalent in the class of amenable groups. 
As a consequence, we exhibit an example 
of a weakly stable group which is not locally stable. 
In Section \ref{MarkedGrpsSect} we recall some 
basic material on the space of marked groups, 
to be used subsequently, 
and prove Theorem \ref{IntroAltEnrThm}. 
In Section \ref{IRSSect} we provide relevant background 
on invariant random subgroups, and prove Theorem 
\ref{IntroAmenMainThm}. 
In Section \ref{TFGSect} we discuss topological 
full groups, prove Theorem \ref{IntroFullGrpMainThm}, 
and deduce Theorem \ref{IntroMainThm}. 
We conclude with some open questions 
and suggested directions for future research. 

\section{Preliminaries}

\subsection{Notation}

In this paper, all group actions are on the left. 
For $\Gamma$ a group; $S \subseteq \Gamma$ 
a generating set and $n \in \mathbb{N}$, 
$B_S (n)$ denotes the closed ball of radius $n$ 
around the identity in the word-metric 
induced by $S$ on $\Gamma$. 
For $n$ a positive integer, 
let $[n]=\lbrace 1,\ldots,n\rbrace$ 
and let $\lBrack n \rBrack 
= \lbrace m \in \mathbb{Z} : \lvert m \rvert \leq n \rbrace$. 

\subsection{LEF groups}

The definition of LEF we have given in 
Definition \ref{SoficLEFDefn} above is non-standard. 
The class of LEF groups is more usually defined in 
terms of ``local embeddings'' into finite groups, 
as follows. 

\begin{defn}
Let $\Gamma$ and $\Delta$ be groups and let 
$A \subseteq \Gamma$. 
An injective function $\phi:A \rightarrow \Delta$ is a 
\emph{local embedding} if, for all $g,h \in A$, 
if $gh \in A$ then $\phi(gh)=\phi(g)\phi(h)$. 
\end{defn}

\begin{propn} \label{LEFStandardProp}
A countable group $\Gamma$ is LEF iff, 
for all finite $A \subseteq \Gamma$, 
there exists a finite group $Q$ and a 
local embedding $\phi : A \rightarrow Q$. 
\end{propn}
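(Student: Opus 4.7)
The plan is to verify both implications directly from the definitions, using the left regular representation as a bridge between local embeddings into arbitrary finite groups and local embeddings into symmetric groups.

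For the forward direction, given a separating partial homomorphism $\phi_n \colon \Gamma \to \Sym(k_n)$ and a finite subset $A \subseteq \Gamma$, I will show that the restriction $\phi_n|_A$ is itself a local embedding into the finite group $\Sym(k_n)$ for all sufficiently large $n$. Multiplicativity on the finitely many pairs $(g,h) \in A \times A$ with $gh \in A$ holds once $n$ exceeds the threshold given by the partial homomorphism condition for each such pair. Injectivity on $A$ follows by combining the partial homomorphism condition with the separating condition applied to each of the (finitely many) nontrivial elements $g^{-1}h$ with $g \neq h$ in $A$: eventually $\phi_n(g^{-1}h) = \phi_n(g)^{-1}\phi_n(h)$ is at Hamming distance close to $1$ from $\id_{k_n}$, hence distinct from it, forcing $\phi_n(g) \neq \phi_n(h)$.

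For the converse, I will use countability of $\Gamma$ to fix an exhaustive increasing sequence $A_n \nearrow \Gamma$ of finite subsets with $e \in A_1$. For each $n$, pick a local embedding $\psi_n \colon A_n \to Q_n$ into a finite group, set $k_n := |Q_n|$, embed $Q_n \hookrightarrow \Sym(k_n)$ via the left regular representation, and extend $\psi_n$ arbitrarily (say to $\id_{k_n}$) off $A_n$. For any fixed $g,h \in \Gamma$, eventually $g, h, gh \in A_n$, so the local embedding property yields $\psi_n(gh) = \psi_n(g)\psi_n(h)$, confirming the partial homomorphism condition. For $e \neq g \in \Gamma$, the identity $e \cdot e = e$ in $A_n$ forces $\psi_n(e) = e_{Q_n}$ via the local embedding property, whence injectivity gives $\psi_n(g) \neq e_{Q_n}$. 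Since any nontrivial element of $Q_n$ acts freely on $Q_n$ under the left regular representation, $d_{k_n}(\psi_n(g), \id_{k_n}) = 1$ for all large $n$, establishing the separating condition.

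No step presents a substantial obstacle. The only subtlety worth highlighting is the role of the left regular representation in the converse: it is precisely what upgrades ``$\psi_n(g) \neq e_{Q_n}$'' to the Hamming-metric condition in the definition of separating, which is why a local embedding into an arbitrary finite group suffices in place of one into a symmetric group.
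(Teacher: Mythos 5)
Your proof is correct and follows essentially the same route as the paper's: restricting the (separating) partial homomorphism to $A$ for large $n$ in one direction, and in the other exhausting $\Gamma$ by finite sets and composing local embeddings with the left regular representation to get a separating partial homomorphism. You simply spell out details the paper leaves implicit (injectivity via the separating condition, and the freeness of the regular action yielding Hamming distance $1$), which is fine.
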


\begin{proof}
Let $(\psi_n : \Gamma \rightarrow \Sym(k_n))_n$ 
be a partial homomorphism and let $A \subseteq \Gamma$ 
be finite. Then for all $n$ sufficiently large, 
the restriction of $\psi_n$ to $A$ 
is a local embedding. 

Conversely suppose that $(A_n)$ is an ascending 
sequence of finite subsets of $\Gamma$ 
with union $\Gamma$, let $Q_n$ be a finite group 
affording a local embedding 
$\phi_n : A_n \rightarrow Q_n$, 
and let $\rho_n : Q_n\rightarrow\Sym (\lvert Q_n \rvert)$ 
be the regular representation of $Q_n$. 
Then any function 
$\psi_n :\Gamma\rightarrow \Sym(\lvert Q_n\rvert)$ 
agreeing with $\rho_n \circ \phi_n$ on $A_n$ 
is a separating partial homomorphism. 
\end{proof}

\subsection{Characterizations of local stability} 
\label{CharacSubsect}

Recall that local stability was defined in 
Definition \ref{LocalStabDefn}. 
Throughout this Subsection, $\Gamma$ is 
a group, generated by a finite set 
$S = \lbrace s_1 , \ldots , s_d \rbrace$; 
$\mathbb{F}$ is the free group on basis $S$, 
and $\pi : \mathbb{F} \rightarrow \Gamma$ 
is the standard epimorphism. 
Where there is no risk of confusion, 
we shall not distinguish between the free basis 
$S$ for $\mathbb{F}$ 
and its image under $\pi$ in $\Gamma$. 

\subsubsection{Stability challenges} \label{StabilityChallSubsub}

\begin{defn}
Let $X$ and $Y$ be finite $\mathbb{F}$-sets, with $\lvert X\rvert =\rvert Y\rvert$. 
For $f: X \rightarrow Y$ a bijection, set: 
\begin{equation*}
\lVert f \rVert_{\gen} = 
\frac{1}{\lvert S \rvert} \sum_{s \in S} \Prob_{x \in X} 
\Big[f\big(s(x)\big)\neq s\big(f(x)\big)\Big]
\end{equation*}
(where $x \in X$ is uniformly distributed). 
For $X$ and $Y$ finite $\mathbb{F}$-sets with $\lvert X \rvert = \lvert Y \rvert$, 
set $d_{\gen}(X,Y)$ to be the minimal value of $\lVert f \rVert_{\gen}$, 
as $f$ ranges over all bijections from $X$ to $Y$. 
\end{defn}

\begin{defn}
A \emph{stability challenge} for $\Gamma$ is a 
sequence $(X_n)$ of finite $\mathbb{F}$-sets 
such that, for all $r \in \ker (\pi)$, 
\begin{equation*}
\Prob_{x \in X_n} [r(x)\neq x] \rightarrow 0 \text{ as }n \rightarrow \infty. 
\end{equation*}
The stability challenge $(X_n)$ is \emph{separating} if, 
for all $w \in \mathbb{F} \setminus \ker(\pi)$, 
\begin{equation*}
\Prob_{x \in X_n} [w(x)\neq x] \rightarrow 1 \text{ as }n \rightarrow \infty. 
\end{equation*}
We call a stability challenge $(Y_n)$ for $\Gamma$ a 
\emph{local $\Gamma$-set} if every $r \in \ker (\pi)$ 
satisfies the stronger condition: 
\begin{center}
$\lbrace y \in Y_n: r(y)\neq y\rbrace = \emptyset$ 
for all sufficiently large $n$. 
\end{center}
A \emph{local solution} to the stability challenge $(X_n)$ 
is a local $\Gamma$-set $(Y_n)$ such that 
$\lvert X_n \rvert = \lvert Y_n \rvert$ for all $n$, 
and $d_{\gen}(X_n,Y_n) \rightarrow 0$ as $n \rightarrow \infty$. 
\end{defn}

For $k \in \mathbb{N}$, the Hamming metric $d_k$ 
on $\Sym(k)$ was defined in Subsection \ref{TermSubsect}. 
For an arbitrary finite set $X$, 
the Hamming metric $d_X$ on $\Sym(X)$ 
is defined in a precisely analogous way. 
In \cite{BeLuTh} Definitions 3.11 and 7.3, 
stable groups 
are characterized in terms of ``solutions'' 
to stability challenges. 
In the same spirit, we have the following. 

\begin{propn} \label{StabChallProp}
The group $\Gamma$ is locally stable iff 
every stability challenge for $\Gamma$ has a local solution. 
Similarly, $\Gamma$ is weakly locally stable iff 
every separating stability challenge for $\Gamma$ 
has a local solution. 
\end{propn}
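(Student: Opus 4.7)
The plan is to set up a dictionary between sequences of functions $\phi_n : \Gamma \to \Sym(k_n)$ and sequences $(X_n)$ of finite $\mathbb{F}$-sets with $\lvert X_n \rvert = k_n$, under which almost-homomorphisms correspond to stability challenges, partial homomorphisms to local $\Gamma$-sets, Hamming closeness of the $\phi_n$ to vanishing of $d_{\gen}$, and the separating condition to its namesake. The proposition then falls out by inspection of the two sets of definitions.

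Fix once and for all a set-theoretic section $\sigma : \Gamma \to \mathbb{F}$ of $\pi$ with $\sigma(e) = e$ and $\sigma(s) = s$ for each $s \in S$. Given $(\phi_n)$, equip $X_n := [k_n]$ with the $\mathbb{F}$-set structure determined by $s \cdot x := \phi_n(s)(x)$, and write $\tilde\phi_n : \mathbb{F} \to \Sym(X_n)$ for the resulting homomorphism. The first key observation, proved by induction on word length using bi-invariance of $d_{k_n}$ together with the easy facts $\phi_n(e) \to \id$ and $\phi_n(s^{-1}) \approx \phi_n(s)^{-1}$ (both from applying the almost-homomorphism property to the pairs $(e,e)$ and $(s,s^{-1})$), is that for each fixed $w \in \mathbb{F}$ one has $d_{k_n}(\tilde\phi_n(w), \phi_n(\pi(w))) \to 0$ when $(\phi_n)$ is an almost-homomorphism, and more sharply $\tilde\psi_n(w) = \psi_n(\pi(w))$ for all $n$ large enough when $(\psi_n)$ is a partial homomorphism. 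Specialising to $w \in \ker(\pi)$ shows that $(X_n)$ is a stability challenge in the first case and a local $\Gamma$-set in the second, and the separating condition transfers since $d_{k_n}(\tilde\phi_n(w), \id)$ and $d_{k_n}(\phi_n(\pi(w)), \id)$ are asymptotically equal. Conversely, given an arbitrary $\mathbb{F}$-set sequence $(X_n)$, the rule $\phi_n(g) := \tilde\phi_n(\sigma(g))$ produces an almost-homomorphism when $(X_n)$ is a stability challenge (since the ``error word'' $\sigma(g)\sigma(h)\sigma(gh)^{-1}$ lies in $\ker(\pi)$ and so acts almost trivially on $X_n$), and a partial homomorphism when $(X_n)$ is a local $\Gamma$-set (where that error word acts trivially for $n$ large).

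Next, given $(X_n)$ and $(Y_n)$ of the same size with corresponding maps $(\phi_n), (\psi_n)$, the condition $d_{k_n}(\phi_n(g), \psi_n(g)) \to 0$ for every $g \in \Gamma$ is equivalent to $d_{\gen}(X_n, Y_n) \to 0$: one direction is immediate by taking $f = \id$ in the definition of $d_{\gen}$ and restricting to $g \in S$; for the converse, bijections $f_n : X_n \to Y_n$ witnessing small $d_{\gen}$ allow one to relabel $Y_n$ (preserving its local $\Gamma$-set structure, since relabelling is purely set-theoretic) so that $d_{k_n}(\phi_n(s), \psi_n(s)) \to 0$ for every $s \in S$, and then a telescoping argument over a fixed word expression for $g$, using bi-invariance of $d_{k_n}$ together with the respective multiplication properties of $(\phi_n)$ and $(\psi_n)$, propagates closeness from generators to arbitrary group elements.

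Combining the two directions of the dictionary yields the proposition in both the locally stable and weakly locally stable cases. I expect the main technical nuisance to be this last telescoping step: one must carefully track that $g$ is fixed before letting $n \to \infty$, so that the bounded number of approximation errors incurred in expanding $g$ as a word in $S$ all vanish in the limit; the separating condition adds no extra difficulty, since it was already shown above to transfer across the dictionary.
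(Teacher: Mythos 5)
Your proposal is correct and is essentially the paper's own argument: the same dictionary via a fixed section $w(\cdot)$ of $\pi$, the same back-and-forth constructions, and the identity bijection/relabelling to compare $d_{\gen}$ with generator-wise Hamming closeness, with the separating condition transferring exactly as you say. The only difference is presentational: your ``key observation'' ($d_{k_n}(\tilde\phi_n(w),\phi_n(\pi(w)))\to 0$, and equality for large $n$ in the partial-homomorphism case) spells out the word-length/telescoping step that the paper leaves implicit as ``easily seen''.
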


\begin{proof}
Suppose that $\Gamma$ is locally stable, 
and let $(X_n)$ be a stability challenge for $\Gamma$, 
inducing homomorphisms $\tilde{\phi}_n : \mathbb{F} \rightarrow \Sym(X_n)$. 
Then for every $r \in \ker(\pi)$, 
\linebreak$d_{X_n} (\tilde{\phi}_n(r),\id_{X_n}) \rightarrow 0$ as $n \rightarrow \infty$. 
For each $g \in \Gamma$, pick some $w(g) \in \pi ^{-1} (g)$, 
with $w (e) = e$ and $w(s_i)=s_i$. 
Define $\phi_n : \Gamma \rightarrow \Sym(X_n)$ by 
$\phi_n (g) = \tilde{\phi}_n (w(g))$. Then for $g,h \in \Gamma$, 
$w(g)w(h)w(gh)^{-1} \in \ker(\pi)$, so: 
\begin{align*}
d_{X_n} \big( \phi_n (g) \phi_n (h),\phi_n (gh) \big)
& = d_{X_n} \big( \phi_n (g) \phi_n (h)\phi_n (gh)^{-1},\id_{X_n} \big) \\
& = d_{X_n} \big( \tilde{\phi}_n \big( w(g)w(h)w(gh)^{-1} \big),\id_{X_n} \big) \\
& \rightarrow 0 \text{ as }n \rightarrow \infty
\end{align*}
Thus $(\phi_n)$ is an almost-homomorphism; by local stability 
there is a partial homomorphism $(\psi_n)$ 
satisfying the conclusion of Definition \ref{LocalStabDefn}. 
In particular $\psi_n (e) = \id_{X_n}$ for all $n$ sufficiently large. 
Define an action of $\mathbb{F}$ on $Y_n = X_n$ by: 
\begin{center}
$w \cdot y = (\psi_n \circ \pi)(w)[y]$ 
for all $w \in \mathbb{F}$ and $y \in Y_n$. 
\end{center}
Then each $r \in \ker (\pi)$ acts trivially on $Y_n$ 
for all sufficiently large $n$. 
Thus let $f_n = \id : X_n \rightarrow Y_n$, 
so that if $x \in X_n$ and $s \in S$ are such that 
$f_n (s \cdot x) \neq s \cdot f_n (x)$, 
then $\phi(s)[x]\neq \psi(s)[x]$. 
Thus $d_{\gen}(X_n,Y_n) \leq \lVert f_n \rVert_{\gen} \rightarrow 0$ 
as $n \rightarrow 0$. 

Conversely suppose that every stability challenge for $\Gamma$ has a local solution, 
and let $\phi_n : \Gamma \rightarrow \Sym(X_n)$ be an almost-homomorphism, 
so that $X_n$ is an $\mathbb{F}$-set via 
$w \cdot x = (\phi_n \circ \pi)(w)[x]$. 
Then $(X_n)$ is easily seen to be a stability challenge for $\Gamma$. 
Let $(Y_n)$ be a local $\Gamma$-set such that 
there exists a sequence of bijections 
$(f_n : X_n \rightarrow Y_n)$ with $\lVert f_n \rVert_{\gen} \rightarrow 0$ 
as $n \rightarrow \infty$. 
Define a function 
$\psi_n : \Gamma \rightarrow \Sym(X_n)$ by: 
\begin{equation*}
\psi_n (g)(x) = f_n ^{-1} (w(g) \cdot f_n (x))
\end{equation*}
(it is easily seen that each $\psi_n (g)$ is bijective). 
Then for $g,h \in \Gamma$, 
$w(g)w(h)w(gh)^{-1} \in \ker (\pi)$, 
and since $(Y_n)$ a local solution, 
for all $n$ sufficiently large, 
$w(g)w(h)w(gh)^{-1} \cdot y = y$ for all $y \in Y_n$, 
from which it easily follows that 
$\psi_n(g) \psi_n(h) = \psi_n(gh)$. 
Hence $(\psi_n)_n$ is a partial homomorphism. 
Moreover, the fact that 
$\lVert f_n \rVert_{\gen} \rightarrow 0$ 
implies that for every fixed $g \in \Gamma$, 
$d_{k_n} (\phi_n(g),\psi_n(g))\rightarrow 0$. 

The proof of the criterion for weak local stability 
is essentially identical: it suffices to note 
(in the forward direction) 
that if we start with a separating stability challenge 
$(X_n)$, then the almost-homomorphism $(\phi_n)$ we constructed is separating also, 
and (in the reverse direction) that 
if we start with a separating almost-homomorphism 
$(\phi_n)$, 
then the stability challenge $(X_n)$ 
we constructed is separating. 
\end{proof}

\subsubsection{Locally stable systems of equations}

For $\overline{\sigma} \in \Sym(k)^S$, 
let $\pi_{\overline{\sigma}} : \mathbb{F} \rightarrow \Sym(k)$ be the (unique) homomorphism extending 
$\overline{\sigma}$, and for $w \in \mathbb{F}$, 
write $w(\overline{\sigma})=\pi_{\overline{\sigma}}(w)$ 
(the evaluation of $w$ at $\overline{\sigma}$). 
For $R \subseteq \mathbb{F}$, we shall say that 
$\overline{\sigma}$ is a \emph{solution} to 
the system of equations $\lbrace r=e\rbrace_{r \in R}$ 
if $r(\overline{\sigma}) = \id_k$ for all $r \in R$. 
For $\epsilon > 0$, $\overline{\sigma}$ is an 
\emph{$\epsilon$-almost-solution} to 
$\lbrace r=e\rbrace_{r \in R}$ if, 
for all $r \in R$, 
$d_k (r(\overline{\sigma}),\id_k) < \epsilon$ 
(for short, we may also refer to an ``$\epsilon$-almost solution for $R$'' or a ``solution for $R$'', 
with the same meaning). 
We shall say that 
$\overline{\sigma},\overline{\tau} \in \Sym(k)^S$ 
are $\epsilon$-close if: 
\begin{equation*}
\sum_{s \in S} d_k \big( \overline{\sigma}(s),\overline{\tau}(s) \big) < \epsilon
\end{equation*}

\begin{defn}
For $R \subseteq \mathbb{F}$, 
we say that the system of equations 
$\lbrace r = e \rbrace_{r \in R}$ 
is \emph{locally stable in permutations} if, 
for all $\epsilon > 0$ and $R_0 \subseteq R$ finite, 
there exists $\delta > 0$ and $R_1 \subseteq R$ finite 
such that, for every $k \in \mathbb{N}$ and every 
$\overline{\sigma} \in \Sym(k) ^S$, 
if $\overline{\sigma}$ is a $\delta$-almost-solution 
to $\lbrace r = e \rbrace_{r \in R_1}$, 
then there is a solution 
$\overline{\tau} \in \Sym(k) ^S$ to 
$\lbrace r = e \rbrace_{r \in R_0}$ 
such that $\overline{\sigma}$ and $\overline{\tau}$ 
are $\epsilon$-close in $\Sym(k)$. 
\end{defn}

This definition is a generalization 
of the concept of a ``stable system of equations'' 
first introduced in \cite{GlRi}. 
In Lemma 3.12 of \cite{BeLuTh}, 
it is shown that the group $\Gamma$ is stable 
iff the system of equations 
$\lbrace r = e : r \in \ker (\pi) \rbrace$  is stable. 
The analogous statement for local stability 
is as follows. 

\begin{propn} \label{LocStabEqnProp}
The group $\Gamma$ is locally stable 
iff the system of equations 
$\lbrace r = e : r \in \ker (\pi) \rbrace$ 
is locally stable in permutations. 
\end{propn}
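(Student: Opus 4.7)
The plan is to mimic the argument for Lemma 3.12 of \cite{BeLuTh}, substituting partial homomorphisms for homomorphisms throughout. In both directions the idea is that a tuple $\overline{\sigma} \in \Sym(k)^S$ is essentially the same data as a function $\Gamma \rightarrow \Sym(k)$, once one has fixed a set-theoretic section $w : \Gamma \rightarrow \mathbb{F}$ of $\pi$ with $w(e) = e$ and $w(s_i) = s_i$: given $\overline{\sigma}$ one sets $\phi(g) = w(g)(\overline{\sigma})$, and given $\phi$ one sets $\overline{\sigma}(s_i) = \phi(s_i)$. The fact that $\phi$ is an almost-homomorphism translates into the statement that $\overline{\sigma}$ is asymptotically a solution to every $r \in \ker(\pi)$, and vice versa; similarly, a partial homomorphism corresponds (asymptotically) to a genuine solution to every $r \in \ker(\pi)$.

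For the forward direction I would argue by contradiction. If the system $\{r = e : r \in \ker(\pi)\}$ fails to be locally stable in permutations, witness this with some $\epsilon > 0$ and finite $R_0 \subseteq \ker(\pi)$; then enumerate $\ker(\pi) = \{r_1, r_2, \ldots\}$ and, taking $R_n = \{r_1, \ldots, r_n\}$ and $\delta_n = 1/n$, pick $\overline{\sigma}_n \in \Sym(k_n)^S$ which is a $\delta_n$-almost-solution to $R_n$ but is not $\epsilon$-close to any solution of $R_0$. Setting $\phi_n(g) = w(g)(\overline{\sigma}_n)$ produces an almost-homomorphism: for each fixed $g,h \in \Gamma$ the element $w(g)w(h)w(gh)^{-1}$ lies in $\ker(\pi)$, hence in $R_n$ for all large $n$, so $d_{k_n}(\phi_n(g)\phi_n(h), \phi_n(gh)) \rightarrow 0$. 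Local stability of $\Gamma$ furnishes a partial homomorphism $\psi_n$ close to $\phi_n$; the tuple $\overline{\tau}_n(s_i) = \psi_n(s_i)$ is then a genuine solution to $R_0$ for large $n$ (by the partial homomorphism property) and $\epsilon$-close to $\overline{\sigma}_n$, contradicting the choice of $\overline{\sigma}_n$.

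For the reverse direction, given an almost-homomorphism $\phi_n : \Gamma \rightarrow \Sym(k_n)$, define $\overline{\sigma}_n(s_i) = \phi_n(s_i)$. An easy iteration of the almost-homomorphism property, together with $\phi_n(e) \rightarrow \id_{k_n}$, shows that for each fixed $r \in \ker(\pi)$, $d_{k_n}(r(\overline{\sigma}_n), \id_{k_n}) \rightarrow 0$. Now apply local stability of the system with $R_{0,k} = \{r_1, \ldots, r_k\}$ and $\epsilon_k = 1/k$ to obtain $\delta_k > 0$ and finite $R_{1,k} \subseteq \ker(\pi)$; let $k(n)$ be the largest $k \leq n$ such that $\overline{\sigma}_n$ is a $\delta_k$-almost-solution to $R_{1,k}$. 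By the preceding observation $k(n) \rightarrow \infty$, so we may select $\overline{\tau}_n \in \Sym(k_n)^S$ which is an honest solution to $R_{0,k(n)}$ and is $\epsilon_{k(n)}$-close to $\overline{\sigma}_n$. Set $\psi_n(g) = w(g)(\overline{\tau}_n)$. Then $\psi_n$ is a partial homomorphism, since for each fixed $g,h$ the kernel element $w(g)w(h)w(gh)^{-1}$ eventually belongs to $R_{0,k(n)}$ and thus acts trivially on $\overline{\tau}_n$; and $d_{k_n}(\phi_n(g), \psi_n(g)) \rightarrow 0$ for each fixed $g$, using the triangle inequality $d_{k_n}(w(g)(\overline{\sigma}_n), w(g)(\overline{\tau}_n)) \leq |w(g)| \cdot \max_{s \in S} d_{k_n}(\overline{\sigma}_n(s), \overline{\tau}_n(s))$ together with the fact that $\phi_n(g)$ is asymptotically equal to $w(g)(\overline{\sigma}_n)$.

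The main obstacle, and the only substantive bookkeeping, is in the reverse direction: one must choose the diagonal index $k(n)$ so that it genuinely diverges (giving that the $\psi_n$ are eventually correct on any fixed pair $(g,h)$) while simultaneously keeping $\overline{\tau}_n$ close enough to $\overline{\sigma}_n$ to recover the approximation $\phi_n \approx \psi_n$. The rest is routine manipulation of the Hamming metric, together with the elementary fact that $\phi_n(s^{-1}) \rightarrow \phi_n(s)^{-1}$ when $(\phi_n)$ is an almost-homomorphism.
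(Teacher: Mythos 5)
Your argument is correct, but it is organized differently from the paper's. The paper deduces Proposition \ref{LocStabEqnProp} from the stability-challenge characterization (Proposition \ref{StabChallProp}): tuples $\overline{\sigma}^{(n)} \in \Sym(k_n)^S$ are packaged as finite $\mathbb{F}$-sets $X_n$, almost-solutions to larger and larger portions of $\ker(\pi)$ become stability challenges, genuine solutions become local $\Gamma$-sets, and the only new work is the diagonal bookkeeping (the paper's $(\delta_m, a_m, N_m)$, which plays exactly the role of your $k(n)$). You instead argue directly from Definition \ref{LocalStabDefn}, mimicking Lemma 3.12 of \cite{BeLuTh}: in doing so you re-derive, inside this proof, the translation between tuples and functions $\Gamma \to \Sym(k_n)$ that the paper has already isolated in Proposition \ref{StabChallProp} (the set-theoretic section $w$ of $\pi$ with $w(e)=e$, $w(s_i)=s_i$, bi-invariance of the Hamming metric so that $d_{k_n}\big(u(\overline{\sigma}),v(\overline{\sigma})\big)=d_{k_n}\big((uv^{-1})(\overline{\sigma}),\id_{k_n}\big)$, and the word-length bound $d_{k_n}\big(w(\overline{\sigma}),w(\overline{\tau})\big)\leq \lvert w\rvert \max_s d_{k_n}\big(\overline{\sigma}(s),\overline{\tau}(s)\big)$). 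Your route is self-contained and arguably closer to the source in \cite{BeLuTh}; the paper's route buys economy, since Proposition \ref{StabChallProp} is needed anyway and the same template then also yields Propositions \ref{WeakLocStabEqnProp} and \ref{UltraProdLiftProp} with minimal changes. The details you flag as the ``substantive bookkeeping'' are indeed the crux and you handle them correctly: $k(n)$ is defined for all sufficiently large $n$ and tends to infinity because $\ker(\pi)$ is exhausted by the finite sets $R_{1,k}$ and $d_{k_n}\big(r(\overline{\sigma}_n),\id_{k_n}\big)\to 0$ for each fixed $r$; in the forward direction one also needs the (routine) observations that $\psi_n(e)=\id$ and $\psi_n(s^{-1})=\psi_n(s)^{-1}$ for large $n$ before concluding $r(\overline{\tau}_n)=\id$ for $r\in R_0$, which your parenthetical ``by the partial homomorphism property'' correctly compresses.
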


\begin{proof}
We use the characterization of local stability 
for groups in terms of local solutions to 
stability challenges from 
\S \ref{StabilityChallSubsub} above. 
Suppose first that the system 
$\lbrace r = e \rbrace_{r \in \ker (\pi)}$ is not 
locally stable in permutations. 
Then there exists $\epsilon > 0$ and 
$R \subseteq \ker(\pi)$ finite such that 
for all $n \in \mathbb{N}$, 
there exists $k_n \in \mathbb{N}$ and 
a $(1/n)$-almost-solution 
$\overline{\sigma}^{(n)} \in \Sym(k_n)^S$ 
to $\ker (\pi) \cap B_S (n)$, which is not 
$\epsilon$-close to any solution to $\lbrace r = e \rbrace_{r \in R}$. 
Let $X_n = [k_n]$ be an $\mathbb{F}$-set, 
via $w \cdot j = w (\overline{\sigma}^{(n)}) (j)$. 
Then $(X_n)$ is a stability challenge for $\Gamma$. 
If $(Y_n)$ is a local solution, 
and $f_n :X_n \rightarrow Y_n$ are bijections 
satisfying $\lVert f_n \rVert_n \rightarrow 0$ 
as $n \rightarrow \infty$, then, 
defining $\overline{\tau}^{(n)} \in \Sym(k_n)^S$ 
by $\overline{\tau}^{(n)}(s_i)(j)=f_n ^{-1}(s_i \cdot f_n (j))$, we have that for all $n$ sufficiently 
large, $\overline{\tau}^{(n)}$ is a solution 
for $\lbrace r = e \rbrace_{r \in R}$ 
which is $\epsilon$-close to $\overline{\sigma}^{(n)}$, 
contradiction. 

Conversely suppose that 
$\lbrace r = e \rbrace_{r \in \ker (\pi)}$ is 
locally stable in permutations. 
For each $m \in \mathbb{N}$, let $\delta_m > 0$ 
and $a_m \in \mathbb{N}$ be such that 
$\delta_m \rightarrow 0$; 
$(a_m)$ is an increasing sequence and every 
$\delta_m$-almost-solution to 
$\ker(\pi) \cap B_S (a_m)$ is $(1/m)$-close 
to a solution for $\ker(\pi) \cap B_S (m)$. 
Let $(X_n)$ be a stability challenge for $\Gamma$. 
Let $k_n = \lvert X_n \rvert$ and fix a bijection 
$g_n : [k_n] \rightarrow X_n$. 
Define $\overline{\sigma}^{(n)} \in \Sym(k_n)^S$ 
by $\overline{\sigma}^{(n)}(s_i)(j) 
= g_n ^{-1} (s_i \cdot g_n (j))$. 
Then there exists a strictly increasing sequence $(N_m)$ 
such that for all $n \geq N_m$, 
$\overline{\sigma}^{(n)}$ is a $\delta_m$-solution 
to $\ker(\pi) \cap B_S (a_m)$. 
For $N_m \leq n < N_{m+1}$, 
let $\overline{\tau}^{(n)}$ be a solution to 
$\ker(\pi) \cap B_S (m)$ which is $(1/m)$-close 
to $\overline{\sigma}^{(n)}$. 
Define $Y_n = [k_n]$, which is an $\mathbb{F}$-set 
via $w \cdot y = w (\overline{\tau}^{(n)})(y)$. 
Then $(Y_n)$ is a local $\Gamma$-set, 
and the bijections 
$f_n = g_n ^{-1} : X_n \rightarrow Y_n$ 
witness that $(Y_n)$ is a local solution to 
the stability challenge $(X_n)$. 
\end{proof}

\begin{rmrk}
\normalfont
It follows from Proposition \ref{LocStabEqnProp} 
that for $R \subseteq \mathbb{F}$, 
whether or not the system $\lbrace r=e : r\in R \rbrace$ 
is locally stable in permutations 
depends only on the isomorphism-type of 
the group presented by $\langle S | R \rangle$. 
The corresponding statement for stability 
follows similarly from Lemma 3.12 of \cite{BeLuTh}. 
The special case of finitely presented groups 
appeared already in \cite{ArPa}: as is noted in that paper, 
stability of a system of relations 
is unaffected by applying Tietze moves to the system. 
\end{rmrk}

We can likewise capture weak local stability in terms 
of solutions to equations. 
For the sake of simplicity, 
we restrict our attention to stability of families of 
equations constituting a normal subgroup of 
$\mathbb{F}$. 

\begin{defn}
For $R \subseteq\mathbb{F} \setminus\lbrace e\rbrace$, 
and $\delta > 0$, 
$\overline{\sigma} \in \Sym(k)^S$ 
is \emph{$(1-\delta)-$separating} for $R$ if, 
for all $r \in R$, 
\begin{center}
$d_k \big( r(\overline{\sigma}),\id_k\big) > 1-\delta$. 
\end{center}
For $N \vartriangleleft \mathbb{F}$, 
the system $\lbrace r = e : r \in N \rbrace$ is 
\emph{weakly locally stable in permutations} if, 
for all $\epsilon > 0$ and $R_0 \subseteq N$ finite, 
there exist $\delta > 0$ and finite subsets 
$R_1 \subseteq N$ and 
$R_1 ' \subseteq \mathbb{F} \setminus N$ 
such that every $\delta$-almost-solution 
to $\lbrace r=e : r \in R_1 \rbrace$, 
which is $(1-\delta)$-separating for $R_1 '$, 
is $\epsilon$-close to a solution 
for $\lbrace r=e : r \in R_0 \rbrace$. 
\end{defn}

\begin{propn} \label{WeakLocStabEqnProp}
The group $\Gamma$ is weakly locally stable 
iff the system of equations 
$\lbrace r = e : r \in \ker (\pi) \rbrace$ 
is locally stable in permutations. 
\end{propn}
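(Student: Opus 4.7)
The plan is to adapt the strategy of Proposition \ref{LocStabEqnProp}, now using the characterization of \emph{weak} local stability via \emph{separating} stability challenges in Proposition \ref{StabChallProp}. Relative to Proposition \ref{LocStabEqnProp}, the essential new ingredient is that the separating condition must be tracked on both sides of the equivalence: on the group side via the separating requirement built into the definition of weak local stability, and on the equation side via the auxiliary finite set $R_1' \subseteq \mathbb{F} \setminus \ker(\pi)$ on which $\delta$-almost-solutions are required to be $(1-\delta)$-separating. (In particular, the relevant equation-level notion here is that introduced in the preceding definition, i.e.\ including the set $R_1'$; dropping this separating data on the equation side would collapse the equivalence to that of Proposition \ref{LocStabEqnProp} and hence identify weak local stability with local stability for $\Gamma$.)

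For the forward direction, I assume the system satisfies the equation-level stability property, and let $(\phi_n : \Gamma \rightarrow \Sym(k_n))$ be a separating almost-homomorphism. The induced $\mathbb{F}$-actions make $X_n = [k_n]$ into a separating stability challenge. I apply the equation-level hypothesis with $\epsilon_m = 1/m$ and $R_{0,m} = \ker(\pi) \cap B_S(m)$ to obtain parameters $\delta_m > 0$, $R_{1,m} \subseteq \ker(\pi)$ finite and $R_{1,m}' \subseteq \mathbb{F} \setminus \ker(\pi)$ finite. The tuples $\overline{\sigma}^{(n)} \in \Sym(k_n)^S$ coming from $\phi_n$ are eventually $\delta_m$-almost-solutions to $R_{1,m}$ (since $(X_n)$ is a stability challenge) and $(1-\delta_m)$-separating for $R_{1,m}'$ (since $(X_n)$ is separating), so for $n$ sufficiently large they are $\epsilon_m$-close to solutions $\overline{\tau}^{(n)}$ of $R_{0,m}$. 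A diagonal choice yields a local $\Gamma$-set $(Y_n)$ with identifying bijections $f_n = \id : X_n \to Y_n$ satisfying $\lVert f_n \rVert_{\gen} \rightarrow 0$; by Proposition \ref{StabChallProp}, $\Gamma$ is weakly locally stable.

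Conversely, I assume $\Gamma$ is weakly locally stable and suppose, for contradiction, that the system fails the equation-level stability. I fix $\epsilon > 0$ and a finite set $R_0 \subseteq \ker(\pi)$ witnessing the failure. Taking $\delta = 1/n$, $R_1 = \ker(\pi) \cap B_S(n)$, and $R_1' = (\mathbb{F} \setminus \ker(\pi)) \cap B_S(n)$, I extract tuples $\overline{\sigma}^{(n)} \in \Sym(k_n)^S$ that are $(1/n)$-almost-solutions to $R_1$ and $(1-1/n)$-separating for $R_1'$, yet not $\epsilon$-close to any solution of $R_0$. These turn $X_n = [k_n]$ into a separating stability challenge, so Proposition \ref{StabChallProp} furnishes a local solution $(Y_n)$; converting the witnessing bijections back to tuples $\overline{\tau}^{(n)} \in \Sym(k_n)^S$ via $\overline{\tau}^{(n)}(s_i)(j) = f_n^{-1}(s_i \cdot f_n(j))$ produces, for $n$ sufficiently large, an exact solution of $R_0$ that is $\epsilon$-close to $\overline{\sigma}^{(n)}$, contradicting the choice of $R_0$.

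The main obstacle is bookkeeping: one must match the parameters $(\delta_m, \epsilon_m, R_{0,m}, R_{1,m}, R_{1,m}')$ arising from the equation-level definition against the two rates of convergence (``approximate-solution'' and ``separation'') appearing in the definition of a separating stability challenge, ensuring the diagonal extraction is consistent. Once one unwinds the correspondence between tuples $\overline{\sigma}^{(n)} \in \Sym(k_n)^S$ and $\mathbb{F}$-actions on $[k_n]$, together with the precise relation between $\lVert f_n \rVert_{\gen}$ and $\sum_s d_{k_n}(\overline{\sigma}^{(n)}(s), \overline{\tau}^{(n)}(s))$, the remaining estimates are mechanical, exactly as in the proof of Proposition \ref{LocStabEqnProp}.
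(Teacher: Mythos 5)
Your proposal is correct and follows essentially the same route as the paper, which likewise just reruns the argument of Proposition \ref{LocStabEqnProp} while tracking the separating condition on both sides (via separating stability challenges on the group side and the $(1-\delta)$-separating requirement on the finite set $R_1'$ on the equation side). You also rightly read the statement as asserting equivalence with \emph{weak} local stability in permutations --- the omission of ``weakly'' on the right-hand side is a typo in the paper, as its own proof confirms.
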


\begin{proof}
This is much the same as the proof of Proposition 
\ref{LocStabEqnProp}. 
For the ``only if'' direction, 
we may assume that the tuple 
$\overline{\sigma}^{(n)} \in \Sym(k_n)^S$ 
described in the proof of 
Proposition \ref{LocStabEqnProp} 
is $(1-1/n)$-separating for $B_S(n)\setminus\ker(\pi)$. 
For the ``if'' direction, 
we choose $\delta_m$ and $a_m$ such that 
every $\delta_m$-almost-solution to 
$\ker(\pi) \cap B_S (a_m)$, 
which is $(1-\delta_m)$-separating for 
$B_S (a_m) \setminus \ker(\pi)$, 
is $(1/m)$-close 
to a solution for $\ker(\pi) \cap B_S (m)$, 
and argue as in the proof of Proposition 
\ref{LocStabEqnProp}. 
\end{proof}

\subsubsection{Liftings properties of homomorphisms to ultraproducts}

Let $\mathbf{k} = (k_n)_n$ be an increasing sequence of positive integers. Let: 
\begin{equation*}
G_{\mathbf{k}} = \prod_{n} \Sym(k_n)
\end{equation*}
be the Cartesian product of the finite groups $\Sym(k_n)$. 
Let $\mathcal{U}$ be a non-principal ultrafilter on $\mathbb{N}$. 
Define the normal subgroups: 
\begin{align*}
N_{\mathcal{A},\mathbf{k}} &
= \big\lbrace (\sigma_n)\in G_{\mathbf{k}}:
\lbrace n\in \mathbb{N}:\sigma_n=\id_{k_n}\rbrace\in\mathcal{U} \big\rbrace \\
N_{\mathcal{M},\mathbf{k}} &
= \big\lbrace (\sigma_n)\in G_{\mathbf{k}}:
\forall\epsilon>0, \lbrace n \in \mathbb{N} : d_{k_n}(\sigma_n,\id_{k_n})<\epsilon \rbrace \in\mathcal{U} \big\rbrace
\end{align*}
of $G_{\mathbf{k}}$ 
(so that $N_{\mathcal{A},\mathbf{k}} \leq N_{\mathcal{M},\mathbf{k}}$) 
and define $G_{\mathcal{A},\mathbf{k}} = G_{\mathbf{k}}/N_{\mathcal{A},\mathbf{k}}$, 
$G_{\mathcal{M},\mathbf{k}} = G/N_{\mathcal{M},\mathbf{k}}$ 
(called, respectively, the \emph{algebraic ultraproduct} 
and \emph{metric ultraproduct} of the $\Sym(k_n)$). 
Let $\pi_{\mathcal{A}} : G_{\mathbf{k}} \twoheadrightarrow G_{\mathcal{A},\mathbf{k}}$, 
$\pi_{\mathcal{M}} : G_{\mathbf{k}} \twoheadrightarrow G_{\mathcal{M},\mathbf{k}}$ 
and $\pi_{\mathcal{M},\mathcal{A}} : G_{\mathcal{A},\mathbf{k}} \twoheadrightarrow G_{\mathcal{M},\mathbf{k}}$ 
be the natural epimorphisms. 
For any $(\sigma_n),(\tau_n) \in G_{\mathbf{k}}$, 
there is a well-defined limit 
$\tilde{d}_{\mathcal{M},\mathbf{k}} \big( (\sigma_n),(\tau_n) \big) = 
\lim_{n \rightarrow \mathcal{U}} d_{k_n} (\sigma_n,\tau_n) \in [0,1]$ of $d_{k_n} (\sigma_n,\tau_n)$ 
along $\mathcal{U}$. Then $\tilde{d}_{\mathcal{M},\mathbf{k}}$ 
is a pseudometric on $G_{\mathbf{k}}$, 
which descends to a well-defined metric 
$d_{\mathcal{M},\mathbf{k}}$ on 
$G_{\mathcal{M},\mathbf{k}}$. 

\begin{defn}
For $\Gamma$ a group, a \emph{sofic representation} 
of $\Gamma$ is a homomorphism 
$\Phi :\Gamma \rightarrow G_{\mathcal{M},\mathbf{k}}$ 
such that, for all $e \neq g \in \Gamma$, 
$d_{\mathcal{M},\mathbf{k}}\big(\Phi(g),e\big)=1$. 
\end{defn}

A countable group $\Gamma$ is sofic iff it admits 
a sofic representation (see \cite{EleSzaHyp} 
Section 2 for a proof). 
There is also a characterization of LEF in terms of 
embeddings into algebraic ultraproducts; 
see \cite{CooChSi} Section 7.2 
and the references therein. 
We include a proof of the exact formulation we need 
for the reader's convenience. 

\begin{lem} \label{LEFUltraprodLem}
A countable group $\Gamma$ is LEF iff 
there exists an increasing sequence 
$\mathbf{k} = (k_n)_n$ and a homomorphism 
$\Phi : \Gamma \rightarrow G_{\mathcal{A},\mathbf{k}}$ 
such that $\pi_{\mathcal{M},\mathcal{A}} \circ \Phi$ 
is a sofic representation. 
\end{lem}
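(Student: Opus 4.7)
I would treat the two implications separately, using Proposition \ref{LEFStandardProp} (local embeddings of finite subsets into finite groups) as the operative form of LEF.

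For the forward direction, starting from a separating partial homomorphism $\phi_n : \Gamma \to \Sym(k_n)$ as furnished by Definition \ref{SoficLEFDefn}, I would set $\Phi(g) = \pi_{\mathcal{A}}((\phi_n(g))_n)$. The partial-homomorphism condition says that $\phi_n(gh) = \phi_n(g)\phi_n(h)$ on a cofinite, hence $\mathcal{U}$-large, set of indices, so $\Phi$ is a group homomorphism into $G_{\mathcal{A},\mathbf{k}}$. The separating condition $d_{k_n}(\phi_n(g),\id_{k_n}) \to 1$ passes to the $\mathcal{U}$-limit, giving $d_{\mathcal{M},\mathbf{k}}(\pi_{\mathcal{M},\mathcal{A}}\Phi(g), e) = 1$ for every $e \neq g \in \Gamma$; so $\pi_{\mathcal{M},\mathcal{A}} \circ \Phi$ is a sofic representation.

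For the converse, I would pick a set-theoretic lift $(\phi_n(g))_n \in G_{\mathbf{k}}$ of $\Phi(g)$ for each $g \in \Gamma$, normalized so that $\phi_n(e) = \id_{k_n}$ for all $n$. To establish LEF via Proposition \ref{LEFStandardProp}, fix a finite subset $A \subseteq \Gamma$ and set $A^{\pm} = (A^{-1} A) \setminus \{e\}$. For each $(g,h) \in A \times A$, the multiplicativity of $\Phi$ gives $M_{g,h} := \{n : \phi_n(g)\phi_n(h) = \phi_n(gh)\} \in \mathcal{U}$, and for each $w \in A^{\pm}$ the separating property of $\pi_{\mathcal{M},\mathcal{A}} \circ \Phi$ gives $I_w := \{n : \phi_n(w) \neq \id_{k_n}\} \in \mathcal{U}$. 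The intersection of these finitely many $\mathcal{U}$-large sets lies in $\mathcal{U}$, hence is non-empty; choose any $n$ in it. Then $\phi_n|_A$ is multiplicative on pairs whose product lies in $A$, and injective (for distinct $g,h \in A$, the normalization $\phi_n(e) = \id_{k_n}$ combined with the equation $\phi_n(g^{-1})\phi_n(g) = \phi_n(e)$ on a $\mathcal{U}$-large set lets us identify $\phi_n(g^{-1})$ with $\phi_n(g)^{-1}$, so that $\phi_n(g)^{-1}\phi_n(h) = \phi_n(g^{-1}h) \neq \id_{k_n}$). Hence $\phi_n|_A$ is a local embedding into $\Sym(k_n)$, and $\Gamma$ is LEF.

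The principal obstacle is that a set-theoretic lift of a homomorphism to $G_{\mathcal{A},\mathbf{k}}$ is only multiplicative on a $\mathcal{U}$-large set of indices \emph{for each pair}, and in general there is no single cofinite set on which multiplicativity holds simultaneously for all pairs; so one cannot extract a single partial homomorphism $\Gamma \to \Sym(k_n)$ directly from the lift. Routing the converse through the finite-subset reformulation of LEF sidesteps this difficulty, since each finite $A$ imposes only finitely many constraints, which the ultrafilter resolves at a single index.
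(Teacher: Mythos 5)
Your proposal is correct and takes essentially the same route as the paper: the forward direction is the same diagonal construction $\Phi = \pi_{\mathcal{A}}\circ(g\mapsto(\phi_n(g))_n)$, and the converse likewise lifts $\Phi$ coordinatewise, intersects finitely many $\mathcal{U}$-large sets to find a single index at which the lift restricts to a local embedding of $A$, and concludes via Proposition \ref{LEFStandardProp}. One small bookkeeping point: before choosing $n$ you should also throw the sets $M_{g^{-1},g}$ and $M_{g^{-1},h}$ for $g,h\in A$ (equivalently, the $M$'s over the enlarged finite set $A\cup A^{-1}\cup A^{-1}A\cup\lbrace e\rbrace$) into the intersection, since your injectivity argument uses multiplicativity at exactly those pairs, which as written are not among the finitely many sets you fixed in advance; this is a trivial enlargement of the same finite family.
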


\begin{proof}
Given a separating partial homomorphism 
$(\phi_n : \Gamma \rightarrow \Sym(k_n))$, 
let $\mathbf{k} = (k_n)_n$ and define the function 
$\tilde{\Phi} : \Gamma \rightarrow G_{\mathbf{k}}$ 
by $\tilde{\Phi}(g) = (\phi_n (g))_n$. 
Then $\Phi = \pi_{\mathcal{A}} \circ \tilde{\Phi}$ 
has the desired properties. 

Conversely suppose such a homomorphism $\Phi$ exists, 
let $A \subseteq \Gamma$ be a finite set, 
and for each $g \in A$, let 
$\tilde{\Phi}(g) \in G_{\mathbf{k}}$ be a lift of 
$\Phi (g)$, so that $\tilde{\Phi}(g)_n \in \Sym(k_n)$. 
For each $g,h \in A$ satisfying $gh \in A$, 
we have 
$\tilde{\Phi}(g)\tilde{\Phi}(h)\tilde{\Phi}(gh)^{-1} 
\in N_{\mathcal{A},\mathbf{k}}$. 
Moreover for each $e \neq g \in A$, 
since $d_{\mathcal{M},\mathbf{k}} \big( (\pi_{\mathcal{M},\mathcal{A}} \circ \Phi)(g),e \big)=1$, 
\begin{center}
$\lbrace n \in \mathbb{N} : 
d_{k_n} ( \tilde{\Phi}(g)_n,\id_{k_n} ) \geq 1/2 \rbrace \in \mathcal{U}$. 
\end{center}
In particular, there exists $n \in \mathbb{N}$ 
such that $g \mapsto \tilde{\Phi}(g)_n$ 
defines a local embedding. 
We apply Proposition \ref{LEFStandardProp} 
to conclude. 
\end{proof}

In \cite{ArPa} Sections 4 and 6, 
it is proved that $\Gamma$ is (weakly) stable iff 
for every $\mathbf{k}$, 
every homomorphism 
(respectively every sofic representation) 
from $\Gamma$ to $G_{\mathcal{M},\mathbf{k}}$ 
may be lifted to $G_{\mathbf{k}}$. 
Similarly, we have the following. 

\begin{propn} \label{UltraProdLiftProp}
The group $\Gamma$ is (weakly) locally stable iff, 
for every $\mathbf{k}$ 
and every homomorphism 
(respectively every sofic representation) 
$\Phi:\Gamma \rightarrow G_{\mathcal{M},\mathbf{k}}$, 
there is a homomorphism $\hat{\Phi} : \Gamma \rightarrow G_{\mathcal{A},\mathbf{k}}$ 
such that 
$\pi_{\mathcal{M},\mathcal{A}} \circ \hat{\Phi} = \Phi$. 
\end{propn}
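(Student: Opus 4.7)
I propose to derive both directions of the proposition from the equation-based characterizations of (weak) local stability in Propositions \ref{LocStabEqnProp} and \ref{WeakLocStabEqnProp}.

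For the ``only if'' direction in the locally-stable case, given a homomorphism $\Phi : \Gamma \rightarrow G_{\mathcal{M},\mathbf{k}}$, I first lift $\Phi \circ \pi$ to a homomorphism $\tilde{\phi} : \mathbb{F} \rightarrow G_{\mathbf{k}}$ by picking any lift of each $\Phi(\pi(s_i))$; this is encoded by a sequence of tuples $\overline{\sigma}^{(n)} \in \Sym(k_n)^S$ satisfying $\lim_{n \rightarrow \mathcal{U}} d_{k_n}(r(\overline{\sigma}^{(n)}),\id_{k_n}) = 0$ for every $r \in \ker(\pi)$. By Proposition \ref{LocStabEqnProp}, for each $m \in \mathbb{N}$ I choose parameters $\delta_m > 0$ and $a_m \in \mathbb{N}$ so that every $\delta_m$-almost-solution to $\ker(\pi) \cap B_S(a_m)$ is $(1/m)$-close to some exact solution to $\ker(\pi) \cap B_S(m)$. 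Let $B_m \subseteq \mathbb{N}$ be the set of $n$ for which $\overline{\sigma}^{(n)}$ is a $\delta_m$-almost-solution to $\ker(\pi) \cap B_S(a_m)$; as a finite intersection of sets in $\mathcal{U}$, each $B_m \in \mathcal{U}$, and by arranging $\delta_m$ decreasing and $a_m$ increasing I may assume $B_1 \supseteq B_2 \supseteq \cdots$. For each $n \in B_m$ I fix $\overline{\tau}^{(n,m)}$ as promised by local stability; setting $m(n) := \max \lbrace m \leq n : n \in B_m \rbrace$ (arbitrary convention off $B_1$) and $\overline{\tau}^{(n)} := \overline{\tau}^{(n,m(n))}$, I define $\hat{\Phi}(g) := \pi_{\mathcal{A}}((w(g)(\overline{\tau}^{(n)}))_n)$ for any set-theoretic section $w : \Gamma \rightarrow \mathbb{F}$ of $\pi$. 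The verification then rests on the single observation that for $n$ in the $\mathcal{U}$-set $B_M \cap [M,\infty)$ one has $m(n) \geq M$, so $\overline{\tau}^{(n)}$ exactly solves $\ker(\pi) \cap B_S(M)$ and is $(1/M)$-close to $\overline{\sigma}^{(n)}$ as tuples, which via the Lipschitz bound $d_{k_n}(u(\overline{\tau}^{(n)}), u(\overline{\sigma}^{(n)})) \leq |u|/M$ for $u \in \mathbb{F}$ yields independence of $\hat{\Phi}$ from $w$, the homomorphism property $\hat{\Phi}(g)\hat{\Phi}(h) = \hat{\Phi}(gh)$, and the lifting property $\pi_{\mathcal{M},\mathcal{A}} \circ \hat{\Phi} = \Phi$.

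For the converse I argue contrapositively. If $\lbrace r = e : r \in \ker(\pi) \rbrace$ is not locally stable, there exist $\epsilon > 0$ and a finite $R_0 \subseteq \ker(\pi)$ such that, for each $m$, some $\overline{\sigma}^{(m)} \in \Sym(k_m)^S$ is a $(1/m)$-almost-solution to $\ker(\pi) \cap B_S(m)$ but not $\epsilon$-close to any exact solution of $R_0$. Setting $\Phi(g) := \pi_{\mathcal{M}}((w(g)(\overline{\sigma}^{(m)}))_m)$ gives a well-defined homomorphism $\Phi : \Gamma \rightarrow G_{\mathcal{M},\mathbf{k}}$ by the same Lipschitz-style argument. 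Applying the lifting hypothesis yields $\hat{\Phi}$, from which I extract tuples $\overline{\tau}^{(m)} \in \Sym(k_m)^S$ by lifting each $\hat{\Phi}(\pi(s_i))$; the homomorphism property of $\hat{\Phi}$ forces $\lbrace m : r(\overline{\tau}^{(m)}) = \id_{k_m} \rbrace \in \mathcal{U}$ for each $r \in R_0$, while the lifting property forces $\overline{\tau}^{(m)}$ and $\overline{\sigma}^{(m)}$ to be $\epsilon$-close along $\mathcal{U}$. Since $R_0$ is finite, intersecting these finitely many $\mathcal{U}$-sets produces some $m$ for which $\overline{\tau}^{(m)}$ is an exact solution to $R_0$ that is $\epsilon$-close to $\overline{\sigma}^{(m)}$, contradicting our setup.

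The weak case runs along identical lines, with Proposition \ref{WeakLocStabEqnProp} replacing Proposition \ref{LocStabEqnProp}: in the forward direction, the sofic-representation hypothesis supplies the $(1-\delta_m)$-separating condition on $B_S(a_m) \setminus \ker(\pi)$ on a $\mathcal{U}$-large set of $n$, as required to apply weak local stability of equations; in the reverse direction, the separation built into the failure of weak local stability renders the assembled $\Phi$ a sofic representation. The main technical difficulty in both cases is the bookkeeping around the gap between ``$\mathcal{U}$-large'' and ``cofinite in $\mathbb{N}$''; the nested structure of the $B_m$ together with the cut-off $m(n) = \max \lbrace m \leq n : n \in B_m \rbrace$ is precisely the device that converts the pointwise $\delta$-$\epsilon$ data supplied by (weak) local stability of equations into a genuine homomorphism between ultraproducts lifting $\Phi$.
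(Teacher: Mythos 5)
Your proposal is correct and follows essentially the same route as the paper: both directions are reduced to the equation-based characterizations (Propositions \ref{LocStabEqnProp} and \ref{WeakLocStabEqnProp}), with the nested $\mathcal{U}$-sets $B_m$ playing the role of the paper's $I_m$ and the coordinate-wise replacement by exact solutions yielding the lift, and the converse handled by the same contrapositive construction from a sequence of almost-solutions that are not $\epsilon$-close to solutions. Your cut-off $m(n)=\max\lbrace m\leq n : n\in B_m\rbrace$ and the word-Lipschitz check of $\pi_{\mathcal{M},\mathcal{A}}\circ\hat{\Phi}=\Phi$ on all of $\Gamma$ are only cosmetic variations on the paper's $I_m\setminus I_{m+1}$ bookkeeping and its verification on generators.
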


\begin{proof}
We shall use Proposition \ref{LocStabEqnProp}. 
Suppose first that 
$\lbrace r=e \rbrace_{r \in \ker(\pi)}$ 
is locally stable in permutations. 
Let $\Phi:\Gamma \rightarrow G_{\mathcal{M},\mathbf{k}}$ 
be a homomorphism; let 
$\tilde{\Phi}:\mathbb{F}\rightarrow G_{\mathbf{k}}$ 
be a lift of $(\Phi \circ \pi):\mathbb{F}
\rightarrow G_{\mathcal{M},\mathbf{k}}$ 
to $G_{\mathbf{k}}$ 
(so that $\tilde{\Phi}(\ker(\pi)) 
\leq N_{\mathcal{M},\mathbf{k}}$), 
and let $\tilde{\Phi}_n = p_n \circ \tilde{\Phi}$, 
where $p_n : G_{\mathbf{k}} \rightarrow \Sym(k_n)$ 
is projection to the $n$th co-ordinate. 
For any finite $R \subseteq \ker(\pi)$ 
and any $\delta > 0$, 
\begin{center}
$\lbrace n \in \mathbb{N} 
: d_{k_n} \big( \tilde{\Phi}_n(r),\id_{k_n} \big) 
< \delta \text{ for all } r \in R \rbrace \in \mathcal{U}$. 
\end{center}
For each $m \in \mathbb{N}$, let $\delta_m > 0$ 
and $a_m \in \mathbb{N}$ be such that 
$\delta_m \rightarrow 0$; 
$(a_m)$ is an increasing sequence and every 
$\delta_m$-almost-solution to 
$\ker(\pi) \cap B_S (a_m)$ is $(1/m)$-close 
to a solution for $\ker(\pi) \cap B_S (m)$. 
Let: 
\begin{center}
$I_m = \big\lbrace n \in \mathbb{N} 
: d_{k_n} \big( \tilde{\Phi}_n(r),\id_{k_n} \big) 
< \delta_m \text{ for all }r \in \ker(\pi) \cap B_S (a_m) \big\rbrace \in \mathcal{U}$. 
\end{center}
Then $I_{m+1} \subseteq I_m$ and: 
\begin{equation*}
\bigcap_{m \in \mathbb{N}} I_m = \lbrace n \in \mathbb{N} 
: \tilde{\Phi}_n (r)=e \text{ for all }r \in \ker(\pi) \rbrace. 
\end{equation*}
For $n \in I_m \setminus I_{m+1}$, 
there is a solution $\tilde{\Psi}_n \in \Sym (k_n) ^S$ 
to $\ker(\pi) \cap B_S (m)$ 
which is $(1/m)$-close to $\tilde{\Phi}_n$. 
Extend $\tilde{\Psi}_n$ to $\mathbb{F}$ 
and define 
$\tilde{\Psi}:\mathbb{F}\rightarrow G_{\mathbf{k}}$ 
by $p_n \circ \tilde{\Psi} = \tilde{\Psi}_n$. Then: 
every $r \in \ker (\pi)$ lies in $B_S (m)$ 
for all $m$ sufficiently large, 
so $\lbrace n \in \mathbb{N} : \tilde{\Psi}_n(r)=\id_{k_n} \rbrace \in \mathcal{U}$, 
hence $\tilde{\Psi} (\ker(\pi)) \leq N_{\mathcal{A},\mathbf{k}}$, 
and $\tilde{\Psi}$ descends to 
$\Psi : \Gamma \rightarrow G_{\mathcal{A},\mathbf{k}}$. 
Finally, for all $\epsilon > 0$, 
\begin{equation*}
\lbrace n \in \mathbb{N} 
: d_{k_n} \big( \tilde{\Phi}_n(s),\tilde{\Psi}_n(s) \big) < \epsilon \text{ for all }s \in S \rbrace 
\in \mathcal{U},
\end{equation*}
so $\pi_{\mathcal{M},\mathcal{A}} \circ \Psi$ 
agrees with $\Phi$ on $S$, 
hence on $\Gamma$, and $\hat{\Phi}=\Psi$ is as desired. 

Conversely, suppose that 
$\lbrace r=e \rbrace_{r \in \ker(\pi)}$ 
is not locally stable in permutations. 
Then there exists $\epsilon > 0$ and 
$R \subseteq \ker(\pi)$ finite such that 
for all $n \in \mathbb{N}$, 
there exists $k_n \in \mathbb{N}$ and 
a $(1/n)$-almost-solution 
$\overline{\sigma}^{(n)} \in \Sym(k_n)^S$ 
to $\ker (\pi) \cap B_S (n)$, which is not 
$\epsilon$-close to any solution to $R$. Define 
$\tilde{\Phi}:\mathbb{F} \rightarrow G_{\mathbf{k}}$ 
by $\tilde{\Phi} (s) = (\overline{\sigma}^{(n)}(s))_n$. 
Then for every $r \in \ker(\pi)$, 
and all $n$ sufficiently large, 
$d_{k_n} (r (\overline{\sigma}^{(n)}),\id_{k_n}) < 1/n$, 
so $\tilde{\Phi} (r) \in N_{\mathcal{M},\mathbf{k}}$, 
and $\tilde{\Phi}$ descends to 
$\Phi : \Gamma \rightarrow G_{\mathcal{M},\mathbf{k}}$. 
If $\Phi$ lifts to 
$\hat{\Phi}:\Gamma
\rightarrow G_{\mathcal{A},\mathbf{k}}$, 
and $\overline{\tau} ^{(n)} \in \Sym (k_n)^S$ is such that 
$\hat{\Phi} (\pi(s)) = (\overline{\tau} ^{(n)}(s))_n N_{\mathcal{A},\mathbf{k}}$ for all $s \in S$, 
then $\big( \overline{\sigma}^{(n)}(s)\overline{\tau} ^{(n)}(s)^{-1} \big)_n \in N_{\mathcal{M},\mathbf{k}}$ 
for $s \in S$. 
Hence there exists $n$ for which 
$\overline{\sigma}^{(n)}$ and $\overline{\tau} ^{(n)}$ are $\epsilon$-close and 
$r (\overline{\tau} ^{(n)}) = e$ for all $r \in R$, 
contradiction. 

The argument for weak local stability is much the same, 
using Proposition \ref{WeakLocStabEqnProp} 
in the forward direction. 
\end{proof}

\subsection{First properties of the class of locally stable groups} \label{FirstPropSubsect}

All stable groups are locally stable. 
This includes all finite groups \cite{GlRi}; 
all polycyclic-by-finite groups 
and the Baumslag-Solitar groups $\BS(1,n)$  \cite{BeLuTh}; 
Grigorchuk's group and the Gupta-Sidki $p$-groups 
\cite{Zheng}, and the (restricted, regular) 
wreath product of any two finitely generated 
abelian groups \cite{LevLubWP}. 
Note that all groups listed above are amenable; 
finite-rank nonabelian free groups provide a class of 
nonamenable groups which are easily seen to be stable: 
a free group admits a finite presentation with 
no relations, and the empty set of equations 
is trivially stable. 

We continue to let $S$ be a finite generating set for 
the group $\Gamma$, $\mathbb{F}$ be the free group on basis $S$, 
and $\pi : \mathbb{F} \rightarrow \Gamma$ 
be the associated epimorphism. 

\begin{lem} \label{FPStabLem}
Suppose $\Gamma$ is finitely presented and (weakly) locally stable. 
Then $\Gamma$ is (weakly) stable. 
\end{lem}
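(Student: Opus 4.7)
The plan is to exploit finite presentation to promote the partial homomorphism furnished by (weak) local stability into a genuine homomorphism, at the cost of modifying only finitely many indices. Fix a finite set $R_0 \subseteq \mathbb{F}$ such that $\langle S \mid R_0 \rangle$ is a presentation of $\Gamma$, and let $(\phi_n : \Gamma \rightarrow \Sym(k_n))$ be an almost-homomorphism (respectively, a separating almost-homomorphism). By (weak) local stability there is a partial homomorphism $(\psi_n : \Gamma \rightarrow \Sym(k_n))$ with $d_{k_n}(\phi_n(g),\psi_n(g))\rightarrow 0$ for every $g\in\Gamma$.

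For each $n$ let $\tilde{\psi}_n : \mathbb{F} \rightarrow \Sym(k_n)$ be the unique homomorphism sending $s$ to $\psi_n(s)$ on $S$. The key observation is the asymptotic identity $\tilde{\psi}_n(w) = \psi_n(\pi(w))$ for each fixed $w \in \mathbb{F}$ and all $n$ sufficiently large. This follows by iterating the defining relation $\psi_n(gh) = \psi_n(g)\psi_n(h)$ over the letters of $w$, together with the observations that $\psi_n(e) = \id_{k_n}$ eventually and $\psi_n(g^{-1}) = \psi_n(g)^{-1}$ eventually (both immediate consequences of the partial homomorphism property applied to $(e,e)$ and to $(g,g^{-1})$). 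Applying this to each $r \in R_0$, for which $\pi(r) = e$, and using finiteness of $R_0$ to take the maximum of the relevant thresholds, one obtains a single $N$ such that $\tilde{\psi}_n(r) = \id_{k_n}$ for every $r \in R_0$ whenever $n \geq N$. For such $n$, $\tilde{\psi}_n$ factors through the presentation to give a genuine homomorphism $\psi_n^{\ast} : \Gamma \rightarrow \Sym(k_n)$; for the finitely many $n < N$ we set $\psi_n^{\ast}$ to be the trivial homomorphism.

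To check convergence, fix $g \in \Gamma$ and a word $w \in \mathbb{F}$ with $\pi(w) = g$. The same iterated computation gives $\psi_n^{\ast}(g) = \tilde{\psi}_n(w) = \psi_n(g)$ for all sufficiently large $n$, and hence $d_{k_n}(\phi_n(g),\psi_n^{\ast}(g)) = d_{k_n}(\phi_n(g),\psi_n(g)) \rightarrow 0$, as required for (weak) stability. I do not anticipate a serious obstacle: the whole argument turns on a finite-set compactness trick that converts the pointwise-eventual condition in the definition of partial homomorphism into a uniformly eventual statement on the finite relator set $R_0$, and the finitely many exceptional indices are harmless by the asymptotic nature of Definition \ref{LocalStabDefn}.
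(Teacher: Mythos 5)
Your proposal is correct and follows essentially the same route as the paper's proof: finite presentation is used to convert the eventual multiplicativity of the partial homomorphism on the finitely many relators into a genuine homomorphism (defined via the free group on $S$) that agrees with the partial homomorphism on each fixed element for all large $n$, after which the triangle inequality with the original almost-homomorphism gives (weak) stability. The only cosmetic difference is that you spell out the handling of inverses and the identity, and of the finitely many exceptional indices, which the paper leaves implicit.
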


\begin{proof}
It suffices to show that, 
for every partial homomorphism 
$\phi_n :\Gamma \rightarrow\Sym(k_n)$, 
there is a sequence of homomorphisms 
$\psi_n :\Gamma \rightarrow\Sym(k_n)$ such that, 
for all $g \in \Gamma$, 
$d_{k_n} \big( \phi_n(g),\psi_n(g) \big) \rightarrow 0$ 
as $n \rightarrow \infty$. 
Indeed, we shall produce $\psi_n$ such that 
$\phi_n(g) = \psi_n(g)$ 
for all sufficiently large $n$. 

Let $R \subseteq \mathbb{F}$ be a finite set which normally generates $\ker (\pi)$. 
Given $m > 0$, let $N = N(m) > 0$ be sufficiently large that for all $g,h \in \Gamma$ 
of word-length at most $m$ with respect to $S$, 
and for all $n \geq N$, $\phi_n(gh) = \phi_n(g)\phi_n(h)$. 
In particular, for every $w \in \mathbb{F}$ 
of length at most $m$, 
we have: 
\begin{equation} \label{FPEqn}
\phi_n (\pi(w)) = w (\overline{\sigma}^{(n)}), 
\end{equation}
where $\overline{\sigma}^{(n)} = (\phi_n(s))_{s\in S} \in \Sym(k_n)^S$. 
In particular, for all $r \in R$, 
$r(\overline{\sigma}^{(n)})=\id_{k_n}$, 
and we have a well-defined homomorphism 
$\psi_n : \Gamma \rightarrow \Sym(k_n)$ 
sending $s$ to $\phi_n(s)$. 
By (\ref{FPEqn}) $\phi_n$ and $\psi_n$ agree 
on $B_S (m) \subseteq \Gamma$, as desired. 
\end{proof}

We recall the key observation made following 
Definition \ref{LocalStabDefn} above, 
which follows immediately from the relevant definitions. 
The analogous statement for stability 
appeared \cite{GlRi} Theorem 2 
and \cite{ArPa} Theorem 7.2. 

\begin{lem} \label{soficwlslem}
Suppose $\Gamma$ is sofic and weakly locally stable. 
Then $\Gamma$ is LEF. 
\end{lem}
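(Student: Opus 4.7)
The plan is to produce the required separating partial homomorphism directly from a sofic approximation, by applying weak local stability and checking that the separation property survives the approximation.

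First, since $\Gamma$ is sofic, choose a separating almost-homomorphism $\phi_n : \Gamma \to \Sym(k_n)$. By weak local stability applied to this sequence, there exists a partial homomorphism $\psi_n : \Gamma \to \Sym(k_n)$ with $d_{k_n}(\phi_n(g), \psi_n(g)) \to 0$ for every $g \in \Gamma$. To conclude via Definition \ref{SoficLEFDefn} that $\Gamma$ is LEF, it suffices to verify that $(\psi_n)$ is separating.

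Fix $e \neq g \in \Gamma$. Using the triangle inequality for $d_{k_n}$, together with the fact that $d_{k_n}$ takes values in $[0,1]$,
\begin{equation*}
d_{k_n}(\psi_n(g), \id_{k_n}) \;\geq\; d_{k_n}(\phi_n(g), \id_{k_n}) - d_{k_n}(\phi_n(g), \psi_n(g)).
\end{equation*}
The first term on the right tends to $1$ because $(\phi_n)$ is separating, while the second tends to $0$ by the choice of $(\psi_n)$. Hence $d_{k_n}(\psi_n(g), \id_{k_n}) \to 1$ as $n \to \infty$, so $(\psi_n)$ is a separating partial homomorphism, and $\Gamma$ is LEF.

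There is no real obstacle here; the argument is a one-line triangle inequality applied to the defining inequalities of soficity and weak local stability. This is precisely the sense in which the lemma was described earlier as following ``immediately from the relevant definitions''.
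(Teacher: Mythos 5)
Your argument is correct and is exactly the one the paper has in mind: the lemma is stated as ``immediate from the relevant definitions'', and your spelled-out version (take a separating almost-homomorphism from soficity, apply weak local stability, and use the triangle inequality for $d_{k_n}$ to see that the resulting partial homomorphism is still separating, hence witnesses LEF via Definition \ref{SoficLEFDefn}) is precisely that intended argument. No issues.
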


Within the class of amenable groups, 
there is a simple necessary-and-sufficient 
condition for weak stability. 

\begin{thm}[\cite{ArPa} Theorem 7.2 (iii)] \label{ArzPauThm}
Suppose $\Gamma$ is amenable. 
Then $\Gamma$ is weakly stable iff it is 
residually finite. 
\end{thm}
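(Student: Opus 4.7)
The plan is to prove each direction separately. The forward implication follows from routine manipulations of the definitions combined with soficity of amenable groups, whereas the reverse direction is substantially deeper and I would invoke Elek-Szab\'{o}'s uniqueness theorem for sofic approximations of amenable groups.

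For the forward direction, observe that since $\Gamma$ is amenable it is sofic, so it admits a separating almost-homomorphism $(\phi_n : \Gamma \to \Sym(k_n))$. Applying the weak stability hypothesis produces homomorphisms $(\psi_n : \Gamma \to \Sym(k_n))$ with $d_{k_n}(\phi_n(g), \psi_n(g)) \to 0$ for every $g \in \Gamma$. For $e \neq g$, the reverse triangle inequality gives $d_{k_n}(\psi_n(g), \id_{k_n}) \geq d_{k_n}(\phi_n(g), \id_{k_n}) - d_{k_n}(\phi_n(g), \psi_n(g)) \to 1$, so $g \notin \ker(\psi_n)$ for $n$ sufficiently large; since each $\Sym(k_n)$ is finite, this exhibits $\Gamma$ as residually finite.

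For the reverse direction I would use the Elek-Szab\'{o} uniqueness theorem: for an amenable group, any two separating almost-homomorphisms $\phi_n, \phi_n' : \Gamma \to \Sym(k_n)$ are asymptotically conjugate, meaning there exist $\sigma_n \in \Sym(k_n)$ with $d_{k_n}(\sigma_n \phi_n(g) \sigma_n^{-1}, \phi_n'(g)) \to 0$ for every $g \in \Gamma$ (after a diagonal argument exploiting countability of $\Gamma$). Given a separating almost-homomorphism $(\phi_n)$ to be approximated, I would first construct a companion sequence $(\tilde{\psi}_n : \Gamma \to \Sym(k_n))$ of genuine separating homomorphisms with matching dimensions, as follows. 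Using residual finiteness, fix a descending chain $N_1 \geq N_2 \geq \ldots$ of finite-index normal subgroups of $\Gamma$ with $\bigcap_m N_m = \{e\}$, and write $q_m = [\Gamma : N_m]$. For each $n$ pick $m(n) \to \infty$ slowly enough that $q_{m(n)}/k_n \to 0$, set $r_n = \lfloor k_n / q_{m(n)} \rfloor$, and let $\tilde{\psi}_n$ act on $[k_n]$ as the direct sum of $r_n$ copies of the left regular representation of $\Gamma/N_{m(n)}$ together with the trivial action on the remaining $k_n - r_n q_{m(n)}$ points. Each $\tilde{\psi}_n$ is a homomorphism, and the sequence is separating because for each fixed $g \neq e$ one has $g \notin N_{m(n)}$ for $n$ large while the density of fixed points in $[k_n]$ tends to zero. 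Applying Elek-Szab\'{o} to the pair $(\phi_n)$ and $(\tilde{\psi}_n)$ yields conjugators $\sigma_n$, and setting $\psi_n := \sigma_n^{-1} \tilde{\psi}_n \sigma_n$ furnishes the required approximating homomorphisms.

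The main obstacle will be the Elek-Szab\'{o} theorem itself, whose proof depends on the Ornstein-Weiss quasi-tiling machinery for amenable group actions; by contrast the construction of the companion sequence, the diagonalisation to pass from ultrafilter-limits to pointwise limits, and the final conjugation step are all routine.
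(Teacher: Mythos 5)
Your proposal is correct, and it is essentially the standard argument: the paper does not prove this statement (it is quoted from \cite{ArPa}, Theorem 7.2(iii)), but both the original proof and the paper's own proof of the analogous Lemma \ref{amenwlslem} (amenable $\Rightarrow$ weakly locally stable iff LEF) rest on exactly the ingredient you identify, namely the Elek--Szab\'{o} theorem \cite{ElSz} that sofic approximations of a countable amenable group are unique up to asymptotic conjugacy, combined with a padded regular-representation construction from the finite quotients and the easy triangle-inequality direction. The only points to keep an eye on are the ones you already flag implicitly: $k_n \to \infty$ in the nontrivial (infinite $\Gamma$) case so that $q_{m(n)}/k_n \to 0$ can be arranged, and the passage from ultrafilter-limit conjugacy to full-sequence pointwise convergence, which is handled by the subsequence/diagonal argument you mention.
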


We prove a similar criterion for weak local stability 
among amenable groups. 

\begin{lem} \label{amenwlslem}
Suppose $\Gamma$ is amenable. 
Then $\Gamma$ is weakly locally stable iff it is LEF. 
\end{lem}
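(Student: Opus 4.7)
The forward direction is immediate: amenable groups are sofic, so Lemma \ref{soficwlslem} shows that a weakly locally stable amenable group is LEF. For the converse, assume $\Gamma$ is amenable and LEF. By Proposition \ref{UltraProdLiftProp} (weak local stability case), it suffices, for an arbitrary sequence $\mathbf{k}=(k_n)$ and an arbitrary sofic representation $\Phi:\Gamma\rightarrow G_{\mathcal{M},\mathbf{k}}$, to produce a homomorphism $\hat{\Phi}:\Gamma\rightarrow G_{\mathcal{A},\mathbf{k}}$ with $\pi_{\mathcal{M},\mathcal{A}}\circ\hat{\Phi}=\Phi$.

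The strategy, parallel to the proof of Theorem \ref{ArzPauThm} in \cite{ArPa}, has two parts. First, I would use LEF to construct a second sofic representation $\Phi_0':\Gamma\rightarrow G_{\mathcal{M},\mathbf{k}}$ into the \emph{same} metric ultraproduct, which already lifts through a homomorphism $\Phi_0:\Gamma\rightarrow G_{\mathcal{A},\mathbf{k}}$. Second, I would invoke the theorem of Elek and Szabo on uniqueness of sofic approximations of amenable groups, which states that any two sofic representations of an amenable group into a common metric ultraproduct of symmetric groups are conjugate by an element of that ultraproduct. Writing $\Phi(\gamma)=g\,\Phi_0'(\gamma)\,g^{-1}$ for some $g\in G_{\mathcal{M},\mathbf{k}}$ and lifting $g$ to $\bar{g}\in G_{\mathcal{A},\mathbf{k}}$ via the surjection $\pi_{\mathcal{M},\mathcal{A}}$, one obtains the desired lift $\hat{\Phi}(\gamma):=\bar{g}\,\Phi_0(\gamma)\,\bar{g}^{-1}$.

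To construct $\Phi_0$, I would use Proposition \ref{LEFStandardProp} to choose, for each $m\in\mathbb{N}$, a finite group $Q_m$ and a local embedding $\iota_m:B_S(m)\rightarrow Q_m$, and compose with the regular representation $\rho_m:Q_m\rightarrow\Sym(\lvert Q_m\rvert)$. For each $n$, choose $m_n\rightarrow\infty$ with $\lvert Q_{m_n}\rvert\leq k_n$ (possible since $k_n\rightarrow\infty$), and define $\phi_{0,n}:\Gamma\rightarrow\Sym(k_n)$ to act on a suitable subset of $[k_n]$ as $\lfloor k_n/\lvert Q_{m_n}\rvert\rfloor$ disjoint copies of $\rho_{m_n}\circ\iota_{m_n}$, and trivially on the remaining at most $\lvert Q_{m_n}\rvert$ points. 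Then $(\phi_{0,n})_n$ is a separating partial homomorphism: partial-homomorphism-ness follows from the defining property of local embeddings, since for any fixed $g,h\in\Gamma$ one eventually has $g,h,gh\in B_S(m_n)$; separation follows because the regular representation is free and the trivial-padding region has relative size at most $\lvert Q_{m_n}\rvert/k_n\rightarrow 0$. Set $\Phi_0:=\pi_{\mathcal{A}}\circ(\phi_{0,n})_n:\Gamma\rightarrow G_{\mathcal{A},\mathbf{k}}$ and $\Phi_0':=\pi_{\mathcal{M},\mathcal{A}}\circ\Phi_0$.

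The main technical point is this construction of $\Phi_0$ matching the prescribed sequence $\mathbf{k}$: the asymptotic choice $\lvert Q_{m_n}\rvert=o(k_n)$ with $m_n\rightarrow\infty$ must simultaneously preserve the partial-homomorphism property and the separating property. Once $\Phi_0$ is in hand, the Elek-Szabo uniqueness theorem, combined with the surjectivity of $\pi_{\mathcal{M},\mathcal{A}}$, concludes the argument with essentially no additional work.
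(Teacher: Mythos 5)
Your proposal is correct and takes essentially the same route as the paper: the lifting criterion of Proposition \ref{UltraProdLiftProp}, an algebraic-ultraproduct homomorphism supplied by LEF, and the Elek--Szabo conjugacy theorem for sofic representations of amenable groups, with the conjugator lifted through $\pi_{\mathcal{M},\mathcal{A}}$. The only difference is that you make explicit the padding construction needed to realize the LEF approximation on the prescribed sequence $\mathbf{k}$, a point the paper's proof leaves implicit by appealing to Lemma \ref{LEFUltraprodLem}.
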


\begin{proof}
Every amenable group is sofic, as a result 
the ``only if'' direction follows from 
Lemma \ref{soficwlslem}. 
For the ``if'' direction, 
we shall use the criterion in terms of 
ultraproducts from Proposition \ref{UltraProdLiftProp}. 
Since $\Gamma$ is LEF, by Lemma \ref{LEFUltraprodLem}, 
there exists an increasing sequence $\mathbf{k}=(k_n)$ 
and a homomorphism 
$\psi : \Gamma \rightarrow G_{\mathcal{A},\mathbf{k}}$ 
such that $\pi_{\mathcal{M},\mathcal{A}} \circ \psi$ 
is a separating homomorphism.  
Let $\phi : \Gamma \rightarrow G_{\mathcal{M},\mathbf{k}}$ 
be any separating homomorphism. 
By \cite{ElSz}, 
since $\Gamma$ is amenable, 
if $\phi_1 ,\phi_2 :\Gamma \rightarrow G_{\mathcal{M},\mathbf{k}}$ 
are separating homomorphisms, 
then there exists $h \in G_{\mathcal{M},\mathbf{k}}$ 
such that for all $g \in \Gamma$, 
$\phi_2 (g) = h \phi_1 (g) h^{-1}$. 
Apply this result to $\phi_1 = \phi$ and 
$\phi_2 = \pi_{\mathcal{M},\mathcal{U}} \circ \psi$ 
to obtain a corresponding $h \in G_{\mathcal{M},\mathbf{k}}$. 
Letting $\tilde{h} \in \pi_{\mathcal{M},\mathcal{A}} ^{-1} (h)$, 
$\tilde{\phi} : g \mapsto \tilde{h}^{-1} \psi(g) \tilde{h}$ is a homomorphism 
satisfying $\pi_{\mathcal{M},\mathcal{A}} \circ \tilde{\phi} = \phi$. 
By Proposition \ref{UltraProdLiftProp}, 
$\Gamma$ is weakly stable. 
\end{proof}

\begin{coroll}
There exist groups with the following properties: 
\begin{itemize}
\item[(i)] A finitely generated weakly locally stable group which is 
not weakly stable. 
\item[(ii)] A finitely generated group which 
is not weakly locally stable. 
\end{itemize}
\end{coroll}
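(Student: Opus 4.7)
The plan is to combine the three dichotomies from Subsection~\ref{FirstPropSubsect}: among amenable groups weak local stability is equivalent to LEF (Lemma~\ref{amenwlslem}); among finitely presented groups it is equivalent to weak stability (Lemma~\ref{FPStabLem}); and among amenable groups weak stability is equivalent to residual finiteness (Theorem~\ref{ArzPauThm}). Hence part (i) reduces to exhibiting a finitely generated amenable LEF group which is not residually finite, and part (ii) reduces to exhibiting a finitely presented amenable group which is not residually finite.

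For (i), the two families of examples highlighted in the introduction both qualify. The group $\mathcal{A}(\mathbb{Z})$ of Theorem~\ref{IntroAltEnrThm} is finitely generated, amenable (being a semidirect product of the locally finite group $\FAlt(\mathbb{Z})$ by $\mathbb{Z}$), and locally stable, hence weakly locally stable; it fails to be residually finite because its infinite simple normal subgroup $\FAlt(\mathbb{Z})$ must lie in the kernel of every homomorphism to a finite group, so by Theorem~\ref{ArzPauThm} it is not weakly stable. Alternatively, for any Cantor minimal subshift the derived subgroup $\Gamma'$ of the topological full group, as in Theorem~\ref{IntroFullGrpMainThm}, is finitely generated, infinite and simple, amenable by \cite{JusMon} and LEF by \cite{GrigMedy}, and so supplies another witness.

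For (ii), I would take any finitely presented amenable group $\Gamma$ which is not residually finite; classical examples go back to P.~Hall and Abels. If such a $\Gamma$ were weakly locally stable, Lemma~\ref{FPStabLem} would upgrade this to weak stability, whereupon Theorem~\ref{ArzPauThm} would force $\Gamma$ to be residually finite, contradicting the choice of $\Gamma$.

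No substantive obstacle is anticipated: all the real work has been done in the structural lemmas of this subsection, and the corollary simply pairs them with known examples. The only task remaining is the verification that the cited witnesses have the advertised properties (amenability, LEF-ness or finite presentability, and explicit non-residual-finiteness), each of which is well documented in the references indicated.
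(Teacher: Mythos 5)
Your proposal is correct, and part (i) follows the paper's own route exactly: the paper likewise combines Lemma~\ref{amenwlslem} with Theorem~\ref{ArzPauThm} and then names a finitely generated amenable LEF non-residually-finite witness (it chooses $\Alt(5)\wr\mathbb{Z}$; your witnesses $\mathcal{A}(\mathbb{Z})$ and $\lBrack T\rBrack'$ work equally well, though note they are only treated later in the paper, and for $\mathcal{A}(\mathbb{Z})$ you do not need its local stability -- amenability plus LEF-ness, via Proposition~\ref{AltEnriLEFProp} and Lemma~\ref{amenwlslem}, already gives weak local stability).

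For part (ii) you take a genuinely different route. The paper uses Lemma~\ref{soficwlslem}: any finitely generated \emph{sofic} group which is not LEF fails weak local stability, and it cites $\BS(2,3)$ (sofic because residually solvable, not LEF because finitely presented and non-Hopfian, hence not residually finite). You instead go through Lemma~\ref{FPStabLem} plus Theorem~\ref{ArzPauThm}, so that any finitely presented \emph{amenable} non-residually-finite group works. Your logic is sound, and it buys something the paper's choice does not: an amenable (indeed soluble) example, showing the failure of weak local stability already occurs among very tame groups; the paper's route is more elementary (it needs only the immediate observation Lemma~\ref{soficwlslem}) and applies beyond the amenable world. One caveat on your witnesses: Hall's classical finitely generated soluble non-residually-finite groups are \emph{not} finitely presented, and the Abels group $A_p$ itself is residually finite (as the paper notes in the proof of Theorem~\ref{AbelsGroupThm}, it is finitely generated and linear over $\mathbb{Q}$), so it is actually weakly locally stable. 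The correct finitely presented amenable non-residually-finite witness is Abels' \emph{central quotient} $A_p/Z$ with $Z\cong\mathbb{Z}$ central, whose centre then contains a quasicyclic $p$-group; with that group in hand your argument for (ii) goes through.
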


\begin{proof}
\begin{itemize}
\item[(i)] By Lemma \ref{amenwlslem} and 
Theorem \ref{ArzPauThm}, 
any finitely generated amenable, 
LEF group which is not residually finite will do. 
The wreath product 
$\Alt(5) \wr \mathbb{Z}$ is such a group: 
it is amenable, being (locally finite)-by-abelian, 
and is LEF by Theorem 2.4 (ii) of \cite{VerGor}. 

\item[(ii)] By Lemma \ref{soficwlslem} 
any finitely generated sofic group which is not LEF will do. 
The Baumslag-Solitar group $\BS(2,3)$ is such a group: 
it is not LEF by Corollary 4 in Section 2.2 of \cite{VerGor} 
(and the comment following); 
its soficity is explained, for instance in 
Example 4.6 of \cite{Pestov}. 
\end{itemize}
\end{proof}

\begin{thm} \label{AbelsGroupThm}
There is a finitely presented soluble group which is weakly stable 
but not locally stable. 
\end{thm}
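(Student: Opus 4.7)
The plan is to exhibit a specific finitely presented soluble group $G$ with the required properties and to verify them by direct appeal to results already established, together with one cited non-stability theorem. The natural candidate is Abels's group: for a prime $p$, let $G$ be the subgroup of $\SL_4(\mathbb{Z}[1/p])$ consisting of upper triangular matrices with $1$ in the top-left and bottom-right diagonal entries. By Abels's theorem, $G$ is finitely presented, and it is manifestly soluble. Being a finitely generated linear group over a finitely generated commutative ring, $G$ is residually finite.

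To check weak stability, I would use that $G$ is soluble, hence amenable, together with the fact that it is residually finite, so that Theorem \ref{ArzPauThm} immediately gives that $G$ is weakly stable.

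To check failure of local stability, I would invoke the theorem of \cite{BeLuTh} that $G$ is \emph{not} stable. This is proved there by exhibiting an IRS of $G$ which is not a weak$^{\ast}$-limit of atomic IRS supported on finite-index subgroups of $G$, and then applying their IRS characterization of stability for amenable groups (the criterion mentioned in the discussion following Theorem \ref{IntroAmenMainThm}). Since $G$ is finitely presented, Lemma \ref{FPStabLem} asserts that local stability implies stability for $G$; contrapositively, $G$ is not locally stable.

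No further work is required: the argument is a direct combination of Theorem \ref{ArzPauThm}, Lemma \ref{FPStabLem} and the cited BLT non-stability result for Abels's group. The only genuinely substantive input is the last of these — the construction of a non-approximable IRS inside a residually finite amenable group — which has already been carried out in the literature, so there is no real obstacle to overcome beyond correctly citing it.
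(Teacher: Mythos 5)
Your proposal is correct and follows essentially the same route as the paper: the paper also takes the $p$-Abels group, cites \cite{BeLuTh} for its finite presentability, solubility, residual finiteness and non-stability, deduces weak stability from Theorem \ref{ArzPauThm} via amenability and residual finiteness, and rules out local stability by the contrapositive of Lemma \ref{FPStabLem}. No gaps.
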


\begin{proof}
Let $p$ be a prime and let 
$A_p \leq \GL_4 (\mathbb{Q})$ be 
the $p$-Abels group; see \cite{BeLuTh}[Corollary 8.7]. 
Then $A_p$ is: 
\begin{itemize}
\item[(i)] finitely presented; 

\item[(ii)] linear over $\mathbb{Q}$ (and being 
finitely generated, is therefore residually finite, 
by Mal'cev's Theorem); 

\item[(iii)] soluble (hence amenable); 

\item[(iv)] not stable. 
\end{itemize}
By (i) and (iv) we may apply Lemma 
\ref{FPStabLem} to conclude that 
$A_p$ is not locally stable. 
By (ii) and (iii) we may apply Theorem \ref{ArzPauThm} 
and deduce that $\Gamma$ is weakly stable. 
\end{proof}

\section{Limits in the space of marked groups} 
\label{MarkedGrpsSect}

The space $\mathcal{G}_d$ of marked $d$-generated groups 
was introduced in \cite{Grig,Grom} 
and may be constructed as follows. 
Fix $d \in \mathbb{N}$ and an ordered $d$-element set 
$\mathbf{X}=\lbrace x_1,\ldots,x_d \rbrace$. 
We may define $\mathcal{G}_d$ to be the set of all normal 
subgroups of the free group 
$\mathbb{F} = F(\mathbf{X})$ on $\mathbf{X}$. 
Alternatively, the points of $\mathcal{G}_d$ may be described in 
terms of \emph{$d$-markings on groups}: 
if $\Gamma$ is a $d$-generated group and $\mathbf{S} = (s_1,\ldots s_d)$ 
is an ordered generating $d$-tuple for $\Gamma$, 
then the pair $(\Gamma,\mathbf{S})$, 
henceforth to be called a \emph{$d$-marked group}
determines an epimorphism $\pi_{\mathbf{S}} : F(\mathbf{X}) \rightarrow \Gamma$ 
sending $x_i$ to $s_i$ for $1 \leq i \leq d$, and hence the point 
$\ker (\pi_{\mathbf{S}}) \in \mathcal{G}_d$.  
Conversely, every point $N \in \mathcal{G}_d$ determines the 
$d$-generated group $\Gamma_N = F(\mathbf{X})/N$ 
and the generating $d$-tuple $\mathbf{S}_N = (x_i N)_i \in \Gamma ^d$, 
so that $N = \ker (\pi_{\mathbf{S}_N})$. 

By a \emph{quotient of $d$-marked groups} 
$(\Gamma,\mathbf{S}) \twoheadrightarrow (\Delta,\mathbf{T})$ 
we shall mean a homomorphism $\phi : \Gamma \rightarrow \Delta$ 
such that $\phi (s_i) = t_i$ for $1 \leq i \leq d$. 
By the fact that $\mathbf{S}$ and $\mathbf{T}$ generate, 
such $\phi$ is unique (if it exists) and surjective. 
We note that two $d$-marked groups $(\Gamma,\mathbf{S})$ and $(\Delta,\mathbf{T})$ 
determine the same point in $\mathcal{G}_d$ if and only if 
the quotient of $d$-marked groups 
$(\Gamma,\mathbf{S}) \twoheadrightarrow (\Delta,\mathbf{T})$ exists and is an isomorphism. 
We may give $\mathcal{G}_d$ the structure of a metric space, 
as follows. For $N,M \vartriangleleft F(\mathbf{X})$ we write: 
\begin{center}
$\nu ( N,M ) 
= \max\lbrace n \in \mathbb{N} : 
N \cap B_{\mathbf{X}}(n) = M \cap B_{\mathbf{X}}(n) \rbrace \in \mathbb{N} \cup \lbrace \infty \rbrace$. 
\end{center}
and set: 
\begin{center}
$d (N,M) 
= 2^{-\nu ( N,M )}$. 
\end{center}
Then $d$ is a well-defined metric on $\mathcal{G}_d$. 
There is a well-known characterization of the class 
of $d$-generated LEF groups in terms of the topology 
of $\mathcal{G}_d$, as described in the next Lemma, 
which is proved in \cite{VerGor} Section 1.4. 

\begin{lem} \label{MarkedGrpsLEFLemma}
Let $(\Gamma,\mathbf{S})$ be a $d$-marked group 
and let $(\Delta_n,\mathbf{T}_n)$ 
be a sequence of marked finite 
$d$-generated groups. 
Then $(\Delta_n,\mathbf{T}_n)$ converges to 
$(\Gamma,\mathbf{S})$ in $\mathcal{G}_d$ iff, 
for all $r \in \mathbb{N}$, 
for all $n$ sufficiently large 
there is a local embedding 
$\phi_n : B_{\mathbf{S}}(r)\rightarrow \Delta_n$ 
satisfying $\phi_n(s_i)=t_{n,i}$ for $1 \leq i \leq d$. 
In particular, $\Gamma$ is LEF iff for some 
(equivalently any) $d$-marking $\mathbf{S}$ 
of $\Gamma$, $(\Gamma,\mathbf{S})$ lies in the 
closure in $\mathcal{G}_d$ 
of the subspace of marked $d$-generated 
finite groups. 
\end{lem}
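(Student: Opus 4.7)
The plan is to unpack convergence in $\mathcal{G}_d$: writing $N = \ker(\pi_{\mathbf{S}})$ and $N_n = \ker(\pi_{\mathbf{T}_n})$, we have $(\Delta_n,\mathbf{T}_n) \to (\Gamma,\mathbf{S})$ iff for every $R \in \mathbb{N}$, $N_n \cap B_{\mathbf{X}}(R) = N \cap B_{\mathbf{X}}(R)$ for all $n$ sufficiently large. The two directions of the first statement of the lemma will then reduce to translating this ``equality of kernels on large balls'' condition into the existence of a local embedding on the corresponding ball in $\Gamma$, and back.

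For the forward direction, suppose $(\Delta_n,\mathbf{T}_n) \to (\Gamma,\mathbf{S})$ and fix $r$. For each $g \in B_{\mathbf{S}}(r)$, choose a lift $w_g \in B_{\mathbf{X}}(r)$ with $\pi_{\mathbf{S}}(w_g)=g$, and ensure $w_{s_i} = x_i$. Define $\phi_n(g) = \pi_{\mathbf{T}_n}(w_g)$. For $n$ large enough that $N_n \cap B_{\mathbf{X}}(3r) = N \cap B_{\mathbf{X}}(3r)$: if $g=h$ in $\Gamma$ then $w_g w_h^{-1} \in N \cap B_{\mathbf{X}}(2r) = N_n \cap B_{\mathbf{X}}(2r)$, so $\phi_n$ is well-defined and, by the same argument, injective; and if $g,h,gh \in B_{\mathbf{S}}(r)$ then $w_g w_h w_{gh}^{-1}\in N\cap B_{\mathbf{X}}(3r) = N_n \cap B_{\mathbf{X}}(3r)$, giving $\phi_n(g)\phi_n(h)=\phi_n(gh)$. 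By construction $\phi_n(s_i)=t_{n,i}$.

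For the reverse direction, suppose for every $r$ and $n$ large there is a local embedding $\phi_n : B_{\mathbf{S}}(r) \to \Delta_n$ with $\phi_n(s_i) = t_{n,i}$; we must show $N_n \cap B_{\mathbf{X}}(r) = N \cap B_{\mathbf{X}}(r)$ eventually. Apply the hypothesis with $R=r$. The key observation is that for any $w = x_{i_1}^{\epsilon_1}\cdots x_{i_k}^{\epsilon_k}$ of length $k \leq r$, the prefixes $g_j = \pi_{\mathbf{S}}(x_{i_1}^{\epsilon_1}\cdots x_{i_j}^{\epsilon_j})$ all lie in $B_{\mathbf{S}}(r)$; so applying the local embedding property inductively along the word (and using $\phi_n(g^{-1})=\phi_n(g)^{-1}$, which follows from $\phi_n(e)=e$), one obtains $\phi_n(\pi_{\mathbf{S}}(w)) = \pi_{\mathbf{T}_n}(w)$. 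Combining this with injectivity of $\phi_n$ and the fact that $\phi_n(e)=e$ gives $w \in N$ iff $w \in N_n$, for $w \in B_{\mathbf{X}}(r)$. The main (minor) subtlety here is the careful bookkeeping of ball-radii so that the local embedding property applies at each step.

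Finally, for the ``in particular'' clause: if $\Gamma$ is LEF, apply Proposition~\ref{LEFStandardProp} to $A=B_{\mathbf{S}}(n)$ to obtain local embeddings $\phi_n : B_{\mathbf{S}}(n) \to Q_n$ into finite groups. Let $\Delta_n = \langle \phi_n(\mathbf{S})\rangle \leq Q_n$ with marking $\mathbf{T}_n = (\phi_n(s_i))_i$; since the image of $\phi_n$ is contained in $\Delta_n$, the map $\phi_n$ remains a local embedding into $\Delta_n$ satisfying $\phi_n(s_i) = t_{n,i}$, so the first part of the lemma yields $(\Delta_n,\mathbf{T}_n)\to(\Gamma,\mathbf{S})$. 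Conversely, if $(\Gamma,\mathbf{S})$ lies in the closure, then the first part of the lemma supplies local embeddings of arbitrary balls $B_{\mathbf{S}}(r)$ into finite groups, so Proposition~\ref{LEFStandardProp} again gives that $\Gamma$ is LEF; the equivalence of ``some'' and ``any'' marking is immediate because LEF is an intrinsic property of the abstract group.
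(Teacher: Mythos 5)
Your proof is correct. The paper does not actually prove this lemma in-house --- it defers to \cite{VerGor}, Section 1.4 --- and your argument is the standard one: unpacking the metric on $\mathcal{G}_d$ as ``$\ker(\pi_{\mathbf{T}_n})$ and $\ker(\pi_{\mathbf{S}})$ agree on $B_{\mathbf{X}}(R)$ eventually,'' pushing lifts through $\pi_{\mathbf{T}_n}$ with the $2r/3r$ radius bookkeeping for one direction, and the prefix-by-prefix induction (using $\phi_n(e)=e$ and $\phi_n(g^{-1})=\phi_n(g)^{-1}$) for the other. The only steps left slightly implicit --- that the image of $\phi_n$ lands in $\Delta_n=\langle\phi_n(\mathbf{S})\rangle$ in the ``in particular'' clause (same prefix induction), and the degenerate case of a marking tuple with repeated entries $s_i=s_j$ (harmless, since $t_{n,i}=t_{n,j}$ for all large $n$ and the conclusion is only required eventually) --- are routine and do not affect correctness.
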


\begin{rmrk} \label{MarkedGrpsRmrk}
Let $\Gamma$ be a $d$-generated group. 
If $\Gamma$ is finitely presented then, 
for any $d$-marking $\mathbf{S}$ on $\Gamma$, 
$\ker (\pi_{\mathbf{S}})$ has a finite normal 
generating set, so there exists $C > 0$ such that, 
for any $N \vartriangleleft \mathbf{F}$, 
if $\nu \big( \ker (\pi_{\mathbf{S}}) ,N \big) \geq C$, 
then $\ker (\pi_{\mathbf{S}}) \leq N$. 
It follows that $(\Gamma,\mathbf{S})$ has 
an open neighbourhood in $\mathcal{G}_d$ consisting 
entirely of marked quotients of $(\Gamma,\mathbf{S})$. 
A slightly more general variant, which follows from the same argument, is: 
if $(\Gamma,\mathbf{S})$ and $(\hat{\Gamma},\hat{\mathbf{S}})$ 
are marked $d$-generated groups, 
with $\hat{\Gamma}$ finitely presented, 
and there is a quotient $(\hat{\Gamma},\hat{\mathbf{S}}) \twoheadrightarrow (\Gamma,\mathbf{S})$ of $d$-marked groups, 
then $(\Gamma,\mathbf{S})$ has an open neighbourhood in $\mathcal{G}_d$ consisting 
entirely of marked quotients of $(\hat{\Gamma},\hat{\mathbf{S}})$. 
\end{rmrk}

\begin{defn} \label{LiftDefn}
Given a sequence $(\Gamma_m)_m$ of $d$-generated 
groups, and given for each $m$ a $d$-marking 
$\mathbf{S}_m = (s_{m,1},\ldots,s_{m,d})$ 
on $\Gamma_m$, let: 
\begin{equation*}
\tilde{\mathbf{S}} = (\tilde{s}_1 , \ldots , \tilde{s}_d) \in \big( \prod_m \Gamma_m \big)^d
\end{equation*}
be given by 
$\tilde{s}_{i,m} = s_{m,i} \in \Gamma_m$. 
Let $\otimes (\Gamma_m,\mathbf{S}_m)$ be the subgroup of $\prod_m \Gamma_m$ 
generated by the set $\lbrace \tilde{s}_i : 1 \leq i \leq d \rbrace$ 
(so that $\tilde{\mathbf{S}}$ is a $d$-marking 
on $\otimes (\Gamma_m,\mathbf{S}_m)$). 
We shall refer to the group $\otimes (\Gamma_m,\mathbf{S}_m)$ 
as the \emph{diagonal product} of the sequence $(\Gamma_m,\mathbf{S}_m)_m$, 
and $\tilde{\mathbf{S}}$ as the \emph{diagonal $d$-marking}. 
\end{defn}

Note that the projection $p_n : \prod_m \Gamma_m \rightarrow \Gamma_n$ 
restricts to a quotient of $d$-marked groups 
$\big( \otimes (\Gamma_m,\mathbf{S}_m),\tilde{\mathbf{S}} \big) 
\twoheadrightarrow (\Gamma_n,\mathbf{S}_n)$. 

\begin{propn}[\cite{KaPa} Lemma 4.6] \label{KAPaLiftProp}
Suppose the sequence $(\Gamma_m,\mathbf{S}_m)_m$ 
converges in $\mathcal{G}_d$ to $(\Gamma,\mathbf{S})$. 
Then there is a quotient of $d$-marked groups: 
\begin{equation*} 
\tau : \big( \otimes (\Gamma_m,\mathbf{S}_m) , \tilde{\mathbf{S}} \big) \twoheadrightarrow (\Gamma, \mathbf{S}), 
\end{equation*}
called the \emph{tail homomorphism}, 
with kernel: 
\begin{equation*}
\ker(\tau) = \big( \otimes (\Gamma_m,\mathbf{S}_m) \big) \cap \big( \bigoplus_m \Gamma_m \big)
\end{equation*}
\end{propn}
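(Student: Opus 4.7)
The plan is to exploit the fact that convergence in $\mathcal{G}_d$ is equivalent to eventual agreement of the kernels $\ker(\pi_{\mathbf{S}_m})$ and $\ker(\pi_{\mathbf{S}})$ on each word-ball $B_{\mathbf{X}}(r)$ of $\mathbb{F}$. Every element of $\otimes(\Gamma_m,\mathbf{S}_m)$ is of the form $w(\tilde{\mathbf{S}})$ for some $w \in \mathbb{F}$, and $w(\tilde{\mathbf{S}}) = e$ in $\otimes(\Gamma_m,\mathbf{S}_m)$ if and only if $w \in \bigcap_m \ker(\pi_{\mathbf{S}_m})$. So the two claims (existence of $\tau$, and identification of $\ker\tau$) both reduce to a comparison of this intersection with $\ker(\pi_{\mathbf{S}})$ on successive balls.

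First I would construct $\tau$. The map $\pi_{\tilde{\mathbf{S}}} : \mathbb{F} \twoheadrightarrow \otimes(\Gamma_m,\mathbf{S}_m)$ has kernel $\bigcap_m \ker(\pi_{\mathbf{S}_m})$. Given $w$ in this intersection of length at most $r$, the convergence hypothesis provides $M = M(r)$ such that for every $m \geq M$ the balls $\ker(\pi_{\mathbf{S}_m}) \cap B_{\mathbf{X}}(r)$ and $\ker(\pi_{\mathbf{S}}) \cap B_{\mathbf{X}}(r)$ coincide; hence $w \in \ker(\pi_{\mathbf{S}})$. This inclusion $\ker(\pi_{\tilde{\mathbf{S}}}) \subseteq \ker(\pi_{\mathbf{S}})$ descends to the required quotient of $d$-marked groups $\tau$ sending $\tilde{s}_i$ to $s_i$.

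For the kernel, take $g = w(\tilde{\mathbf{S}}) \in \otimes(\Gamma_m,\mathbf{S}_m)$. Its $n$-th coordinate is $w(\mathbf{S}_n)$ and its image under $\tau$ is $w(\mathbf{S})$. For the inclusion $\supseteq$, if $g \in \bigoplus_m \Gamma_m$ then $w(\mathbf{S}_n) = e$ for all $n$ outside some finite set; choosing $n$ beyond that finite set and beyond $M(|w|)$ from the convergence, the eventual equality of balls forces $w \in \ker(\pi_{\mathbf{S}})$, so $\tau(g) = e$. For $\subseteq$, if $\tau(g) = e$ then $w \in \ker(\pi_{\mathbf{S}})$; again by the eventual equality of balls of radius $|w|$, we get $w \in \ker(\pi_{\mathbf{S}_n})$ for all $n \geq M(|w|)$, so $p_n(g) = e$ for all but finitely many $n$, i.e.\ $g \in \bigoplus_m \Gamma_m$.

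There is no genuine obstacle here: the argument is essentially a repackaging of the definition of the topology on $\mathcal{G}_d$, and the only subtlety is making sure that both directions of the kernel equality use the same cofinite-tail behaviour of the coordinates and the same uniformity in $r = |w|$ coming from convergence. The one point to be a little careful about is that the constant $M$ depends on the word-length $|w|$ rather than being uniform; this dependence is exactly what prevents $\ker(\tau)$ from being all of $\otimes(\Gamma_m,\mathbf{S}_m)$ and is precisely why $\ker(\tau)$ lands in the restricted direct sum rather than the full product.
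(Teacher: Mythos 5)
Your proof is correct: the paper itself imports this statement from \cite{KaPa} (Lemma 4.6) without reproducing a proof, and your argument is the standard one, unwinding the definition of the metric on $\mathcal{G}_d$ to get, for each radius $r$, a threshold $M(r)$ beyond which $\ker(\pi_{\mathbf{S}_m})$ and $\ker(\pi_{\mathbf{S}})$ agree on $B_{\mathbf{X}}(r)$, which yields both the inclusion $\bigcap_m \ker(\pi_{\mathbf{S}_m}) \subseteq \ker(\pi_{\mathbf{S}})$ (hence the existence of $\tau$) and the two inclusions identifying $\ker(\tau)$ with $\otimes(\Gamma_m,\mathbf{S}_m) \cap \bigoplus_m \Gamma_m$. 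Your closing remark correctly isolates the only delicate point, namely that $M$ depends on the word length $\lvert w \rvert$, which is exactly why the kernel is the restricted direct sum and nothing larger.
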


Since the class of amenable groups is closed under subgroups, 
extensions and ascending unions, we deduce the following, 
which will be used in the next Section. 

\begin{lem} \label{LiftAmenLem}
Let $(\Gamma_m,\mathbf{S}_m)_m$ and $(\Gamma,\mathbf{S})$ 
be as in Proposition \ref{KAPaLiftProp}. 
If $\Gamma$ and the $\Gamma_m$ are amenable, 
then so is $\otimes (\Gamma_m,\mathbf{S}_m)$. 
\end{lem}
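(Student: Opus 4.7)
The plan is to use the tail homomorphism from Proposition \ref{KAPaLiftProp} to express $\otimes (\Gamma_m,\mathbf{S}_m)$ as an extension of an amenable group by another amenable group, and then invoke the closure properties of the class of amenable groups mentioned immediately before the lemma statement.

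First, I would apply Proposition \ref{KAPaLiftProp} to obtain the short exact sequence
\begin{equation*}
1 \rightarrow \ker(\tau) \rightarrow \otimes (\Gamma_m,\mathbf{S}_m) \xrightarrow{\tau} \Gamma \rightarrow 1,
\end{equation*}
where $\ker(\tau) = \otimes(\Gamma_m,\mathbf{S}_m) \cap \bigoplus_m \Gamma_m$. The quotient $\Gamma$ is amenable by hypothesis, so by closure of amenability under group extensions it suffices to show that $\ker(\tau)$ is amenable.

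Next, since $\ker(\tau)$ is a subgroup of $\bigoplus_m \Gamma_m$, and amenability is inherited by subgroups, it suffices to show that $\bigoplus_m \Gamma_m$ is amenable. But the direct sum is the ascending union
\begin{equation*}
\bigoplus_m \Gamma_m = \bigcup_{n \geq 1} \Big( \prod_{m \leq n} \Gamma_m \Big),
\end{equation*}
where each finite product $\prod_{m \leq n} \Gamma_m$ is amenable (by a finite iteration of closure under extensions applied to the amenable groups $\Gamma_m$). Closure of amenability under ascending unions then gives that $\bigoplus_m \Gamma_m$ is amenable, whence so is $\ker(\tau)$, and finally so is $\otimes (\Gamma_m,\mathbf{S}_m)$.

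There is no genuine obstacle here: once the exact sequence from Proposition \ref{KAPaLiftProp} is in hand, the argument is a direct application of the three standard permanence properties (subgroups, extensions, ascending unions) recorded in the paragraph preceding the lemma. The only mildly delicate point is recognising that $\ker(\tau)$ need not itself be a direct sum of finitely many $\Gamma_m$, so one really does need the ascending-union step rather than merely extensions and subgroups applied to finite products.
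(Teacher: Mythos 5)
Your proof is correct and follows exactly the route the paper intends: it uses the tail homomorphism from Proposition \ref{KAPaLiftProp} to realise the diagonal product as an extension of the amenable group $\Gamma$ by a subgroup of $\bigoplus_m \Gamma_m$, and then applies the closure of amenability under subgroups, extensions and ascending unions, which is precisely the remark the paper makes just before stating the lemma. No issues.
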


The next Proposition gives a sufficient condition 
for local stability of a group $\Gamma$ 
in terms of local stability of a sequence of stable groups 
converging to $\Gamma$ in $\mathcal{G}_d$. 
The hypotheses of the condition are rather strong, 
but they are general enough to allow us to 
exhibit our first explicit example of 
a locally stable finitely generated group which is not stable. 

\begin{propn} \label{liftingalmosthomsprop}
Let $(\Gamma_m,\mathbf{S}_m)_m$ and $(\Gamma,\mathbf{S})$ 
be as in Proposition \ref{KAPaLiftProp}. 
Suppose that for all $m$, there is a quotient of $d$-marked groups 
$\pi^{(m)} : (\Gamma_m,\mathbf{S}_m) \rightarrow (\Gamma,\mathbf{S})$. 
If $\Gamma_m$ is locally stable for all $m$, then $\Gamma$ is locally stable. 
\end{propn}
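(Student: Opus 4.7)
My plan is to verify the equation-theoretic criterion from Proposition \ref{LocStabEqnProp}: I will show that the system $\lbrace r = e : r \in \ker(\pi) \rbrace$ is locally stable in permutations. Let $\pi_m : \mathbb{F} \rightarrow \Gamma_m$ denote the epimorphism determined by the marking $\mathbf{S}_m$. The hypothesis that $\pi^{(m)}$ is a quotient of $d$-marked groups yields the factorisation $\pi = \pi^{(m)} \circ \pi_m$, and hence the inclusion $\ker(\pi_m) \subseteq \ker(\pi)$ for every $m$. Combined with the convergence $(\Gamma_m,\mathbf{S}_m) \rightarrow (\Gamma,\mathbf{S})$ in $\mathcal{G}_d$, this gives the stronger statement: for every $r \in \mathbb{N}$, for all sufficiently large $m$, $\ker(\pi) \cap B_{\mathbf{X}}(r) \subseteq \ker(\pi_m)$.

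Given $\epsilon > 0$ and a finite $R_0 \subseteq \ker(\pi)$, I choose $r$ so that $R_0 \subseteq B_{\mathbf{X}}(r)$ and fix $m$ large enough to ensure $R_0 \subseteq \ker(\pi_m)$. Now I invoke the stability of $\Gamma_m$: by Lemma 3.12 of \cite{BeLuTh}, the system $\lbrace r = e : r \in \ker(\pi_m) \rbrace$ is stable in permutations, so for this $\epsilon$ there exist $\delta > 0$ and a finite $R_1 \subseteq \ker(\pi_m)$ such that every $\delta$-almost-solution $\overline{\sigma} \in \Sym(k)^S$ to $R_1$ is $\epsilon$-close to some solution $\overline{\tau}$ to the entire system $\lbrace r=e : r \in \ker(\pi_m) \rbrace$. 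Because $R_0 \subseteq \ker(\pi_m)$, such $\overline{\tau}$ automatically solves $R_0$, and because $R_1 \subseteq \ker(\pi_m) \subseteq \ker(\pi)$, the pair $(\delta, R_1)$ is admissible as witness data for the local-stability-in-permutations criterion applied to $\ker(\pi)$ at the challenge $(\epsilon, R_0)$.

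Conceptually, the key point is that the stability of $\Gamma_m$ supplies, for each $\epsilon$, a single pair $(\delta, R_1)$ that handles \emph{all} finite sub-challenges of $\ker(\pi_m)$ at once, whereas the local criterion for $\Gamma$ only demands a pair that handles a prescribed finite $R_0 \subseteq \ker(\pi)$. Convergence in $\mathcal{G}_d$ allows us to slide $R_0$ inside $\ker(\pi_m)$ by choosing $m = m(R_0)$ large, and the quotient hypothesis then ensures both that the produced witness $R_1$ lies back in $\ker(\pi)$ and that any solution to $\ker(\pi_m)$ in particular solves $R_0$. I do not foresee a genuine obstacle: the only delicate matter is interleaving the quantifiers for $\epsilon$, $R_0$ and $m$ correctly, and this interleaving is dictated unambiguously by the quotient and convergence hypotheses. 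Notably, neither the diagonal product construction of Proposition \ref{KAPaLiftProp} nor the amenability bookkeeping of Lemma \ref{LiftAmenLem} is needed for this particular implication.
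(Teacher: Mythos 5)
Your proposal is correct, but it takes a genuinely different route from the paper's. The paper argues directly at the level of almost-homomorphisms: given an almost-homomorphism $\phi_n$ of $\Gamma$, it pulls back along $\pi^{(m)}$ to an almost-homomorphism of $\Gamma_m$, uses stability of $\Gamma_m$ to replace it by genuine homomorphisms $\psi_n^{(m)}$, and then pushes these back to $\Gamma$ via set-theoretic sections $\theta^{(m)}$ of $\pi^{(m)}$ defined on balls $B_{\mathbf{S}}(r_m)$ (the bijectivity of $\pi^{(m)}$ on such balls coming from convergence in $\mathcal{G}_d$), concluding with a diagonal argument in which $m=m_n \rightarrow \infty$ sufficiently slowly. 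You instead verify the equation-theoretic criterion of Proposition \ref{LocStabEqnProp}: the quantifier structure of ``locally stable in permutations'' lets you fix the finite target set $R_0$ first and only then choose $m$ so large that $R_0 \subseteq \ker(\pi_{\mathbf{S}_m})$ (convergence in $\mathcal{G}_d$), while the marked-quotient hypothesis gives $\ker(\pi_{\mathbf{S}_m}) \subseteq \ker(\pi_{\mathbf{S}})$, so the witness pair $(\delta,R_1)$ furnished by stability of $\Gamma_m$ is admissible data for $\Gamma$; both inclusions are used and both are correctly derived. Your approach buys a cleaner argument: it replaces the paper's explicit diagonalization and the auxiliary maps $\theta^{(m)}$ by pure quantifier bookkeeping, and (as you note) makes transparent that neither the diagonal product nor amenability enters. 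The one point you pass over lightly is the precise form of ``stable system of equations'' you extract from Lemma 3.12 of \cite{BeLuTh}: you need that for each $\epsilon$ there exist a single $\delta>0$ and a \emph{finite} $R_1 \subseteq \ker(\pi_{\mathbf{S}_m})$ such that every $\delta$-almost-solution of $R_1$, in every $\Sym(k)$, is $\epsilon$-close to a solution of the entire system. If one starts from the sequential definition of stability (Definition \ref{LocalStabDefn} with genuine homomorphisms), this uniform finite-subsystem form follows by a short contradiction/compactness argument of exactly the kind used in the proof of Proposition \ref{LocStabEqnProp} (take $\delta=1/n$ and $R_1=\ker(\pi_{\mathbf{S}_m})\cap B_{\mathbf{X}}(n)$ and convert the failing tuples into an almost-homomorphism of $\Gamma_m$); it is standard, but worth spelling out rather than attributing wholesale to the cited lemma.
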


\begin{proof}
There is a sequence $(r_m)$ in $\mathbb{N}$, tending to $\infty$, 
such that the restriction of $\pi^{(m)}$ to $B_{\mathbf{S}_m} (r_m)$ 
is a bijection onto $B_{\mathbf{S}} (r_m)$. 
Define $\theta ^{(m)} : \Gamma \rightarrow \Gamma_m$ such that, 
for all $h \in B_{\mathbf{S}} (r_m)$, 
$h = (\pi^{(m)} \circ \theta ^{(m)} ) (h)$
(with $\theta ^{(m)} (h)$ defined arbitrarily for 
$h \in \Gamma \setminus B_{\mathbf{S}} (r_m)$). 
Then for all $g,h \in B_{\mathbf{S}} (r_m/2)$, 
$\theta ^{(m)} (gh) = \theta ^{(m)}(g)\theta ^{(m)}(h)$. 

%We shall produce in the course of the proof a sequence $(m_n)$ 
%in $\mathbb{N}$, tending to $\infty$, at a sufficiently 
%slow rate, to be determined in the course of the proof. 

Let $(\phi_n :\Gamma \rightarrow \Sym(k_n))_n$ be an almost-homomorphism. 
Then for any fixed $m$, $(\phi_n \circ \pi^{(m)}) : \Gamma_m \rightarrow \Sym(k_n)$ 
defines an almost-homomorphism, so by local stability, 
there exists a partial homomorphism 
$(\psi ^{(m)} _n : \Gamma_m \rightarrow \Sym(k_n))_n$ such that, 
for all $m$, and all $g \in \Gamma_m$, 
\begin{equation*}
d_{k_n} \big( \psi ^{(m)}(g),(\phi_n \circ \pi^{(m)})(g) \big) \rightarrow 0 \text{ as }m\rightarrow \infty 
\end{equation*}
Fix a sequence $(\epsilon_n)$ of positive reals, 
converging to $0$. 
Then there exists a sequence $(m_n)$ in $\mathbb{N}$, tending to $\infty$, 
such that: 
\begin{itemize}
\item[(i)] $\big( \psi_n ^{(m_n)} \circ \theta ^{(m_n)} : \Gamma \rightarrow\Sym (k_n) \big)_n$ is a partial homomorphism; 

\item[(ii)] For all $n \in \mathbb{N}$ and $g \in B_{\mathbf{S}_{m_n}} (1/\epsilon_n)$, 
\begin{equation*}
d_{k_n} \big( \psi ^{(m_n)}(g),(\phi_n \circ \pi^{(m_n)})(g) \big) < \epsilon_n. 
\end{equation*}
\end{itemize}
Now given $h\in \Gamma$, let $l > 0$ be such that $h \in B_{\mathbf{S}}(l)$. 
Then for all $n$ sufficiently large, $\min (1/\epsilon_n , r_{m_n} ) > l$ 
and there exists $g \in B_{\mathbf{S}_{m_n}} (l)$ with $h = \pi^{(m_n)}(g)$, 
so that $(\psi_n ^{(m_n)} \circ \theta ^{(m_n)})(h) = \psi_n ^{(m_n)} (g)$, and: 
\begin{align*}
d_{k_n} \big( \psi_n ^{(m_n)} \circ \theta ^{(m_n)})(h),\phi_n(h)  \big)
 = d_{k_n} \big( \psi ^{(m_n)}(g),(\phi_n \circ \pi^{(m_n)})(g) \big)< \epsilon_n
\end{align*}
so $\big( \psi_n ^{(m_n)} \circ \theta ^{(m_n)} \big)$ is close 
to $(\phi_n)$, as desired. 
\end{proof}

\begin{rmrk} \label{LimitQuotRmrk}
\normalfont
The hypothesis in Proposition \ref{liftingalmosthomsprop} that 
the convergence of $(\Gamma_m,\mathbf{S}_m)_m$ to $(\Gamma,\mathbf{S})$ 
in $\mathcal{G}_d$ is induced by a sequence of epimorphisms 
$\pi_m : \Gamma_m \rightarrow \Gamma$ cannot be entirely removed. 
For example consider the group described in the proof of Theorem \ref{AbelsGroupThm}: 
it is not locally stable, 
but it is residually finite, hence LEF, 
hence there are \emph{finite} marked groups $(\Gamma_m,\mathbf{S}_m)_m$ 
converging to $(\Gamma,\mathbf{S})$ in $\mathcal{G}_d$, 
and finite groups are stable by \cite{GlRi} Theorem 2. 
\end{rmrk}

Let $\FSym(\mathbb{Z}) \leq \Sym(\mathbb{Z})$ be the group of finitely supported 
permutations of the set $\mathbb{Z}$. 
$\FSym(\mathbb{Z})$ has a subgroup $\FAlt(\mathbb{Z})$ of index two, 
consisting of all finitely supported even permutations. 
Both $\FSym(\mathbb{Z})$ and $\FAlt(\mathbb{Z})$ are normal in $\Sym(\mathbb{Z})$. 
$\FAlt(\mathbb{Z})$ is the ascending union of the alternating groups 
on finite subsets of $\mathbb{Z}$; 
thus $\FAlt(\mathbb{Z})$ is an infinite simple group. 
Let $\rho : \mathbb{Z} \rightarrow \Sym(\mathbb{Z})$ 
denote the regular action of the additive group $\mathbb{Z}$; 
thus $\mathbb{Z}$ acts (through $\rho$) via conjugation on $\Sym(\mathbb{Z})$. 
By normality of $\FAlt(\mathbb{Z})$, we can form the semidirect product 
$\mathcal{A}(\mathbb{Z}) = \FAlt(\mathbb{Z}) \rtimes_{\rho} \mathbb{Z}$, 
called the \emph{alternating enrichment} of $\mathbb{Z}$. 

For $r \in \mathbb{N}$, 
let $\lBrack r \rBrack = \lbrace n \in \mathbb{Z} : \lvert n \rvert \leq r \rbrace$ 
and let $\alpha_r , \beta_r \in \Sym(\lBrack r \rBrack)$ 
be given by $\alpha_r =(-r \; 1-r \cdots r-1 \; r) , \beta_r = (-1 \; 0 \; 1)$. 
Then it is easily seen that $\langle \alpha_r,\beta_r \rangle = \Alt(\lBrack r \rBrack)$, 
and similarly we have the following. 

\begin{lem}
The group $\mathcal{A}(\mathbb{Z})$ is finitely generated by 
$\lbrace (\id_{\mathbb{Z}},1) , ((-1 \; 0 \; 1),0) \rbrace$. 
\end{lem}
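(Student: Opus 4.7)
The plan is to work inside $\Sym(\mathbb{Z})$ and let $H$ denote the subgroup generated by the shift $\rho(1) : n \mapsto n+1$ and the $3$-cycle $\sigma = (-1\; 0 \; 1)$. Since $\mathcal{A}(\mathbb{Z}) = \FAlt(\mathbb{Z}) \rtimes_\rho \mathbb{Z}$ is generated by $\rho(\mathbb{Z})$ together with $\FAlt(\mathbb{Z})$, it suffices to show that $H$ contains both of these subgroups. The $\rho(\mathbb{Z})$ part is immediate from the generator $\rho(1)$, so the work lies in proving $H \supseteq \FAlt(\mathbb{Z})$.

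The first step is a direct computation: for any $k \in \mathbb{Z}$, the conjugate $\rho(k) \sigma \rho(k)^{-1}$ is the $3$-cycle $(k-1 \; k \; k+1)$. Hence $H$ contains every \emph{consecutive} $3$-cycle $\tau_k := (k-1 \; k \; k+1)$ for $k \in \mathbb{Z}$.

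The key step is then to show that the $\tau_k$ generate $\FAlt(\mathbb{Z})$. I would invoke the classical fact that for each $n \geq 3$, the alternating group $\Alt(n)$ is generated by the consecutive $3$-cycles $(i \; i+1 \; i+2)$ for $1 \leq i \leq n-2$ (proved by a short induction expressing an arbitrary $3$-cycle in terms of consecutive ones). Transporting this statement to $\Alt(\lBrack r \rBrack) \cong \Alt(2r+1)$ yields that $\Alt(\lBrack r \rBrack) \leq H$ for every $r \in \mathbb{N}$. Taking the ascending union gives $\FAlt(\mathbb{Z}) = \bigcup_r \Alt(\lBrack r \rBrack) \leq H$, which completes the argument.

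There is no real obstacle here; the only mildly delicate point is keeping the conjugation computation in the first step straight (taking care with the fact that $\rho$ acts on $\FAlt(\mathbb{Z})$ via $\rho(k) f \rho(k)^{-1}$ rather than $f \circ \rho(k)$), and citing (or sketching) the standard generation statement for $\Alt(n)$ by consecutive $3$-cycles.
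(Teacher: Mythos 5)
Your proof is correct, and it is essentially the argument the paper has in mind: the paper offers no written proof, merely asserting the lemma ``similarly'' to the finite fact that $\langle\alpha_r,\beta_r\rangle=\Alt(\lBrack r\rBrack)$, which rests on the same conjugation trick you use. Your fleshed-out version --- conjugating $(-1\;0\;1)$ by powers of the shift to obtain all consecutive $3$-cycles, which generate each $\Alt(\lBrack r\rBrack)$ and hence $\FAlt(\mathbb{Z})$ as an ascending union --- correctly fills in the omitted details.
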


We shall prove: 

\begin{thm} \label{AltEnrLocStabThm}
$\mathcal{A}(\mathbb{Z})$ is locally stable but not weakly stable. 
\end{thm}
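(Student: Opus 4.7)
My plan is to treat the two assertions separately, invoking Theorem \ref{ArzPauThm} for the failure of weak stability and Proposition \ref{liftingalmosthomsprop} for local stability.

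\textbf{Failure of weak stability.} I first verify that $\mathcal{A}(\mathbb{Z})$ is amenable but not residually finite. The normal subgroup $\FAlt(\mathbb{Z}) = \bigcup_{m} \Alt(\lBrack m \rBrack)$ is locally finite, hence amenable; since amenability is preserved under extensions, $\mathcal{A}(\mathbb{Z}) = \FAlt(\mathbb{Z}) \rtimes \mathbb{Z}$ is amenable. For failure of residual finiteness: $\FAlt(\mathbb{Z})$ is the ascending union of the simple groups $\Alt(\lBrack m \rBrack)$ for $m \geq 2$, hence itself an infinite simple group. If $N \vartriangleleft \mathcal{A}(\mathbb{Z})$ has finite index, then $N \cap \FAlt(\mathbb{Z}) \vartriangleleft \FAlt(\mathbb{Z})$ has finite index in $\FAlt(\mathbb{Z})$, so by simplicity $N \cap \FAlt(\mathbb{Z}) = \FAlt(\mathbb{Z})$, giving $\FAlt(\mathbb{Z}) \leq N$. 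Thus $\FAlt(\mathbb{Z})$ lies in every finite-index normal subgroup of $\mathcal{A}(\mathbb{Z})$, which is therefore not residually finite. Theorem \ref{ArzPauThm} then implies that $\mathcal{A}(\mathbb{Z})$ is not weakly stable.

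\textbf{Local stability.} Here I apply Proposition \ref{liftingalmosthomsprop}. It suffices to exhibit a sequence of $2$-marked stable groups $(\Gamma_m, \mathbf{S}_m)_m$ converging in $\mathcal{G}_2$ to the standard $2$-marking $(\mathcal{A}(\mathbb{Z}), \mathbf{S})$ from the preceding Lemma, together with quotients of $2$-marked groups $\pi^{(m)} \colon (\Gamma_m, \mathbf{S}_m) \twoheadrightarrow (\mathcal{A}(\mathbb{Z}), \mathbf{S})$. These $\Gamma_m$ are to be drawn from a family of $2$-generated groups constructed by B.H. Neumann, whose stability is established by Levit-Lubotzky \cite{LeLu}. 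Once the $(\Gamma_m, \mathbf{S}_m)$ are in hand, Proposition \ref{liftingalmosthomsprop} concludes immediately, and the theorem follows by combining the two halves.

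\textbf{Main obstacle.} The principal difficulty is the explicit construction and verification for the $(\Gamma_m, \mathbf{S}_m)$: they must simultaneously (i) lie in the Neumann family covered by Levit-Lubotzky, so that stability is available; (ii) admit a marked surjection onto $(\mathcal{A}(\mathbb{Z}), \mathbf{S})$, which already forces each $\Gamma_m$ to be non-residually-finite (so we cannot simply use finite or polycyclic approximations); and (iii) realise the relations of $\mathcal{A}(\mathbb{Z})$ of word-length at most $r_m$ for some $r_m \to \infty$, while introducing no spurious short relations, so as to achieve the required convergence in $\mathcal{G}_2$. Reconciling (i)--(iii) amounts to fitting a concrete piece of Neumann's construction to the alternating enrichment, and is the only step of the argument beyond mechanically unpacking the cited results.
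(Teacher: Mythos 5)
Your first half (failure of weak stability) is correct and is exactly the paper's argument: $\mathcal{A}(\mathbb{Z})$ is amenable as an extension of the locally finite group $\FAlt(\mathbb{Z})$ by $\mathbb{Z}$, it is not residually finite because the infinite simple group $\FAlt(\mathbb{Z})$ lies in every finite-index normal subgroup, and Theorem \ref{ArzPauThm} applies. For the second half, however, you have only restated the strategy and explicitly deferred its substance: you never construct the marked stable groups $(\Gamma_m,\mathbf{S}_m)$, never produce the marked surjections $\pi^{(m)}$, and never verify convergence in $\mathcal{G}_2$. That deferred step is precisely the mathematical content of the paper's proof, so as written this is a genuine gap rather than a complete argument. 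Concretely, the paper fixes an increasing $r_0:\mathbb{N}\rightarrow\mathbb{N}_{\geq 2}$, sets $r_n(m)=r_0(m+n)$, and takes $\Gamma_n = G(r_n) = \otimes\big(\Alt(\lBrack r_n(m)\rBrack),\mathbf{T}(r_n(m))\big)$ with the diagonal marking: these are the Neumann groups, stable by Levit--Lubotzky; the marked surjection onto $(\mathcal{A}(\mathbb{Z}),\mathbf{S})$ is the tail homomorphism of Proposition \ref{KAPaLiftProp}, available because the individual marked alternating groups converge to $(\mathcal{A}(\mathbb{Z}),\mathbf{S})$ (Proposition \ref{AltEnriLEFProp}); and the remaining, nontrivial point is Proposition \ref{AltEnriDLProp}, that the shifted diagonal products $\big(G(r_n),\mathbf{S}_n\big)$ themselves converge to $(\mathcal{A}(\mathbb{Z}),\mathbf{S})$. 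The proof of that convergence is where the work lies: a reduced word $w$ killed by the tail homomorphism but surviving in $G(r_n)$ must land in $\bigoplus_k \Alt(\lBrack r_0(k+n)\rBrack)$, hence be nontrivial in some factor and trivial in another, contradicting the fact that for $n$ large all factor markings agree with $(\mathcal{A}(\mathbb{Z}),\mathbf{S})$ on a ball of radius exceeding the length of $w$. None of this is ``mechanical unpacking''; without it Proposition \ref{liftingalmosthomsprop} cannot be invoked.

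One further point in your discussion of the obstacle is incorrect and worth flagging: admitting a marked surjection onto $(\mathcal{A}(\mathbb{Z}),\mathbf{S})$ does \emph{not} force the approximating groups to be non-residually-finite, since residual finiteness is not inherited by quotients. Indeed the groups $G(r_n)$ used in the paper are subgroups of $\prod_m \Alt(\lBrack r_n(m)\rBrack)$ and hence residually finite, yet they surject onto $\mathcal{A}(\mathbb{Z})$ via the tail homomorphism; the correct (and much weaker) constraint is simply that the approximating groups must be infinite, which rules out using the finite alternating groups directly and is exactly why the diagonal products are introduced.
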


Let $\alpha_r , \beta_r \in \Alt(\lBrack r \rBrack)$ 
be as above, 
and let $\mathbf{T}(r) = (\alpha_r,\beta_r) \in \Alt ( \lBrack r \rBrack )^2$ 
(a $2$-marking on $\Alt ( \lBrack r \rBrack )$). 
Now let $r : \mathbb{N} \rightarrow \mathbb{N}_{\geq 2}$, with $r$ strictly increasing. 
We consider sequences of the form 
$\big( \Alt ( \lBrack r(n) \rBrack ) , \mathbf{T}(r(n)) \big)$ in $\mathcal{G}_2$. 

\begin{propn} \label{AltEnriLEFProp}
For any function $r : \mathbb{N} \rightarrow \mathbb{N}_{\geq 2}$ as above, 
$\big( \Alt ( \lBrack r(n) \rBrack ) , \mathbf{T} (r(n)) \big)$ converges in $\mathcal{G}_2$ 
to $(\mathcal{A}(\mathbb{Z}),\mathbf{S})$, 
where $\mathbf{S} = \big( (\id_{\mathbb{Z}},1) , ((-1 \; 0 \; 1),0) \big)$. 
\end{propn}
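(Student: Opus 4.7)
The plan is to verify convergence directly from the definition: for every $R \in \mathbb{N}$ I will show that $\ker(\pi_{\mathbf{T}(r(n))})$ and $\ker(\pi_{\mathbf{S}})$ coincide on the word-ball $B_{\mathbf{X}}(R) \subset \mathbb{F} = F(x_1,x_2)$ for all sufficiently large $n$, where $\pi_{\mathbf{S}} : \mathbb{F} \twoheadrightarrow \mathcal{A}(\mathbb{Z})$ and $\pi_{\mathbf{T}(r)} : \mathbb{F} \twoheadrightarrow \Alt(\lBrack r \rBrack)$ are the epimorphisms associated with the two markings. The key algebraic preparation is a normal form: by commuting all occurrences of $x_1$ to the right, every word $w \in \mathbb{F}$ of length $L \leq R$ can be rewritten \emph{as an identity in $\mathbb{F}$} in the form
\begin{equation*}
w = \left( \prod_{i=1}^{m} x_1^{b_i} x_2^{\epsilon_i} x_1^{-b_i} \right) x_1^{k},
\end{equation*}
where $\epsilon_i = \pm 1$, $k$ is the $x_1$-exponent sum of $w$, and $m, |b_i|, |k| \leq R$.

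Evaluating this decomposition in $\mathcal{A}(\mathbb{Z}) \leq \Sym(\mathbb{Z})$, the conjugate $s_1^{b}s_2 s_1^{-b}$ is the 3-cycle $(b-1\;b\;b+1)$, so $w(s_1,s_2) = \sigma \cdot \rho(k)$ for some $\sigma \in \FAlt(\mathbb{Z})$ supported in $\lBrack R+1\rBrack$, and $w \in \ker(\pi_{\mathbf{S}})$ iff $k = 0$ and $\sigma = \id$. The key local observation in $\Alt(\lBrack r\rBrack)$ is that, for $r \geq R+1$ and $|b_i| \leq R$, the orbit of $\{-1,0,1\}$ under $\alpha_r^{b_i}$ lies comfortably inside $\lBrack r\rBrack$ with no wrap-around of the big cycle $\alpha_r$, so $\alpha_r^{b_i} \beta_r \alpha_r^{-b_i} = (b_i - 1\;b_i\;b_i + 1)$ in $\Alt(\lBrack r\rBrack)$ as well. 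Hence $w(\alpha_r,\beta_r) = \tilde\sigma \cdot \alpha_r^{k}$ with $\tilde\sigma$ supported in $\lBrack R+1\rBrack$ and acting there exactly as $\sigma$.

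The final step is the comparison: $w(\alpha_r,\beta_r) = \id$ iff $\tilde\sigma = \alpha_r^{-k}$. For $r \geq R+2$ and $0 < |k| \leq R < 2r+1$, the permutation $\alpha_r^{-k}$ is a non-trivial power of the $(2r+1)$-cycle $\alpha_r$, hence has no fixed points in $\lBrack r\rBrack$; while $\tilde\sigma$ fixes $R+2 \in \lBrack r\rBrack$. So this equality forces $k = 0$ and $\tilde\sigma = \id$, equivalently $w \in \ker(\pi_{\mathbf{S}})$. Since $r(n) \to \infty$, we have $r(n) \geq R+2$ for all $n$ sufficiently large, and therefore $\ker(\pi_{\mathbf{T}(r(n))}) \cap B_{\mathbf{X}}(R) = \ker(\pi_{\mathbf{S}}) \cap B_{\mathbf{X}}(R)$ for such $n$, giving convergence in $\mathcal{G}_2$. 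The only mildly subtle point is this last step, which rules out a ``conspiratorial'' equality between a cyclic translation and a locally-supported product of 3-cycles inside the simple group $\Alt(\lBrack r\rBrack)$; the fixed-point count dispatches it without needing any finer structural information about $\Alt(\lBrack r\rBrack)$.
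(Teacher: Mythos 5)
Your proposal is correct. Note, however, that the paper does not actually argue this proposition: its ``proof'' is a one-line citation to Remark 5.4 of Mimura--Sako, so your write-up is a genuinely self-contained alternative. The route you take is the natural direct one: the free-group identity $w=\big(\prod_i x_1^{b_i}x_2^{\epsilon_i}x_1^{-b_i}\big)x_1^{k}$ with $m,|b_i|,|k|$ bounded by the word length is exactly the right normal form, and it reduces the comparison of kernels on $B_{\mathbf{X}}(R)$ to two elementary facts: (a) for $r\geq R+1$ there is no wrap-around, so $\alpha_r^{b}\beta_r\alpha_r^{-b}$ is literally the same $3$-cycle $(b-1\;b\;b+1)$ as $s_1^{b}s_2s_1^{-b}$, and (b) a nontrivial power $\alpha_r^{-k}$ of the full $(2r+1)$-cycle is fixed-point-free, so for $r\geq R+2$ it cannot coincide with a permutation supported in $\lBrack R+1\rBrack$; this forces $k=0$ and $\tilde\sigma=\id$, and since $\tilde\sigma$ and $\sigma$ are the same product of the same $3$-cycles, membership in the two kernels agrees on $B_{\mathbf{X}}(R)$ once $r(n)\geq R+2$. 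What your argument buys over the citation is self-containedness and an explicit quantitative threshold ($r\geq R+2$ suffices for agreement on the $R$-ball), in the same spirit as the quantitative inequality (\ref{AltCloseIneq}) used later in the proof of Proposition \ref{AltEnriDLProp}. The only point worth flagging is a convention check rather than a gap: you implicitly take $\rho(1)$ and $\alpha_r$ to shift in the same direction (both $n\mapsto n+1$ away from the wrap-around), which is the intended reading of the paper's definitions; with that convention fixed, every step is sound.
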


\begin{proof}
This is covered, for example, in Remark 5.4 
of \cite{MimSak}. 
\end{proof}

The groups $G(r) = \otimes \big( \Alt ( \lBrack r(n) \rBrack ) , \mathbf{T}(r(n)) \big)$ 
were originally studied by B.H. Neumann \cite{Neum}, 
who proved that different functions $r$ as above yield nonisomorphic groups $G(r)$. 
Our interest here in these groups stems from the following, 
which is the main result of \cite{LeLu}. 

\begin{thm}
For any increasing function 
$r : \mathbb{N} \rightarrow \mathbb{N}_{\geq 2}$, 
$G(r)$ is stable. 
\end{thm}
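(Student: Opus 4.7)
The plan is to apply the Becker--Lubotzky--Thom criterion (Theorem 1.3 of \cite{BeLuTh}): a finitely generated amenable group is stable if and only if every invariant random subgroup is a weak-$\ast$ limit of atomic IRSs supported on finite-index subgroups. I must therefore first confirm that $G(r)$ is amenable, and then verify this IRS approximation property.

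Amenability is almost free: Proposition~\ref{AltEnriLEFProp} shows that $(\Alt(\lBrack r(n)\rBrack),\mathbf{T}(r(n)))$ converges in $\mathcal{G}_2$ to $(\mathcal{A}(\mathbb{Z}),\mathbf{S})$, and $\mathcal{A}(\mathbb{Z})=\FAlt(\mathbb{Z})\rtimes\mathbb{Z}$ is amenable as an extension of the locally finite group $\FAlt(\mathbb{Z})$ by $\mathbb{Z}$, so Lemma~\ref{LiftAmenLem} gives amenability of $G(r)$.

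For the IRS step, I would analyse the structure of $G(r)$ through the tail homomorphism $\tau:G(r)\twoheadrightarrow\mathcal{A}(\mathbb{Z})$ of Proposition~\ref{KAPaLiftProp}, whose kernel $K=G(r)\cap\bigoplus_n\Alt(\lBrack r(n)\rBrack)$ is a subdirect product of the finite simple groups $\Alt(\lBrack r(n)\rBrack)$. A commutator calculation with the diagonal generators $\tilde\alpha,\tilde\beta$, combined with the strict monotonicity of $r$, should show that $K$ coincides with the full restricted direct sum: suitable words in $\tilde\alpha^k\tilde\beta\tilde\alpha^{-k}$ produce elements supported on a single factor at a time, and 3-cycles generate each $\Alt(\lBrack r(n)\rBrack)$. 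Since each factor is simple, the normal subgroups of $K$ are indexed by subsets of $\mathbb{N}$, so any IRS of $G(r)$ decomposes into the support data $H\mapsto I(H):=\{n : H\cap\Alt(\lBrack r(n)\rBrack)\neq 1\}$, together with a compatible IRS of $\mathcal{A}(\mathbb{Z})$ obtained as the $\tau$-pushforward, the two pieces glued by the $\tau^{-1}(\mathbb{Z})$-action on the factors.

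The natural approximating atomic IRSs arise from the finite-index normal kernels $K_M=\ker(G(r)\to\prod_{n\leq M}\Alt(\lBrack r(n)\rBrack))$: truncating an IRS past index $M$ and realising the bounded part inside the finite quotient $\prod_{n\leq M}\Alt(\lBrack r(n)\rBrack)$ should yield finite-index atomic IRSs converging weak-$\ast$ to the original as $M\to\infty$. The main obstacle will be the $\mathcal{A}(\mathbb{Z})$-component: because $\mathcal{A}(\mathbb{Z})$ is not residually finite ($\FAlt(\mathbb{Z})$ is simple and lies in every finite-index normal subgroup), Theorem~\ref{ArzPauThm} tells us its own IRSs cannot all be approximated by finite-index atoms. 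The saving grace — and the key point any successful proof must exploit — is that a finite-index subgroup of $G(r)$ need not project to a finite-index subgroup of $\mathcal{A}(\mathbb{Z})$ under $\tau$ (indeed $\tau(K_M)=1$), so there is enough slack to approximate arbitrary IRSs by forgetting the $\mathcal{A}(\mathbb{Z})$-level data below scale $M$ and recovering it purely via the $\mathbb{Z}$-action on tail factors. I expect the hard step to reduce to the combinatorial statement that every $\mathbb{Z}$-invariant probability measure on $\prod_n\Sub(\Alt(\lBrack r(n)\rBrack))$ is a weak-$\ast$ limit of measures supported on finite orbits, which should be tractable because each factor already carries only a finite $\mathbb{Z}$-orbit under the cyclic action of $\tilde\alpha$.
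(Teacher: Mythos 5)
You should first note that the paper contains no proof of this statement at all: it is quoted verbatim as the main result of \cite{LeLu}, so the only comparison available is with the Levit--Lubotzky argument itself. Your top-level frame (amenability of $G(r)$ via Lemma \ref{LiftAmenLem}, then the Becker--Lubotzky--Thom criterion from \cite{BeLuTh}) does coincide with their strategy, and the assertion that $\ker\tau$ is the full restricted direct sum $\bigoplus_n\Alt(\lBrack r(n)\rBrack)$ is a correct (classical) point that your commutator sketch could be made to prove. But two of your intermediate claims are genuinely wrong. First, $\tau(K_M)\neq 1$: since $\bigoplus_{n\leq M}\Alt(\lBrack r(n)\rBrack)\leq G(r)$, any $g\in G(r)$ can be corrected in its first $M$ coordinates by an element of $\ker\tau$, so $K_M$ still surjects onto $\mathcal{A}(\mathbb{Z})$; more generally, the image under the surjection $\tau$ of any finite-index subgroup of $G(r)$ is a finite-index subgroup of $\mathcal{A}(\mathbb{Z})$. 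Thus the ``slack'' you invoke does not exist in the form you state. The reason there is no conflict with Theorem \ref{ArzPauThm} is rather that $H\mapsto\tau(H)$ is not Chabauty-continuous, so weak-$\ast$ limits of finite-index IRSs of $G(r)$ need not push forward to limits of finite-index IRSs of $\mathcal{A}(\mathbb{Z})$.

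Second, the proposed decomposition of an arbitrary IRS of $G(r)$ into the ``support data'' $I(H)$ plus a $\tau$-pushforward does not work: the classification of \emph{normal} subgroups of $K\cong\bigoplus_n\Alt(\lBrack r(n)\rBrack)$ is irrelevant, because the subgroups in the support of an IRS are not normal, and $H\cap\Alt(\lBrack r(n)\rBrack)$ can be an arbitrary subgroup of the finite alternating group (a point stabilizer, a setwise stabilizer, a single $3$-cycle's span, and so on). The genuinely hard ergodic IRSs are exactly of this kind --- Vershik-type stabilizers of random colourings, as recalled in Example \ref{AltEnrichIRSEx} and described in Section 4 of \cite{LeLu} --- and your scheme does not see them. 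Relatedly, your closing reduction to ``every $\mathbb{Z}$-invariant measure on $\prod_n\Sub(\Alt(\lBrack r(n)\rBrack))$ is a limit of measures with finite orbits'' is not the statement you need: co-soficity requires approximation by atomic IRSs supported on \emph{finite-index subgroups of $G(r)$}, invariant under conjugation by all of $G(r)$, not merely finiteness of orbits under the shift by $\tilde\alpha$. So while the skeleton matches the actual proof, the heart of it --- showing that every IRS of $G(r)$, in particular the colouring-stabilizer ones, is a weak-$\ast$ limit of finite-index IRSs --- is missing, and that is precisely the technical content of \cite{LeLu}.
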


Let $\tilde{\mathbf{S}}(r)\in G(r)^2=\otimes \big( \Alt ( \lBrack r(n) \rBrack ),\mathbf{T}(r(n)) \big)^2$ 
be the diagonal $2$-marking described in Definition \ref{LiftDefn}. 
We now fix an increasing function $r_0 : \mathbb{N} \rightarrow \mathbb{N}_{\geq 2}$. 
For $n \in \mathbb{N}$ we define $r_n (m) = r_0 (m+n)$. 
The key to Theorem \ref{AltEnrLocStabThm} is the following observation. 

\begin{propn} \label{AltEnriDLProp}
Write $\mathbf{S}_n = \tilde{\mathbf{S}}(r_n)$. 
The sequence $\big( G(r_n) , \mathbf{S}_n  \big)_n$ converges 
in $\mathcal{G}_2$ to $(\mathcal{A}(\mathbb{Z}),\mathbf{S})$, 
where $\mathbf{S}$ is as in Proposition \ref{AltEnriLEFProp}. 
\end{propn}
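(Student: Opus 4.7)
The plan is to work directly with normal subgroups of $\mathbb{F}_2 = F(\mathbf{X})$, since a point of $\mathcal{G}_2$ is the kernel of the corresponding marked epimorphism. Write $N := \ker(\pi_{\mathbf{S}}) \vartriangleleft \mathbb{F}_2$ and, for each integer $k \geq 2$, $N_k := \ker(\pi_{\mathbf{T}(k)})$. Proposition \ref{AltEnriLEFProp} applied with the strictly increasing function $r_0$ asserts that $N_{r_0(k)} \to N$ in $\mathcal{G}_2$; equivalently, for each $R \in \mathbb{N}$ there is some $K(R)$ such that
\begin{equation*}
N_{r_0(k)} \cap B_{\mathbf{X}}(R) = N \cap B_{\mathbf{X}}(R)
\end{equation*}
whenever $k \geq K(R)$.

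Next I would identify the kernel $M_n := \ker(\pi_{\mathbf{S}_n})$ of the marked epimorphism onto $(G(r_n),\mathbf{S}_n)$. By the construction of the diagonal product, the $i$-th diagonal generator $\tilde{s}_{n,i}$ has $m$-th coordinate equal to the $i$-th entry of $\mathbf{T}(r_0(m+n))$, so a word $w \in \mathbb{F}_2$ evaluates to the identity in $G(r_n)$ precisely when it does so in every factor. Hence
\begin{equation*}
M_n = \bigcap_{m \geq 0} N_{r_0(m+n)} = \bigcap_{k \geq n} N_{r_0(k)}.
\end{equation*}

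To finish I would verify the convergence $M_n \to N$ in $\mathcal{G}_2$ by checking ball-by-ball equality of kernels. Fix $R \in \mathbb{N}$ and pick $n \geq K(R)$. The tail homomorphism of Proposition \ref{KAPaLiftProp} furnishes a quotient of $2$-marked groups $(G(r_n),\mathbf{S}_n) \twoheadrightarrow (\mathcal{A}(\mathbb{Z}),\mathbf{S})$, whence $M_n \subseteq N$ and in particular $M_n \cap B_{\mathbf{X}}(R) \subseteq N \cap B_{\mathbf{X}}(R)$. Conversely, any $w \in N \cap B_{\mathbf{X}}(R)$ lies in $N_{r_0(k)}$ for every $k \geq n \geq K(R)$ by the choice of $K(R)$, hence $w \in M_n$ by the description of $M_n$ above. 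Combining, $M_n \cap B_{\mathbf{X}}(R) = N \cap B_{\mathbf{X}}(R)$ for all $n \geq K(R)$, which is exactly the assertion that $M_n \to N$ in $\mathcal{G}_2$.

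The argument is a short diagonal-type chase; the only substantive ingredient is Proposition \ref{AltEnriLEFProp} together with the monotonicity of $r_0$, which is what allows the shifted sequences $r_n$ to pass arbitrarily far into the regime where $N_{r_0(k)}$ agrees with $N$ on the relevant ball. I do not foresee a genuine obstacle beyond keeping the indexing (index $m$ inside the product versus index $k = m+n$ in $r_0$) straight.
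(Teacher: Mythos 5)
Your argument is correct, and it takes a somewhat different (and more streamlined) route than the paper. The paper proves the proposition by contradiction in the metric on $\mathcal{G}_2$: it first uses Proposition \ref{AltEnriLEFProp} plus the triangle inequality to show that any two of the finite marked factors with large index agree on the $K$-ball, then supposes the convergence estimate fails, extracts a short word $w$ with $e \neq \pi_{\mathbf{S}_n}(w) \in \ker(\tau_n)$, and invokes the kernel description $\ker(\tau_n) = \otimes(\cdot) \cap \bigoplus_k \Alt(r_0(k+n))$ from Proposition \ref{KAPaLiftProp} to find one factor where $w$ dies and another where it survives, contradicting the pairwise agreement of factors. You instead work directly with points of $\mathcal{G}_2$ as kernels: the observation $\ker(\pi_{\mathbf{S}_n}) = \bigcap_{k \geq n} \ker(\pi_{\mathbf{T}(r_0(k))})$ (evaluation in the diagonal product is coordinatewise) lets you compare each factor kernel to $N = \ker(\pi_{\mathbf{S}})$ directly via Proposition \ref{AltEnriLEFProp}, giving the inclusion $N \cap B_{\mathbf{X}}(R) \subseteq M_n$ for $n \geq K(R)$, while the reverse inclusion comes from the mere existence of the tail homomorphism (whose kernel formula you never need); indeed you could even drop the tail homomorphism, since $M_n \cap B_{\mathbf{X}}(R) \subseteq N_{r_0(n)} \cap B_{\mathbf{X}}(R) = N \cap B_{\mathbf{X}}(R)$ already for $n \geq K(R)$. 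What your approach buys is a direct, contradiction-free verification that uses only the convergence of the factors and the intersection formula for the diagonal kernel; what the paper's buys is that it stays entirely within the machinery it has set up (the metric $d$ and the full statement of Proposition \ref{KAPaLiftProp}), at the cost of a slightly more roundabout argument. The only point to keep straight in your write-up is the one you flag yourself: the existence of $\tau_n$ requires Proposition \ref{AltEnriLEFProp} applied to the shifted function $r_n$, which holds since $r_n$ is again strictly increasing with values at least $2$.
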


\begin{proof}[Proof of Theorem \ref{AltEnrLocStabThm}]
$\FAlt(\mathbb{Z})$ is locally finite, so $\mathcal{A}(\mathbb{Z})$ is amenable 
(being the extension of a locally finite group by an abelian group). 
On the other hand, $\FAlt(\mathbb{Z})$ is an infinite simple group, 
hence $\mathcal{A}(\mathbb{Z})$ is not residually finite. 
By Theorem \ref{ArzPauThm} 
$\mathcal{A}(\mathbb{Z})$ is not weakly stable. 

For fixed $n$, Proposition \ref{AltEnriLEFProp} guarantees that 
$\big( \Alt ( \lBrack r_n(m) \rBrack ) , \mathbf{T}(r_n(m)) \big)_m$ converges in $\mathcal{G}_2$ 
to $(\mathcal{A}(\mathbb{Z}),\mathbf{S})$. 
By Proposition \ref{KAPaLiftProp} we have the tail homomorphism 
$\tau_n : (G(r_n),\mathbf{S}_n) \twoheadrightarrow (\mathcal{A}(\mathbb{Z}),\mathbf{S})$. 
By Proposition \ref{AltEnriDLProp} we see that the sequence $\pi^{(n)}=\tau_n$ 
satisfies the conditions of Proposition \ref{liftingalmosthomsprop}, 
and we conclude that $\mathcal{A}(\mathbb{Z})$ is locally stable. 
\end{proof}

\begin{proof}[Proof of Proposition \ref{AltEnriDLProp}]
Let $K \in \mathbb{N}$, so that by Proposition \ref{AltEnriLEFProp} there exists 
$M \in \mathbb{N}$ be such that, for all $m \geq M$, 
\begin{center}
$d \Big( \big( \Alt ( r_0(m) ) , \mathbf{T}(r_0(m)) \big) , \big( \mathcal{A}(\mathbb{Z}),\mathbf{S}\big) \Big) \leq 2^{-(K+1)}$
\end{center}
so that for any $l,m \geq M$, 
\begin{equation} \label{AltCloseIneq}
d \Big( \big( \Alt ( r_0(l) ) , \mathbf{T}(r_0(l)) \big) , 
\big( \Alt ( r_0(m) ) , \mathbf{T}(r_0(m)) \big) \Big) \leq 2^{-K}
\end{equation}
by the triangle inequality. 
Let $n \geq N$ and claim: 
\begin{equation} \label{LiftConvClaim}
d \Big( \big( G(r_n),\mathbf{S}_n \big) , \big( \mathcal{A}(\mathbb{Z}),\mathbf{S}\big) \Big) \leq 2^{-K}
\end{equation}
which yields the result. Let: 
\begin{center}
$\tau_n : \big( G(r_n),\mathbf{S}_n \big) \twoheadrightarrow \big( \mathcal{A}(\mathbb{Z}),\mathbf{S}\big)$
\end{center}
be the tail homomorphism described in Proposition \ref{KAPaLiftProp}. 
Since $\tau_n \circ \pi_{\mathbf{S}_n} = \pi_{\mathbf{S}}$, 
if (\ref{LiftConvClaim}) fails, 
then there exists a non-trivial reduced word $w \in F(\mathbf{X})$ 
such that $e \neq \pi_{\mathbf{S}_n} (w) \in \ker (\tau_n)$. 
For $k \in \mathbb{N}$ let: 
\begin{center}
$p_k : \big( G(r_n),\mathbf{S}_n \big) \twoheadrightarrow 
\big( \Alt ( r_n(k) ) , \mathbf{T}(r_n(k)) \big)
= \big( \Alt ( r_0(k+n) ) ,\mathbf{T}(r_0(k+n)) \big)$
\end{center}
by projection onto the $k^{th}$ factor. 
By the conclusion of Proposition \ref{KAPaLiftProp}, 
$\pi_{\mathbf{S}_n} (w) \in \bigoplus_k \Alt ( r_0(k+n) )$
so there exist $l,m \in\mathbb{N}$ such that: 
\begin{center}
$\pi_{\mathbf{T}(r(n+l))} (w) = (p_l \circ \pi_{\mathbf{S}_n}) (w) \neq e$ but 
$\pi_{\mathbf{T}(r(n+m))} (w) = (p_m \circ \pi_{\mathbf{S}_n}) (w) = e$. 
\end{center}
It follows that:  
\begin{center}
$d \Big( \big( \Alt ( r_0(n+l) ) , \mathbf{T}(r_0(n+l)) \big) , 
\big( \Alt ( r_0(n+m) ) , \mathbf{T}(r_0(n+m)) \big) \Big) > 2^{-K}$
\end{center}
contradicting (\ref{AltCloseIneq}). 
\end{proof}

\begin{rmrk}
\normalfont
One may also define the \emph{symmetric enrichment} 
$\mathcal{S}(\mathbb{Z}) = \FSym(\mathbb{Z}) \rtimes_{\rho} \mathbb{Z}$, 
in which $\mathcal{A}(\mathbb{Z})$ sits as a subgroup of index $2$. 
We do not know whether local stability is preserved under commensurability 
in general, so one cannot deduce local stability for $\mathcal{S}(\mathbb{Z})$ 
directly from Theorem \ref{AltEnrLocStabThm}. 
That said, one can also prove that $\mathcal{S}(\mathbb{Z})$ is locally stable, 
along the lines of the argument sketched in Example \ref{AltEnrichIRSEx} below. 

Further, much as in Proposition \ref{AltEnriLEFProp}, 
one may construct a sequence of $2$-markings of finite symmetric groups 
converging in $\mathcal{G}_2$ to (a $2$-marking of) $\mathcal{S}(\mathbb{Z})$, 
and thereby produce a family of diagonal product groups admitting 
epimorphisms onto $\mathcal{S}(\mathbb{Z})$. 
We expect that these diagonal product groups are also stable groups, 
and that their stability may be proved using the arguments of \cite{LeLu}. 
\end{rmrk}

\section{Invariant random subgroups} \label{IRSSect}

For $\Gamma$ a countable group, 
let $\Sub(\Gamma)$ be the space of all subgroups of $\Gamma$, 
a closed subspace of the space $\lbrace 0,1 \rbrace^{\Gamma}$ 
of subsets of $\Gamma$ (equipped with the Tychonoff topology). 
The group $\Gamma$ admits a continuous action on $\Sub(\Gamma)$ 
by conjugation, which induces an action on the space $\Prob(\Gamma)$ 
of Borel probability measures on $\Sub(\Gamma)$. 
An \emph{invariant random subgroup (IRS)} of $\Gamma$ is by definition 
a fixed point of the action of $\Gamma$ on $\Prob(\Gamma)$. 

\begin{ex} \label{PtMassEx}
For $H \leq \Gamma$, $\delta_H \in \Prob (\Gamma)$ 
denotes the point mass on the subgroup $H$. 
The measure $\delta_H$ is an IRS iff $H$ is normal 
in $\Gamma$. 
More generally, for $H_1 , \ldots , H_n \leq \Gamma$ 
distinct subgroups 
and $\lambda_1 , \ldots , \lambda_n \in [0,1]$, 
with $\sum_i \lambda_i = 1$, we have a probability 
measure $\sum_i \lambda_i\delta_{H_i}\in\Prob(\Gamma)$. 
The latter is an IRS iff the $H_i$ form 
a union of whole conjugacy classes of subgroups, 
and $\lambda_i=\lambda_j$ whenever 
$H_i$ and $H_j$ are conjugate in $\Gamma$. 
\end{ex}

We denote by $\IRS(\Gamma)$ the set of all IRSs of $\Gamma$; 
it is a compact metrizable space under the weak$^{\ast}$ topology. 
If $\Gamma$ is finitely generated by the set $S$, 
$r \in \mathbb{N}$ and $W \subseteq \Gamma$, we write: 
\begin{center}
$C_{r,W} = \lbrace H \leq \Gamma 
: H \cap B_S (r) = W \cap B_S (r) \rbrace$, 
\end{center}
a clopen subset of $\Sub(\Gamma)$. 
For $\mu , \mu_n \in \IRS (\Gamma)$, 
we have the following useful criterion for convergence 
in the weak$^{\ast}$ topology: 
\begin{equation} \label{W*ConvCrit}
\mu_n \rightarrow \mu \text{ in } \IRS(\Gamma) \text{ iff } \mu_n (C_{r,W}) \rightarrow \mu (C_{r,W}) 
\text{ for all } r \in \mathbb{N} \text{ and } W \subseteq B_S (r).
\end{equation}

It is clear that $\IRS (\Gamma)$ forms a closed 
and convex subspace of $\Prob (\Gamma)$. 
An IRS $\mu$ of $\Gamma$ is \emph{ergodic} if, 
for any $\mu_1 , \mu_2 \in \IRS(\Gamma)$ 
and $t \in (0,1)$, if $\mu = t \mu_1 + (1-t) \mu_2$ 
then $\mu_1 = \mu_2$. 
That is, the ergodic IRSs are precisely the 
extreme points of $\Gamma$. 
Thus, the only closed convex subspace of 
$\IRS(\Gamma)$ containing all ergodic IRSs of $\Gamma$ 
is $\IRS(\Gamma)$ itself. 

As in the previous Section, fix an ordered basis $\mathbf{X}$ 
for the rank-$d$ free group $\mathbb{F}=F(\mathbf{X})$. 
If $\Gamma$ is $d$-generated, then a $d$-marking $\mathbf{S}$ on $\Gamma$ 
induces an embedding of $\IRS(\Gamma)$ into $\IRS (\mathbb{F})$: 
an IRS of $\mathbb{F}$ lies in $\IRS(\Gamma)$ iff 
it is supported on subgroups of $\mathbb{F}$ containing 
$\ker (\pi_{\mathbf{S}} :\mathbb{F}\rightarrow\Gamma)$, where as above, $\pi_{\mathbf{S}}$ denotes 
the epimorphism of $d$-marked groups 
$(\mathbb{F},\mathbf{X}) \twoheadrightarrow (\Gamma,\mathbf{S})$. 
We write $\IRS(\Gamma,\mathbf{S})$ to specify this embedded copy of $\IRS(\Gamma)$ 
inside $\IRS (\mathbb{F})$. 
An IRS of $\Gamma$ is called \emph{finite-index} 
if it is an atomic probability measure 
supported on finite-index subgroups of $\Gamma$, 
and is called \emph{cosofic} in $\Gamma$ if it is 
the weak$^{\ast}$-limit of finite-index IRSs of $\Gamma$. 
Note that if 
$\mu \in \IRS(\Gamma,\mathbf{S}) 
\subseteq \IRS(\mathbb{F},\mathbf{X})$ 
then $\mu$ may a priori be cosofic in $\mathbb{F}$ 
but not cosofic in $\Gamma$. 

For $X$ any $\Gamma$-set, 
there is a map $\Stab : X \rightarrow \Sub(\Gamma)$ 
sending a point $x \in X$ to its stabilizer in $\Gamma$. 
If $X$ is a standard Borel space, 
and the action of $\Gamma$ on $X$ is Borel, 
then $\Stab$ is a Borel map. 
Thus, given a Borel regular 
probability measure $\mu$ on $X$, 
there is a pushforward measure 
$\Stab_{\ast}(\mu) \in \Prob (\Gamma)$ 
on $\Sub(\Gamma)$. 
If $\mu$ is $\Gamma$-invariant 
(that is, if the action of $\Gamma$ on $(X,\mu)$ 
is a pmp action), 
then $\Stab_{\ast}(\mu)$ is an IRS of $\Gamma$. 
Although we shall not use the fact in the sequel, 
it in fact transpires that \emph{every} IRS 
can be produced in this way 
(see Proposition 1.4 of \cite{AbGlVi}). 
In the special case that $X$ is a finite discrete 
$\Gamma$-set, the IRS induced in this way 
is a finitely-supported finite-index IRS of $\Gamma$. 

\begin{defn}
Let $X$ be a finite $\Gamma$-set, 
and let $\nu$ be the 
uniform probability measure on $X$ 
(so that $\nu$ is $\Gamma$-invariant). 
By the \emph{IRS associated to $X$} 
we shall mean $\Stab_{\ast}(\nu) \in \IRS(\Gamma)$. 
We call the sequence $(X_n)$ of finite 
$\mathbb{F}$-sets \emph{convergent} if 
the sequence of IRSs associated to the $X_n$ 
converges in $\IRS (\mathbb{F})$. 
\end{defn}

\begin{rmrk} \label{BiggerSetRmrk}
If $X$ and $Y$ are finite $\Gamma$-sets 
with associated IRSs $\mu$ and $\nu$, respectively, 
then $X \sqcup Y$ is a finite $\Gamma$-set 
with associated IRS: 
\begin{equation*}
(\frac{\lvert X \rvert}{\lvert X \rvert + \lvert Y \rvert}) \mu 
+ (\frac{\lvert Y \rvert}{\lvert X \rvert + \lvert Y \rvert}) \nu. 
\end{equation*}
Repeatedly applying this observation, 
we have that for any finite $\Gamma$-set $X$ 
and any $N > 0$, there is a finite 
$\Gamma$-set $X'$ with the same associated IRS as $X$, 
and $\lvert X' \rvert \geq N$. 
\end{rmrk}

Generalizing Remark \ref{BiggerSetRmrk}, 
we have the following slight variation of 
Lemma 7.6 of \cite{BeLuTh}, 
which will be needed in the proof of Theorem 
\ref{IRSMainThm} below. 

\begin{lem} \label{BiggerSetLem}
Let $(\Delta_n , \mathbf{T}_n)$ be a sequence of 
marked $d$-generated groups and 
let $Y_n$ be a finite $\Delta_n$-set, 
with associated IRS $\nu_n$. 
Suppose that $\lvert Y_n \rvert \rightarrow \infty$ 
as $n \rightarrow \infty$ and 
that $(\nu_n)$ converges to 
$\nu \in \IRS(\mathbb{F},\mathbf{X})$. 
Then for any sequence $(m_k)$ of positive integers 
satisfying $m_k \rightarrow \infty$ 
as $k \rightarrow \infty$, 
there exists an unbounded nondecreasing 
function $f : \mathbb{N} \rightarrow \mathbb{N}$ 
and finite $\Delta_{f(k)}$-sets $Y_k '$ 
with associated IRSs $\nu_k '$, 
such that $\lvert Y_k ' \rvert = m_k$ for all $k$, 
and $(\nu_k ')$ converges to $\nu$ also. 
\end{lem}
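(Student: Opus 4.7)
The plan is to build each $Y_k'$ as a disjoint union of $c_k$ copies of $Y_{f(k)}$ together with $r_k$ extra fixed points, where $c_k = \lfloor m_k / |Y_{f(k)}| \rfloor$ and $r_k = m_k - c_k|Y_{f(k)}|$. Iterating Remark \ref{BiggerSetRmrk}, and noting that a fixed point of a $\Delta_{f(k)}$-action has $\Delta_{f(k)}$-stabilizer equal to all of $\Delta_{f(k)}$ (whose preimage under $\pi_{\mathbf{T}_{f(k)}}$ is all of $\mathbb{F}$), the associated IRS is
\[
\nu_k' = \frac{c_k|Y_{f(k)}|}{m_k}\, \nu_{f(k)} + \frac{r_k}{m_k}\, \delta_\mathbb{F}.
\]
For this to tend to $\nu$ in $\IRS(\mathbb{F},\mathbf{X})$, I need $f(k) \to \infty$ (so that $\nu_{f(k)} \to \nu$) and $|Y_{f(k)}|/m_k \to 0$ (so that the $\delta_\mathbb{F}$-contribution vanishes while the first coefficient tends to $1$).

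To pick $f$, set $M_N = \max_{n \leq N} |Y_n|$. Using $m_k \to \infty$, choose a strictly increasing sequence $1 = k_1 < k_2 < \ldots$ with $m_k \geq 2^N M_N$ for all $k \geq k_N$, and define $f(k) = N$ for $k_N \leq k < k_{N+1}$. This $f$ is nondecreasing and unbounded by construction, and for every $k$ one has $|Y_{f(k)}| \leq M_{f(k)} \leq 2^{-f(k)} m_k$; in particular $|Y_{f(k)}| \leq m_k$ (so $c_k \geq 1$ and $Y_k'$ is well-defined) and $r_k/m_k < 2^{-f(k)} \to 0$. The convergence $\nu_k' \to \nu$ then follows by testing each cylinder set $C_{r,W}$ via the criterion (\ref{W*ConvCrit}): the second coefficient vanishes, the first tends to $1$, and $\nu_{f(k)}(C_{r,W}) \to \nu(C_{r,W})$ because $f(k) \to \infty$.

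The main obstacle is arranging $f$ to be simultaneously nondecreasing, unbounded, and to force $|Y_{f(k)}|/m_k \to 0$, without any monotonicity hypothesis on $(m_k)$ or on $(|Y_n|)$. A naive choice such as $f(k) = \max\{n : |Y_n| \leq \sqrt{m_k}\}$ need not be monotone in $k$; the staircase construction via the thresholds $2^N M_N$ sidesteps this by producing an $f$ that is monotone by definition while still matching the (possibly irregular) growth of the $m_k$.
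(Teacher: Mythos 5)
Your proposal is correct and is essentially the paper's own proof: the paper likewise forms $Y_k'$ from $\lfloor m_k/|Y_{f(k)}|\rfloor$ disjoint copies of $Y_{f(k)}$ together with leftover points carrying the trivial action, reads off the associated IRS as the convex combination $\frac{m_k-r_k}{m_k}\nu_{f(k)}+\frac{r_k}{m_k}\delta_{\Delta_{f(k)}}$ via Remark \ref{BiggerSetRmrk}, and defines $f$ by the same kind of monotone staircase of thresholds guaranteeing $|Y_{f(k)}|/m_k\rightarrow 0$. The only nitpick is that insisting $k_1=1$ may make the threshold $m_k\geq 2M_1$ fail for finitely many small $k$; this is harmless (start the staircase later and take, say, trivial actions on $m_k$ points for those $k$), since convergence is an asymptotic condition.
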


\begin{proof}
For $r$ a positive integer, 
let $Z_r$ be a set of size $r$, 
on which we let $\mathbb{F}$ act trivially. 
Then for any quotient $\Gamma$ of $\mathbb{F}$, 
$Z_r$ is naturally a $\Gamma$-set. 
Let $(i_n)$ be a strictly increasing sequence 
of positive integers such that 
for all $i_n \leq k < i_{n+1}$, 
$\lvert Y_n \rvert / m_k < 1/n$. 
Set $f (k) = n$. 
For each $i_n \leq k < i_{n+1}$ 
write $m_k = q_k \lvert Y_n \rvert + r_k$, 
with $q_k \geq n$ and 
$0 \leq r_k < \lvert Y_n \rvert$, and set: 
\begin{equation*}
Y_k ' = Z_{r_k} \sqcup \bigsqcup_{q_k} Y_n
\end{equation*}
a finite $\Delta_n$-set with $\lvert Y_k ' \rvert=m_k$. 
By Remark \ref{BiggerSetRmrk}, 
the IRS $\nu_k ' \in \IRS (\Delta_n , \mathbf{T}_n)$ 
associated to $Y_k '$ is given by: 
\begin{equation*}
\nu_k ' = \frac{m_k - r_k}{m_k} \nu_n + \frac{r_k}{m_k} \delta_{\Delta_n}
\end{equation*}
Then $(\nu_k ')$ converges to $\nu$, 
since $r_k / m_k < \lvert Y_n \rvert/m_k < 1/n \rightarrow 0$ as $k \rightarrow \infty$, 
because $k < i_{n+1}$. 
\end{proof}

The IRSs associated to finite $\Gamma$-sets are 
one source of examples of finite-index IRSs of $\Gamma$. 
Not every finite-index IRS is necessarily of this form; 
for instance if $\mu \in \IRS(\Gamma)$ 
is the IRS associated to a finite $\Gamma$-set, 
then for every $r \in \mathbb{N}$ 
and $W \subseteq \Gamma$, 
each $\mu(C_{r,W})$ is a rational number. 
Nevertheless, the next construction 
(which is a slight modification of 
Lemma 4.4 of \cite{BeLuTh}) 
shows that every finite-index IRS may be \emph{approximated} 
by IRSs associated to actions on finite sets. 

\begin{lem} \label{IRSApproxLem}
Let $(\Gamma_n , \mathbf{S}_n)$ be a sequence of 
marked $d$-generated groups; 
let $\mu_n \in \IRS (\Gamma_n , \mathbf{S}_n)$ 
be a finite-index IRS, and suppose 
the sequence $(\mu_n)$ converges to some 
$\mu \in \IRS(\mathbb{F},\mathbf{X})$. 
Then for each $n$ there exists a finite 
$\Gamma_n$-set $X_n$ such that, 
letting $\nu_n \in \IRS(\Gamma_n , \mathbf{S}_n)$ 
be the IRS associated to $X_n$, 
the sequence $(\nu_n)$ converges to 
$\mu$ also. 
\end{lem}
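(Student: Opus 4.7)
The plan is to realize each $\mu_n$, up to arbitrarily small error, as the IRS of a carefully chosen finite $\Gamma_n$-set $X_n$, and then to inherit convergence to $\mu$ from the hypothesis on $(\mu_n)$. Since $\mu_n$ is a finite-index IRS, Example \ref{PtMassEx} lets me write
\[
\mu_n = \sum_{j=1}^{l_n} \alpha_{n,j}\, \nu_{n,j},
\]
where each $\nu_{n,j}$ is the uniform probability measure on a single conjugacy class $\mathcal{C}_{n,j} = [H_{n,j}]$ of finite-index subgroups of $\Gamma_n$, with $\alpha_{n,j} > 0$ and $\sum_{j}\alpha_{n,j} = 1$. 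A direct orbit-counting computation on the transitive $\Gamma_n$-set $\Gamma_n/H_{n,j}$ (counting cosets $gH_{n,j}$ with a prescribed conjugate as stabilizer via the normalizer $N_{\Gamma_n}(H_{n,j})$) shows that its associated IRS is precisely $\nu_{n,j}$.

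Next, I would take $X_n$ to be a disjoint union
\[
X_n = \bigsqcup_{j=1}^{l_n} \bigl(\Gamma_n/H_{n,j}\bigr)^{\sqcup m_{n,j}}
\]
with positive integer multiplicities $m_{n,j}$ to be chosen. Iterating Remark \ref{BiggerSetRmrk}, the IRS $\upsilon_n$ associated to $X_n$ takes the form $\sum_{j} \beta_{n,j}\,\nu_{n,j}$, with $\beta_{n,j} = m_{n,j}[\Gamma_n:H_{n,j}] / \lvert X_n \rvert$. The construction thus reduces to choosing $m_{n,j}$ so that the rationals $\beta_{n,j}$ approximate the reals $\alpha_{n,j}$ to any desired tolerance. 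Fix any sequence $\varepsilon_n \to 0$; standard rational approximation furnishes non-negative rationals $\alpha'_{n,j} = p_{n,j}/Q_n$ (common denominator) summing to $1$ with $\lvert\alpha'_{n,j} - \alpha_{n,j}\rvert < \varepsilon_n/l_n$, after which setting $m_{n,j} = p_{n,j}\prod_{i \neq j}[\Gamma_n:H_{n,i}]$ (scaling all $m_{n,j}$ up by a common integer factor if necessary to ensure positivity of every entry with $\alpha'_{n,j} > 0$) forces $\beta_{n,j} = \alpha'_{n,j}$ exactly.

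To finish, observe that $\mu_n$ and $\upsilon_n$ are both supported on the same finite set $\bigcup_j \mathcal{C}_{n,j} \subseteq \Sub(\mathbb{F})$ with close weights, so for every basic cylinder $C_{r,W}$,
\[
\bigl\lvert \upsilon_n(C_{r,W}) - \mu_n(C_{r,W})\bigr\rvert \leq \sum_{j=1}^{l_n} \lvert \beta_{n,j} - \alpha_{n,j}\rvert < \varepsilon_n.
\]
Combined with the hypothesis $\mu_n \to \mu$ and the criterion (\ref{W*ConvCrit}), this yields $\upsilon_n(C_{r,W}) \to \mu(C_{r,W})$ for every $r$ and $W \subseteq B_{\mathbf{X}}(r)$, i.e.\ $\upsilon_n \to \mu$ in $\IRS(\mathbb{F})$, as required.

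The \emph{only real obstacle} is the potential irrationality of the coefficients $\alpha_{n,j}$, which prevents one from realizing $\mu_n$ exactly as the IRS of some $\Gamma_n$-set; but since we are free to choose $X_n$ afresh for each $n$ and only need $\varepsilon_n \to 0$, the rational-approximation juggling against the indices $[\Gamma_n:H_{n,j}]$ is entirely elementary, and no genuine difficulty arises.
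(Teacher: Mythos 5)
Your argument is correct in substance, but it takes a genuinely different route from the paper. The paper's proof is a two-line outsourcing: a finite-index IRS is trivially cosofic, so Lemma 4.4 of \cite{BeLuTh} supplies, for each $n$, a sequence of finite $\Gamma_n$-sets whose associated IRS converge weak$^{\ast}$ to $\mu_n$, and then a diagonal extraction (using metrizability of $\IRS(\mathbb{F},\mathbf{X})$) produces the required $X_n$. You instead reprove the relevant special case of that lemma from scratch: you identify the IRS of the coset action $\Gamma_n/H_{n,j}$ as the uniform measure on the conjugacy class of $H_{n,j}$ (the normalizer count is right), and then use Remark \ref{BiggerSetRmrk} together with rational approximation of the weights to build a single finite $\Gamma_n$-set whose IRS is within total variation $\varepsilon_n$ of $\mu_n$; convergence to $\mu$ then follows from the triangle inequality and (\ref{W*ConvCrit}). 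This buys a self-contained, purely elementary proof, and in fact a stronger conclusion (total-variation rather than merely weak$^{\ast}$ approximation of each $\mu_n$), at the cost of being longer than the paper's citation-plus-diagonalization. One small caveat: the paper's definition of a finite-index IRS only requires an \emph{atomic} measure supported on finite-index subgroups, so $\mu_n$ may have countably many atoms, i.e.\ countably many conjugacy classes in its support, whereas your decomposition $\mu_n=\sum_{j=1}^{l_n}\alpha_{n,j}\nu_{n,j}$ (and the appeal to Example \ref{PtMassEx}) tacitly assumes finitely many. The fix is immediate — truncate to finitely many conjugacy classes carrying mass at least $1-\varepsilon_n/2$ before doing the rational approximation, absorbing the error into the same total-variation estimate — but as written this step should be justified.
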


\begin{proof}
%By the discussion preceding Definition 4.1 
%in \cite{BecLubT}, for each $n$, 
%there is a sequence of finitely supported 
%IRS $\mu_{n,m} \in \IRS (\Gamma_n , \mathbf{S}_n)$ 
%converging to $\mu_n$, 
%with $\supp (\mu_{n,m}) \subseteq \supp (\mu_n)$ 
%for all $n$ and $m$. 
%In particular, the $\mu_{n,m}$ are all finite-index, 
%since the $\mu_n$ are, 
%and for some $f:\mathbb{N}\rightarrow\mathbb{N}$, 
%$\mu_{n,f(n)}$. 
%Replacing $\mu_n$ by $\mu_{n,f(n)}$, 
%we may henceforth 
%assume the $\mu_n$ are finitely supported. 

%... ??? ... (just use BLT Lemma 4.4)

Being finite-index, $\mu_n$ is in particular 
a cosofic IRS, so Lemma 4.4 of \cite{BeLuTh} applies. 
There is a sequence $(X_{n,m})_m$ of finite 
$\Gamma_n$-sets whose associated sequence 
$(\mu_{n,m})_m$ of IRSs converges to $\mu_n$. 
Since $(\mu_n)$ converges to $\mu$, 
there exists increasing 
$f:\mathbb{N}\rightarrow\mathbb{N}$ 
such that $(\mu_{n,f(n)})$ converges to $\mu$. 
We may therefore take $X_n = X_{n,f(n)}$. 
\end{proof}

In \cite{BeLuTh} (Lemma 7.5), the following is proved. 

\begin{lem} \label{BLT75Lem}
Let $(X_n)$ be a convergent sequence of finite $\mathbb{F}$-sets, 
with $\mu \in \IRS(\mathbb{F},\mathbf{X})$ being the 
limit of the sequence $(\Stab_{\ast}(\nu_n))$ of IRS 
associated to the $X_n$. 
Then $(X_n)$ is a stability challenge for $\Gamma$ 
iff $\mu$ lies in $\IRS(\Gamma,\mathbf{S})$. 
\end{lem}

Moreover it is shown that in proving stability 
of $\Gamma$, it suffices to consider 
\emph{convergent} stability challenges. 
In a similar vein we have the following. 

\begin{propn} \label{ConvStabChallProp}
A finitely generated group $\Gamma$ is locally stable iff 
every convergent stability challenge for $\Gamma$ has a local solution. 
\end{propn}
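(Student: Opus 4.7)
The plan is to argue by combining Propositions \ref{StabChallProp} and \ref{LocStabEqnProp} with a standard subsequence extraction using the compactness of $\IRS(\mathbb{F})$. One direction is essentially free: if $\Gamma$ is locally stable then by Proposition \ref{StabChallProp} every stability challenge has a local solution, so in particular so does every convergent one.

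For the substantive direction, I would assume that every convergent stability challenge admits a local solution and derive local stability of $\Gamma$ by contradiction. Suppose $\Gamma$ is not locally stable; then by Proposition \ref{LocStabEqnProp} the system $\lbrace r = e \rbrace_{r \in \ker(\pi)}$ fails to be locally stable in permutations. Unpacking the negation exactly as in the proof of Proposition \ref{LocStabEqnProp}, I can fix $\epsilon > 0$ and a finite $R_0 \subseteq \ker(\pi)$ such that for each $n$ there is an integer $k_n$ and a tuple $\overline{\sigma}^{(n)} \in \Sym(k_n)^S$ which is a $(1/n)$-almost-solution to $\ker(\pi) \cap B_S(n)$ yet is not $\epsilon$-close to any genuine solution of $R_0$. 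Letting $X_n = [k_n]$ carry the $\mathbb{F}$-action induced by $\overline{\sigma}^{(n)}$ turns $(X_n)$ into a stability challenge for $\Gamma$.

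Now I would invoke the fact that $\IRS(\mathbb{F})$ is compact and metrizable to extract a subsequence $(X_{n_k})_k$ along which the associated IRS converge in $\IRS(\mathbb{F})$; this gives a \emph{convergent} stability challenge, and crucially the badness property of $\overline{\sigma}^{(n)}$ is inherited by every term of the subsequence. By the standing hypothesis, $(X_{n_k})_k$ admits a local solution $(Y_{n_k})_k$, witnessed by bijections $f_{n_k} : X_{n_k} \rightarrow Y_{n_k}$ with $\lVert f_{n_k} \rVert_{\gen} \rightarrow 0$. Transporting the $\mathbb{F}$-action on $Y_{n_k}$ back to $X_{n_k}$ through $f_{n_k}$ produces tuples $\overline{\tau}^{(n_k)} \in \Sym(k_{n_k})^S$; since $(Y_{n_k})_k$ is a local $\Gamma$-set, every $r \in R_0$ acts trivially on $Y_{n_k}$ for all sufficiently large $k$, so $\overline{\tau}^{(n_k)}$ is eventually a genuine solution of $R_0$, and the condition $\lVert f_{n_k} \rVert_{\gen} \rightarrow 0$ forces $\overline{\tau}^{(n_k)}$ to be eventually $\epsilon$-close to $\overline{\sigma}^{(n_k)}$, which is the desired contradiction.

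I do not anticipate any substantive obstacle: the argument is largely a repackaging of the second half of the proof of Proposition \ref{LocStabEqnProp}, with the only new ingredient being the sequential compactness of $\IRS(\mathbb{F})$ used to replace an arbitrary stability challenge by a convergent one. The one place requiring minor care is verifying that passing to a subsequence does not destroy the counterexample, but this is automatic since the failure of $\overline{\sigma}^{(n)}$ to be $\epsilon$-close to a solution of $R_0$ is a property of each individual $n$.
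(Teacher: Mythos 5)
Your proof is correct, but it takes a somewhat different route from the paper's. The paper stays entirely at the level of stability challenges: it introduces the notion of a finite $\mathbb{F}$-set being ``$m$-good'' for $X_n$, observes that a local solution exists iff for every $m$ the terms $X_n$ eventually admit $m$-good partners, and uses this to pass to a subsequence of a challenge with no local solution for which a \emph{fixed} $m$ witnesses failure at \emph{every} term; compactness of $\IRS(\mathbb{F})$ then yields a convergent sub-subsequence, contradicting the hypothesis. You instead route through Proposition \ref{LocStabEqnProp}: negating local stability of the system $\lbrace r=e\rbrace_{r\in\ker(\pi)}$ hands you a per-term certificate of badness (the tuple $\overline{\sigma}^{(n)}$ is never $\epsilon$-close to a solution of the fixed finite $R_0$), which trivially survives passage to the convergent subsequence supplied by compactness of $\IRS(\mathbb{F})$, and the contradiction is then extracted exactly as in the first half of the proof of Proposition \ref{LocStabEqnProp} (transporting the action on $Y_{n_k}$ back through $f_{n_k}$, with $\lvert S\rvert\,\lVert f_{n_k}\rVert_{\gen}\rightarrow 0$ giving eventual $\epsilon$-closeness and the local $\Gamma$-set condition giving an eventual genuine solution of $R_0$). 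What your approach buys is that the delicate point in the paper's argument --- that ``has no local solution'' is a property of the whole sequence and must be converted into something inherited by subsequences, which the paper handles via the $m$-good bookkeeping --- becomes automatic, at the cost of invoking the equational characterization; the paper's version is more self-contained in that it needs only Proposition \ref{StabChallProp}. Both arguments share the same engine, namely sequential compactness of $\IRS(\mathbb{F},\mathbf{X})$.
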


\begin{proof}
Let $\mathbf{S}$ be a $d$-marking on $\Gamma$. 
By Proposition \ref{StabChallProp}, it suffices to prove that if 
every convergent stability challenge has a local solution, 
then so does every stability challenge. 
Suppose to the contrary, that there is a stability challenge $(X_n)$ 
for $\Gamma$ with no local solution. 
For $m$ a positive integer and $X$ a finite $\mathbb{F}$-set, 
we shall describe a finite $\mathbb{F}$-set $Y$ as ``$m$-good' for $X$ if: 
$\lvert X \rvert = \lvert Y \rvert$; 
$d_{\gen}(X,Y) < 1/m$ and $\ker (\pi_{\mathbf{S}}) \cap B_{\mathbf{X}} (m)$ 
acts trivially on $Y$. 
In this terminology, a sequence $(Y_n)$ of finite $\mathbf{F}$-sets 
is a local solution for $(X_n)$ iff for all $m$, 
$Y_n$ is $m$-good for $X_n$ for all but finitely many $n$. 
Therefore, passing to a subsequence of our $(X_n)$, 
we may assume that there exists a positive integer $m$ such 
that for no $n$ does there exist a finite $\mathbb{F}$-set which 
is $m$-good for $X_n$. 
This $(X_n)$ is then a stability challenge for $\Gamma$, 
no subsequence of which has a local solution. 
On the other hand, by compactness of $\IRS(\mathbb{F})$, 
$(X_n)$ has a subsequence which is a convergent stability challenge for $\Gamma$, 
contradiction. 
\end{proof}

We now come to the main result of this Section, 
which is our necessary and sufficient 
condition for local stability 
of a finitely generated amenable group 
in terms of IRSs. 

\begin{defn}
Let $\Gamma$ be a $d$-generated group; 
let $\mathbf{S}$ be a $d$-marking on $\Gamma$, 
and let $\mu \in \IRS (\Gamma,\mathbf{S})$. 
We call $\mu$ \emph{partially cosofic in 
$(\Gamma,\mathbf{S})$} 
if there exists a sequence of $d$-marked finite groups $(\Delta_n,\mathbf{T}_n)$ 
converging to $(\Gamma,\mathbf{S})$ in $\mathcal{G}_d$ 
and $\nu_n \in \IRS (\Delta_n,\mathbf{T}_n)$ such that 
$\nu_n \rightarrow \mu$ in $\IRS(\mathbb{F},\mathbf{X})$. 
\end{defn}

\begin{thm} \label{IRSMainThm}
Suppose $\Gamma$ is a $d$-generated amenable group 
and let $\mathbf{S}$ be a $d$-marking on $\Gamma$. 
Then $\Gamma$ is locally stable if and only if 
every $\mu \in \IRS (\Gamma,\mathbf{S})$ is 
partially cosofic in $(\Gamma,\mathbf{S})$. 
\end{thm}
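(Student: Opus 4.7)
The plan is to translate the stability-challenge/local-solution picture from Propositions \ref{StabChallProp} and \ref{ConvStabChallProp} into the IRS language, via the correspondences sending a finite $\mathbb{F}$-set $X$ to its associated IRS, and a local $\Gamma$-set $Y$ to the finite marked group $\Delta \leq \Sym(Y)$ obtained as the image of $\mathbb{F}$, equipped with the natural marking induced by $\mathbf{X}$.

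For the forward direction, let $\mu \in \IRS(\Gamma,\mathbf{S})$. Amenability of $\Gamma$ (in particular, soficity) should furnish a sequence of finite $\mathbb{F}$-sets $X_n$ whose associated IRS converge to $\mu$ in $\IRS(\mathbb{F},\mathbf{X})$; since $\mu$ is supported on subgroups containing $\ker(\pi)$, the sequence $(X_n)$ is automatically a stability challenge for $\Gamma$. Local stability (via Proposition \ref{StabChallProp}) then supplies a local solution $(Y_n)$: finite $\mathbb{F}$-sets with $|Y_n| = |X_n|$, $d_{\gen}(X_n,Y_n) \to 0$, and on which every $r \in \ker(\pi)$ eventually acts trivially. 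Let $\Delta_n \leq \Sym(Y_n)$ be the image of $\mathbb{F}$, carrying the natural marking $\mathbf{T}_n$, and let $\nu_n \in \IRS(\Delta_n,\mathbf{T}_n)$ be the IRS associated to $Y_n$. A short continuity argument shows that closeness in $d_{\gen}$ forces closeness of associated IRS in the weak$^\ast$-topology, so $\nu_n \to \mu$. A priori $(\Delta_n,\mathbf{T}_n)$ might converge only to a proper marked quotient of $(\Gamma,\mathbf{S})$; to fix this, I invoke Lemma \ref{soficwlslem} (local stability plus amenability forces $\Gamma$ to be LEF) to pick marked finite groups $(L_n,\mathbf{U}_n) \to (\Gamma,\mathbf{S})$ in $\mathcal{G}_d$, and replace $\Delta_n$ by $\Delta_n \times L_n$ with the diagonal marking, extending the action trivially on the $L_n$-factor. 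This operation leaves $\nu_n$ unchanged as an element of $\IRS(\mathbb{F})$, while forcing the markings to converge to $(\Gamma,\mathbf{S})$, yielding the required partially-cosofic data for $\mu$.

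For the converse, by Proposition \ref{ConvStabChallProp} it suffices to supply a local solution to an arbitrary convergent stability challenge $(X_n)$ for $\Gamma$. Let $\mu$ be its IRS limit; by hypothesis there exist $(\Delta_n,\mathbf{T}_n) \to (\Gamma,\mathbf{S})$ and $\nu_n \in \IRS(\Delta_n,\mathbf{T}_n)$ with $\nu_n \to \mu$. Finiteness of $\Delta_n$ makes each $\nu_n$ automatically a finite-index IRS, so Lemma \ref{IRSApproxLem} lets us replace $\nu_n$ by the IRS of a finite $\Delta_n$-set, and Lemma \ref{BiggerSetLem} lets us inflate these sets to have cardinality $|X_n|$ (the bounded case being trivial, so we may assume $|X_n| \to \infty$); call the resulting sets $Y_n$. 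Then $(Y_n)$ is a local $\Gamma$-set, since $(\Delta_n,\mathbf{T}_n) \to (\Gamma,\mathbf{S})$ forces every $r \in \ker(\pi)$ to be eventually trivial in $\Delta_n$ and hence to fix $Y_n$ pointwise. The remaining and main task — and the one essential use of amenability in the argument — is to show that $d_{\gen}(X_n,Y_n) \to 0$, so that $(Y_n)$ really is a local solution to $(X_n)$. I expect to prove this by an Ornstein--Weiss quasi-tiling of $X_n$ and $Y_n$ by F\o lner sets for the (near-)$\Gamma$ action, matching up the local patterns whose frequencies are controlled by the common IRS limit $\mu$, in direct parallel to the corresponding step in the proof of Theorem 1.3 of \cite{BeLuTh}.
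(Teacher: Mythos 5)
Your direction ``locally stable $\Rightarrow$ every $\mu\in\IRS(\Gamma,\mathbf{S})$ is partially cosofic'' is essentially correct, and is in fact a little more direct than the paper's argument: where the paper passes through finitely presented marked covers $(\Gamma_n,\mathbf{S}_n)$, normal cores and a fibre product, you simply take $\Delta_n$ to be the image of $\mathbb{F}$ in $\Sym(Y_n)$ and then multiply by LEF approximants $(L_n,\mathbf{U}_n)$ to force the markings to converge to $(\Gamma,\mathbf{S})$; this leaves the associated IRS unchanged in $\IRS(\mathbb{F},\mathbf{X})$, and your ``short continuity argument'' is sound (for $w$ of length $\ell$, the near-equivariant bijection $f_n$ transports the stabilizer pattern inside $B_{\mathbf{X}}(r)$ outside a set of measure at most a constant times $r\,\lvert B_{\mathbf{X}}(r)\rvert\,\lVert f_n\rVert_{\gen}$). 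Both you and the paper use LEF-ness of $\Gamma$, obtained from amenability plus local stability, at the same point.

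The other direction, however, has a genuine gap precisely at the step you yourself identify as ``the remaining and main task'': proving $d_{\gen}(X_n,Y_n)\rightarrow 0$. You propose an Ornstein--Weiss quasi-tiling ``in direct parallel'' to Theorem 1.3 of \cite{BeLuTh}, but the relevant statement there (Proposition 6.8 of \cite{BeLuTh}, quoted here as Proposition \ref{dstatProp} (ii)) compares statistical and generator distance only for actions factoring through a single fixed amenable marked group. Your $Y_n$ are actions of the varying finite groups $\Delta_n$, which need not be quotients of $\Gamma$, so F\o lner sets of $\Gamma$ do not act on $Y_n$ and the ``F\o lner sets for the (near-)$\Gamma$ action'' you invoke would have to be built and controlled from scratch; no such argument is given, and this is exactly the technical heart of the theorem. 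The paper closes this gap without redoing any tiling, by a device absent from your proposal: form the diagonal product $(\tilde{\Gamma},\mathbf{T})=\otimes(\Delta_n,\mathbf{T}_n)$. Every $Y_n$ is a genuine $\tilde{\Gamma}$-set, and $\tilde{\Gamma}$ is amenable by Lemma \ref{LiftAmenLem}, since the tail homomorphism onto $\Gamma$ has kernel contained in $\bigoplus_n\Delta_n$. Then Proposition \ref{dstatProp} (i) gives $d_{\stat}(X_n,Y_n)\rightarrow 0$ from the common IRS limit $\mu$, and Proposition \ref{dstatProp} (ii), applied to the single amenable marked group $(\tilde{\Gamma},\mathbf{T})$, upgrades this to $d_{\gen}(X_n,Y_n)\rightarrow 0$. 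With that substitution (and the explicit remark, used implicitly in your reduction, that the IRS limit of a convergent stability challenge lies in $\IRS(\Gamma,\mathbf{S})$, as quoted from \cite{BeLuTh}), your outline becomes a complete proof.
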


\begin{rmrk} \label{TrivialIRSRmrk}
The $d$-generated group $\Gamma$ is LEF iff for some (equivalently any) 
$d$-marking $\mathbf{S}$ on $\Gamma$ there exists a sequence of 
$d$-marked finite groups $(\Delta_n,\mathbf{T}_n)$ 
converging to $(\Gamma,\mathbf{S})$ in $\mathcal{G}_d$. 
For \emph{any} such sequence, the trivial IRS 
$\delta_{\lbrace e \rbrace}$ and 
$\delta_{\Delta_n} \in \IRS (\Delta_n,\mathbf{T}_n)$ 
converge, respectively, to the trivial IRS 
$\delta_{\lbrace e \rbrace}$ 
and $\delta_{\Gamma} \in \IRS (\Gamma,\mathbf{S})$, 
so these IRSs are partially cosofic in any LEF group. 
\end{rmrk}

Before embarking on the proof of Theorem 
\ref{IRSMainThm}, we note one slight refinement, 
which will be useful in applications, 
particularly in the next Section. 

\begin{coroll} \label{IRSMainCoroll}
Let $(\Gamma,\mathbf{S})$ 
be as in Theorem \ref{IRSMainThm}. 
The following are equivalent: 
\begin{itemize}
\item[(i)] $\Gamma$ is locally stable; 

\item[(ii)] Every $\mu \in \IRS (\Gamma,\mathbf{S})$ is 
partially cosofic in $(\Gamma,\mathbf{S})$; 

\item[(iii)] Every ergodic 
$\mu \in \IRS (\Gamma,\mathbf{S})$ is 
partially cosofic in $(\Gamma,\mathbf{S})$. 
\end{itemize}
\end{coroll}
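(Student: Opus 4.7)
The plan is to observe that the equivalence (i) $\Leftrightarrow$ (ii) is exactly the content of Theorem \ref{IRSMainThm}, and (ii) $\Rightarrow$ (iii) is trivial, so all the content lies in (iii) $\Rightarrow$ (ii). Let $P \subseteq \IRS(\Gamma, \mathbf{S})$ denote the collection of IRS that are partially cosofic in $(\Gamma, \mathbf{S})$. Under hypothesis (iii), $P$ contains every ergodic IRS of $\Gamma$. If I can show that $P$ is a closed convex subset of $\IRS(\Gamma, \mathbf{S})$, then the Choquet-type remark recorded in the paragraph following Example \ref{PtMassEx} --- that the only closed convex subset of $\IRS(\Gamma)$ containing every ergodic IRS is $\IRS(\Gamma)$ itself --- will force $P = \IRS(\Gamma, \mathbf{S})$, which is (ii).

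Closedness I expect to follow from a routine diagonal extraction. Both $\mathcal{G}_d$ and $\IRS(\mathbb{F})$ (equipped with the weak$^{\ast}$ topology) are metrizable, so given $\mu^{(k)} \in P$ with $\mu^{(k)} \to \mu$ and witnessing data $(\Delta^{(k)}_n, \mathbf{T}^{(k)}_n, \nu^{(k)}_n)_n$ for each $k$, I can pick indices $n_k$ growing fast enough that $(\Delta^{(k)}_{n_k}, \mathbf{T}^{(k)}_{n_k}) \to (\Gamma, \mathbf{S})$ in $\mathcal{G}_d$ while $\nu^{(k)}_{n_k} \to \mu$ in $\IRS(\mathbb{F})$, witnessing $\mu \in P$.

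For convexity, given $\mu_1, \mu_2 \in P$ with witnesses $(\Delta^{(i)}_n, \mathbf{T}^{(i)}_n, \nu^{(i)}_n)$ ($i = 1, 2$) and a parameter $t \in [0,1]$, I would set $\Delta_n$ to be the subgroup of $\Delta^{(1)}_n \times \Delta^{(2)}_n$ generated by the coordinatewise diagonal $d$-marking $\mathbf{T}_n$ (whose $j$th entry is the pair of $j$th entries of $\mathbf{T}^{(1)}_n$ and $\mathbf{T}^{(2)}_n$). The two coordinate projections restrict to quotients of marked groups $\psi_i : (\Delta_n, \mathbf{T}_n) \twoheadrightarrow (\Delta^{(i)}_n, \mathbf{T}^{(i)}_n)$, and since $(\psi_1, \psi_2)$ embeds $\Delta_n$ into the product, I get $\ker(\pi_{\mathbf{T}_n}) = \ker(\pi_{\mathbf{T}^{(1)}_n}) \cap \ker(\pi_{\mathbf{T}^{(2)}_n})$; intersecting the conditions $\ker(\pi_{\mathbf{T}^{(i)}_n}) \cap B_{\mathbf{X}}(r) = \ker(\pi_{\mathbf{S}}) \cap B_{\mathbf{X}}(r)$ then yields $(\Delta_n, \mathbf{T}_n) \to (\Gamma, \mathbf{S})$ in $\mathcal{G}_d$. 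Pulling each $\nu^{(i)}_n$ back along $\psi_i$ gives $\tilde\nu^{(i)}_n \in \IRS(\Delta_n, \mathbf{T}_n)$, and the identity $\pi_{\mathbf{T}_n}^{-1}(\psi_i^{-1}(H)) = \pi_{\mathbf{T}^{(i)}_n}^{-1}(H)$ (from uniqueness of the epimorphism of marked groups) shows that $\tilde\nu^{(i)}_n$ and $\nu^{(i)}_n$ induce the same element of $\IRS(\mathbb{F}, \mathbf{X})$. Hence $t\tilde\nu^{(1)}_n + (1-t)\tilde\nu^{(2)}_n \in \IRS(\Delta_n, \mathbf{T}_n)$ converges in $\IRS(\mathbb{F})$ to $t\mu_1 + (1-t)\mu_2$, which is therefore in $P$.

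The main subtlety I anticipate is cleanly verifying that pulling IRS back along a quotient of marked finite groups is compatible with the embedding into $\IRS(\mathbb{F}, \mathbf{X})$; once this identification is made explicit, both closedness and convexity fall out routinely, and the Krein--Milman-type reduction to ergodic IRS concludes the argument.
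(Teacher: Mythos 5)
Your proposal is correct and follows essentially the same route as the paper: reduce (iii)$\Rightarrow$(ii) to showing the set of partially cosofic IRS is closed and convex, prove closedness by a diagonal extraction, and prove convexity by passing to the subgroup of $\Delta_n^{(1)}\times\Delta_n^{(2)}$ generated by the diagonal marking. Your explicit verification via pullbacks along the coordinate projections is just a more detailed phrasing of the paper's observation that every subgroup in the support of the combined IRS contains $\ker(\pi_{\mathbf{T}_n})$, so no further comment is needed.
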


\begin{proof}
Given Theorem \ref{IRSMainThm}, 
the only nontrivial implication is from (iii) to (ii). 
Let $\mathcal{P} \subseteq \IRS (\Gamma,\mathbf{S})$ 
be the set of partially cosofic IRSs in 
$(\Gamma,\mathbf{S})$. 
As noted when we first defined ergodic IRSs, 
it suffices to show that 
$\mathcal{P}$ is closed and convex. 
For closure, suppose that $(\mu_n)$ is a sequence 
consisting of partially cosofic IRSs in 
$(\Gamma,\mathbf{S})$, 
and converging to $\mu \in \IRS (\Gamma,\mathbf{S})$. 
For each $n$, let $(\Delta_m ^{(n)},\mathbf{T}_m ^{(n)})$ be finite $d$-marked groups and 
$\nu_m ^{(n)} \in \IRS (\Delta_m ^{(n)},\mathbf{T}_m ^{(n)})$ witness the partial cosoficity of 
$\mu_n$ in $(\Gamma,\mathbf{S})$. 
Then for some increasing 
$f: \mathbb{N} \rightarrow \mathbb{N}$, 
the sequences $(\Delta_{f(n)} ^{(n)},\mathbf{T}_{f(n)} ^{(n)})$ and $\nu_{f(n)} ^{(n)}$ witness 
that $\mu$ is partially cosofic in 
$(\Gamma,\mathbf{S})$ also. 

For convexity, let $\mu_1 , \mu_2 \in \mathcal{P}$ 
and $t \in (0,1)$. 
For $i=1,2$, let $(\Delta_n ^{(i)},\mathbf{T}_n ^{(i)})$  
and $\nu_n ^{(i)} \in \IRS (\Delta_n ^{(i)},\mathbf{T}_n ^{(i)})$ witness the partial cosoficity of $\mu_i$. 
It is clear that 
$\nu_n = t \nu_n ^{(1)} + (1-t) \nu_n ^{(2)}$ 
converges to $t \mu_1 + (1-t) \mu_2$ in 
$\IRS(\mathbb{F},\mathbf{X})$. 
It therefore suffices to find an associated sequence 
of finite groups. 

For $1 \leq j \leq d$, 
let $t_{n,j}  = (t_{n,j} ^{(1)}, t_{n,j} ^{(2)})$, 
where $\mathbf{T}_n ^{(i)} 
= (t_{n,1} ^{(i)},\ldots , t_{n,d} ^{(i)})$, 
and let: 
\begin{equation*}
\Delta_n = \langle t_{n,1} , \ldots , t_{n,d} \rangle 
\leq \Delta_n ^{(1)} \times \Delta_n ^{(2)}
\end{equation*}
(so that the projection of $\Delta_n$ to each factor is surjective). 
It is easy to see that $(\Delta_n,\mathbf{T}_n)$ 
converges to $(\Gamma,\mathbf{S})$ in 
$\mathcal{G}_d$, since the $(\Delta_n ^{(i)},\mathbf{T}_n ^{(i)})$ do. 
The $\nu_n ^{(i)}$ are finitely supported atomic IRSs, 
hence $\nu_n$ is too. 
Finally, if $H \leq \mathbb{F}$ lies in the support of $\nu_n$, 
then for one of $i = 1$ or $2$, $H$ lies in the 
support of $\nu_n ^{(i)}$, 
so $\ker(\pi_{\mathbf{T}}) \leq \ker(\pi_{\mathbf{T}^{(i)}}) \leq H$, 
and $\nu_n \in \IRS (\Delta_n,\mathbf{T}_n)$, 
as required. 
\end{proof}

There is defined in Section 6 of \cite{BeLuTh} a notion 
of ``statistical distance'' $d_{\stat}(X,Y)$ 
between a pair $X$ and $Y$ of Borel probabiliy 
spaces with Borel $\mathbb{F}$-actions. 
We will not need the full definition of $d_{\stat}$; 
only the following consequences. 

\begin{propn} \label{dstatProp}
Let $(X_n)$ and $(Y_n)$ be sequences of finite 
$\mathbb{F}$-sets, 
and let $\mu_n$ and 
$\nu_n \in \IRS (\mathbb{F},\mathbf{X})$ 
be the IRSs associated to $X_n$ and $Y_n$, 
respectively. 
\begin{itemize}
\item[(i)] If there exists 
$\lambda \in \IRS (\mathbb{F},\mathbf{X})$ 
such that $\mu_n \rightarrow \lambda$ 
and $\nu_n \rightarrow \lambda$, 
then we have that $d_{\stat} (X_n,Y_n) \rightarrow 0$; 

\item[(ii)] If $\mu_n \rightarrow \lambda \in \IRS (\mathbb{F},\mathbf{X})$ 
and $d_{\stat} (X_n,Y_n) \rightarrow 0$ then $\nu_n \rightarrow \lambda$ also; 

\item[(iii)] If $\lvert X_n \rvert = \lvert Y_n \rvert$ for all $n$ 
and $d_{\gen} (X_n,Y_n) \rightarrow 0$ 
then $d_{\stat} (X_n,Y_n) \rightarrow 0$; 

\item[(iv)] Suppose there exists a $d$-marked amenable 
group $(\Gamma,\mathbf{S})$ such that for all $n$, 
the action of $\mathbb{F}$ on $Y_n$ factors through 
$\pi_{\mathbf{S}}$, and 
$\lvert X_n \rvert = \lvert Y_n \rvert$ for all $n$. 
If we have $d_{\stat} (X_n,Y_n) \rightarrow 0$, 
then $d_{\gen} (X_n,Y_n) \rightarrow 0$. 

\end{itemize}
\end{propn}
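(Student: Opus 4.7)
The plan is to derive both parts directly from the definition and basic properties of $d_{\stat}$ as developed in Section 6 of \cite{BeLuTh}, rather than to redevelop that machinery here.

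For (i), one inspects the definition to see that $d_{\stat}(X,Y)$ depends on the pair $(X,Y)$ only through the IRS they induce in $\IRS(\mathbb{F},\mathbf{X})$, and moreover that $d_{\stat}$ is jointly continuous with respect to weak$^{\ast}$ convergence of these IRS. In particular, if both $\mu_n$ and $\nu_n$ converge to the same limit $\lambda$, a triangle-inequality argument in any metric compatible with $d_{\stat}$ yields $d_{\stat}(X_n,Y_n) \rightarrow 0$. This is essentially an unpacking of the definitions and requires no new ideas.

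For (ii), the amenability of $\Gamma$ plays an essential role, and the argument is considerably more delicate. The strategy from \cite{BeLuTh} Section 6 is that the weak statistical proximity of $X_n$ and $Y_n$ can be upgraded to the existence of bijections $f_n : X_n \rightarrow Y_n$ with $\lVert f_n \rVert_{\gen} \rightarrow 0$, provided one of the two actions factors through an amenable quotient of $\mathbb{F}$. Concretely, one decomposes $Y_n$ into Følner-type pieces with respect to the $\Gamma$-action; uses the hypothesis $d_{\stat}(X_n,Y_n) \rightarrow 0$ to find a collection of pieces in $X_n$ with matching local statistics; and applies a Hall-type marriage argument to produce piecewise bijections which respect the $\Gamma$-action except near the boundary of each Følner piece. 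Amenability of $\Gamma$ guarantees that these boundaries have vanishing density, so that the patched bijection has small $\lVert \cdot \rVert_{\gen}$. The cardinality assumption $\lvert X_n \rvert = \lvert Y_n \rvert$ is what allows the matching to be an honest bijection rather than only a partial matching.

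The main obstacle, were one to carry this out from first principles, is (ii): one must quantitatively control both the relative densities of the Følner pieces between $X_n$ and $Y_n$, and the $\gen$-errors accumulated along their boundaries, in a way that is uniform across $n$. This is precisely what the relevant lemmas in Section 6 of \cite{BeLuTh} establish, and we simply appeal to them.
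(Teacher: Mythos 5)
Your proposal is correct and matches the paper's approach: the paper likewise proves (i) and (ii) purely by citation to Section 6 of \cite{BeLuTh} (specifically Lemma 6.1(ii) for item (i) and Proposition 6.8 for item (ii)), exactly as you do. Your sketch of the amenability/F\o lner mechanism behind (ii) is accurate in spirit but is not needed, since both you and the paper ultimately just appeal to those external results.
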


\begin{proof}
Items (i) and (ii) follow from Lemma 6.1; 
item (iii) is Proposition 6.3, 
and item (iv) is the content of Proposition 6.8, 
all from \cite{BeLuTh}.  
\end{proof}

\begin{proof}[Proof of Theorem \ref{IRSMainThm}]
Suppose that the hypothesis on the IRSs holds. 
Let $(X_n)$ be a stability challenge for $\Gamma$, 
which by Proposition \ref{ConvStabChallProp} we can 
assume to be a convergent stability challenge. 
Let $\mu_n \in \IRS(\mathbb{F},\mathbf{X})$ be the 
IRS associated to $X_n$, 
so that there exists $\mu \in \IRS (\Gamma,\mathbf{S})$ 
with $\mu_n \rightarrow \mu$ in 
$\IRS(\mathbb{F},\mathbf{X})$. 
Let $(\Delta_n,\mathbf{T}_n)$ and $\nu_n \in \IRS (\Delta_n,\mathbf{T}_n)$ 
witness the partial cosoficity of $\mu$ 
in $(\Gamma,\mathbf{S})$. 
By Lemmas \ref{BiggerSetLem} and \ref{IRSApproxLem}, 
we may assume that there exists a finite $\Delta_n$-set $Y_n$, 
with $\lvert X_n\rvert = \lvert Y_n \rvert$, whose associated IRS is $\nu_n$. 
We have $\mu_n ,\nu_n \rightarrow \mu$, 
so $d_{\stat}(X_n,Y_n) \rightarrow 0$ 
by Proposition \ref{dstatProp} (i). 

Let $(\tilde{\Gamma},\mathbf{T}) = \bigotimes (\Delta_n,\mathbf{T}_n)$. 
Then we have epimorphisms of marked groups 
$t:(\tilde{\Gamma},\mathbf{T}) \twoheadrightarrow (\Gamma,\mathbf{S})$ 
and $p_n : (\tilde{\Gamma},\mathbf{T}) \twoheadrightarrow (\Delta_n,\mathbf{T}_n)$, 
so that the $Y_n$ are finite $\tilde{\Gamma}$-sets. 
Moreover $\tilde{\Gamma}$ is amenable 
by Lemma \ref{LiftAmenLem}, 
so by Proposition \ref{dstatProp} (ii) 
(applied to $(\tilde{\Gamma},\mathbf{T})$ 
instead of $(\Gamma,\mathbf{S})$), 
we conclude $d_{\gen}(X_n,Y_n) \rightarrow 0$. 
Viewing $X_n$ and $Y_n$ as finite $\mathbb{F}$-sets, 
$Y_n$ is thus a solution to the stability challenge 
$(X_n)$ for $\mathbb{F}$. 
Finally, since the action of $\mathbb{F}$ on $Y_n$ 
factors through the marked quotient 
$(\mathbb{F},\mathbf{X}) \twoheadrightarrow (\Delta_n,\mathbf{T}_n)$, 
and $(\Delta_n,\mathbf{T}_n)$ 
converges to $(\Gamma,\mathbf{S})$ in $\mathcal{G}_d$, 
every $r \in \ker (\pi_{\mathbf{S}})$ satisfies 
$\pi_{\mathbf{T}_n} (r) = e$ 
for all $n$ sufficiently large, 
hence for such $n$, 
$r$ lies in the kernel of the action of $\mathbb{F}$ 
on $Y_n$. 
Thus $(Y_n)$ is a local $\Gamma$-set, 
and hence a local solution for $\Gamma$ 
to the stability challenge $(X_n)$. 
By Proposition \ref{ConvStabChallProp} 
we conclude that $\Gamma$ is locally stable. 

Conversely, suppose that $\Gamma$ is locally stable, 
and let $\mu \in \IRS (\Gamma,\mathbf{S})$. 
Since $\Gamma$ is amenable, 
by \cite{BeLuTh}[Proposition 6.6] there exist finite-index IRSs 
$\mu_n \in \IRS(\mathbb{F},\mathbf{X})$ with $\mu_n \rightarrow \mu$. 
By Lemma \ref{IRSApproxLem}, 
we may assume that there exists a finite 
$\mathbb{F}$-set $X_n$ with associated IRS $\mu_n$. 
By Lemma \ref{BLT75Lem}, $(X_n)$ is a stability challenge for $\Gamma$. 
By local stability, $(X_n)$ has a local solution $(Y_n)$ for $\Gamma$, 
by Proposition \ref{StabChallProp}. 
Now recall that there is a sequence 
of marked finitely presented groups $(\Gamma_n , \mathbf{S}_n)$ 
such that: 
\begin{itemize}
\item[(i)] There exist marked epimorphisms 
$(\Gamma_n , \mathbf{S}_n) \twoheadrightarrow (\Gamma_{n+1} , \mathbf{S}_{n+1})$ 
and $(\Gamma_n , \mathbf{S}_n) \twoheadrightarrow (\Gamma,\mathbf{S})$; 

\item[(ii)] $(\Gamma_n , \mathbf{S}_n)$ converges to $(\Gamma,\mathbf{S})$ 
in $\mathcal{G}_d$. 

\end{itemize}

Specifically, let $(r_n)$ be an increasing sequence 
of positive integers; let $N_n \in \mathcal{G}_d$ 
be the normal closure in $\mathbb{F}$ 
of $B_{\mathbf{X}} (r_n) \cap \ker (\pi_{\mathbf{S}})$, 
and let $(\Gamma_n,\mathbf{S}_n)$ be the $d$-marked 
group associated to $N_n$. 

Passing to subsequences, we may assume that $Y_n$ is a 
$\Gamma_n$-set by Remark \ref{MarkedGrpsRmrk} (in its more general version). 
Let $\mu_n ' \in \IRS(\Gamma_n , \mathbf{S}_n)$ be the IRS associated to $Y_n$ 
(a finitely supported finite-index IRS of $\Gamma_n$). 
Then $\mu_n ' \rightarrow \mu$ in $\IRS(\mathbb{F},\mathbf{X})$, 
by Proposition \ref{dstatProp} (ii) and (iii). 
Now, since $\Gamma$ is amenable locally stable, it is LEF by Lemma \ref{amenwlslem}, 
so there exists a sequence $(\Lambda_n,\mathbf{U}_n)$ 
of marked finite groups converging to $(\Gamma,\mathbf{S})$ in $\mathcal{G}_d$. 
Passing to subsequences, we may assume 
(again, by the more general form of Remark \ref{MarkedGrpsRmrk}) 
that there exists a marked epimorphism 
$\rho_n : (\Gamma_n , \mathbf{S}_n) \twoheadrightarrow (\Lambda_n,\mathbf{U}_n)$, 
and that:
\begin{equation} \label{BallIsoEqn}
d \big( (\Gamma,\mathbf{S}) \big),(\Gamma_n,\mathbf{S}_n) \big) , 
d \big( (\Gamma,\mathbf{S}) \big),(\Lambda_n,\mathbf{U}_n) \big) , 
d \big( (\Gamma_n,\mathbf{S}_n),(\Lambda_n,\mathbf{U}_n) \big) \leq 2^{-n}
\end{equation}
in $\mathcal{G}_d$. 
Write: 
\begin{equation*}
\mu_n ' = \sum_{i=1} ^{M_n} \lambda_{n,i} \delta_{H_{n,i}}
\end{equation*}
for some finite-index subgroups $H_{n,i}$ of $\Gamma_n$. 
Let $K_{n,i}$ be the normal core of $H_{n,i}$ in $\Gamma_n$, and let: 
\begin{equation*}
K_n = \bigcap_{i=1} ^{M_n} K_{n,i}. 
\end{equation*}
Let $\pi_n : \Gamma_n \twoheadrightarrow \Lambda_n \times (\Gamma_n/K_n)$ 
be given by $\pi_n (g) = (\rho_n (g),gK_n)$; 
let $\mathbf{T}_n = \pi_n (\mathbf{S}_n)$ and 
set $\Delta_n = \im (\pi_n) \leq \Lambda_n \times (\Gamma_n/K_n)$, 
so that $\pi_n : (\Gamma_n , \mathbf{S}_n) \twoheadrightarrow (\Delta_n , \mathbf{T}_n)$ 
is a quotient of $d$-marked groups. 
Projection to the first factor of 
$\Lambda_n \times (\Gamma_n/K_n)$ 
induces a quotient of $d$-marked groups 
$(\Delta_n , \mathbf{T}_n) 
\twoheadrightarrow (\Lambda_n,\mathbf{U}_n)$. 
Since $\rho_n$ restricts to an isomorphism 
of balls of radius $n$, and $\rho_n$ factors through 
$(\Delta_n , \mathbf{T}_n)$, 
it follows from (\ref{BallIsoEqn}) that: 
\begin{equation} \label{BallIsoEqn2}
d\big( (\Gamma , \mathbf{S}),(\Delta_n , \mathbf{T}_n) \big) \leq 2^{-n}
\end{equation}
and $(\Delta_n , \mathbf{T}_n)$ converges in 
$\mathcal{G}_d$ to $(\Gamma , \mathbf{S})$. 
Define:
\begin{equation*}
\nu_n = \sum_{i=1} ^{M_n} \lambda_{n,i} \delta_{\pi_n (H_{n,i})} 
\in \IRS (\Delta_n,\mathbf{T}_n). 
\end{equation*}
Note that $\nu_n$ is indeed an IRS: 
since $\mu_n '$ is an IRS, 
the subgroups $H_{n,i}$ and the coefficients 
$\lambda_{n,i}$ satisfy the criterion 
described in Example \ref{PtMassEx}, 
hence so too do the $\pi_n (H_{n,i})$. 
We claim that $\nu_n \rightarrow \mu$ in $\IRS(\mathbb{F},\mathbf{X})$, 
which will complete the proof. 
It suffices to show that for all $r \in \mathbb{N}$, 
\begin{center}
$B_{\mathbf{X}} (r) \cap \pi_{\mathbf{S}_n} ^{-1} (H_{n,i}) 
= B_{\mathbf{X}} (r) \cap \pi_{\mathbf{T}_n} ^{-1} (\pi_n (H_{n,i})) $
\end{center}
for all $n$ sufficiently large, 
as in that case we have for all $r \in \mathbb{N}$ 
and $W \subseteq B_{\mathbf{X}} (r)$, 
\begin{center}
$\delta_{H_{n,i}} (C_{r,W}) = \delta_{\pi_n (H_{n,i})} (C_{r,W})$
\end{center}
for all $n$ sufficiently large, 
and since $\mu_n ' \rightarrow \mu$ 
the criterion (\ref{W*ConvCrit}) above applies. 

Since $\pi_{\mathbf{T}_n} = \pi_n \circ \pi_{\mathbf{S}_n}$, we have $\pi_{\mathbf{S}_n} ^{-1} (H_{n,i}) \subseteq \pi_{\mathbf{T}_n} ^{-1} (\pi_n (H_{n,i}))$. 
For the converse inclusion, 
let $w \in B_{\mathbf{X}} (r)$ and $h \in H_{n,i}$ be such that 
$\pi_n (h) = \pi_{\mathbf{T}_n}(w)$. 
Then there exists $v \in B_{\mathbf{X}} (r)$ such that 
$\pi_{\mathbf{S}_n} (v) = h$, 
so $v w^{-1}\in\ker (\pi_{\mathbf{T}_n})\cap B_{\mathbf{X}} (2r)$. 
For $n > 2r$, 
and by (\ref{BallIsoEqn}) and (\ref{BallIsoEqn2}), 
$\pi_{\mathbf{S}_n} (w) = \pi_{\mathbf{S}_n} (v) = h$, 
as desired. 
\end{proof}

\begin{ex} \label{AltEnrichIRSEx}
\normalfont
Since the group $\mathcal{A}(\mathbb{Z})$ is amenable, given Theorem \ref{AltEnrLocStabThm} 
we may apply Theorem \ref{IRSMainThm} 
to deduce that every IRS of $\mathcal{A}(\mathbb{Z})$ 
is a limit of IRSs of some finite groups 
converging to $\mathcal{A}(\mathbb{Z})$ in 
$\mathcal{G}_2$. 
Alternatively, one could deduce 
Theorem \ref{AltEnrLocStabThm} 
from Theorem \ref{IRSMainThm} by constructing 
such finite groups and their IRSs directly. 
Let us briefly sketch how this may be done. 
The ergodic IRSs of $\mathcal{A}(\mathbb{Z})$ are 
described in \cite{LeLu}[Section 4]. 
They arise either as atomic measures on finite-index 
normal subgroups of $\mathcal{A}(\mathbb{Z})$, 
or are supported on subgroups of $\FAlt(\mathbb{Z})$. 
By a result of Vershik, the latter class of IRSs 
arise as the stabilizers of a random colouring 
of the integers. That is to say, 
they are pushforwards $\nu_{\alpha}$ 
under the stabilizer map 
of the $\FAlt(\mathbb{Z})$-invariant ergodic probability 
measures $\mu_{\alpha}$ on the space of colourings of 
$\mathbb{Z}$, 
according to which 
each integer is independently coloured 
according to the random variable 
$\alpha \in [0,1]^C$, 
for $C$ some countable set of colours. 

Now let $\mathbf{T} (n)$ be the $2$-markings 
of the finite groups $\Alt ( \lBrack n \rBrack )$ 
described in Proposition \ref{AltEnriLEFProp}, 
so that the sequence  
$\big( \Alt ( \lBrack n \rBrack ),\mathbf{T} (n) \big)$ converges in $\mathcal{G}_2$ 
to $(\mathcal{A}(\mathbb{Z}),\mathbf{S})$. 
Given a colour distribution $\alpha$, 
colour each point of $\lBrack n \rBrack$ 
independently according to $\alpha$ 
to obtain an $\Alt ( \lBrack n \rBrack )$-invariant 
probability measure $\mu_{\alpha,n}$ 
on the set of colourings of $\lBrack n \rBrack$. 
Then $\nu_{\alpha,n} = \Stab_{\ast}(\mu_{\alpha,n})$ 
is an IRS of $\Alt (\lBrack n\rBrack )$, 
and the sequence $(\nu_{\alpha,n})_n$ 
converges in $\IRS (\mathbb{F})$ to $\nu_{\alpha}$. 
\end{ex}

\section{Topological full groups of minimal subshifts} 
\label{TFGSect}

Let $X$ be the Cantor space, and let $T : X \rightarrow X$ be a homeomorphism. 
We refer to the pair $(X,T)$ as a \emph{Cantor dynamical system}. 
The system $(X,T)$ is \emph{minimal} if every orbit in $X$ 
under the action of $\langle T \rangle$ is dense in $X$. 
Henceforth assume that $(X,T)$ is a 
minimal Cantor dynamical system. 

\begin{defn}
The \emph{topological full group} $\lBrack T \rBrack$ 
of the Cantor dynamical system $(X,T)$ 
is the set of all homeomorphisms $g$ of $X$ 
such that there exists a continuous function 
$f_g : X \rightarrow \mathbb{Z}$ 
(called the \emph{orbit cocycle} of $g$) such that 
for all $x \in X$, $g (x) = T^{f_g (x)} (x)$ 
(here we assume $\mathbb{Z}$ equipped with the discrete topology). 
\end{defn}

Equivalently, $g \in \Homeo (X)$ 
lies in $\lBrack T \rBrack$ if there is 
a finite clopen partition $C_1 , \ldots , C_d$ of $X$ 
and integers $a_1 , \ldots , a_d$ such that for $1 \leq i \leq d$, 
$g|_{C_i} = T^{a_i} |_{C_i}$ 
(taking $\lbrace a_1,\ldots,a_d\rbrace = \im(f_g)$, 
$C_i = f_g ^{-1}(a_i)$). 
It is straightforward to check that $\lBrack T \rBrack$ 
is a subgroup of $\Homeo(X)$. 

\begin{rmrk} \label{OrbitCocycRmrk}
\normalfont
We note some immediate consequences of the definition: 
\begin{itemize}
\item[(i)] The orbit cocycle $f_g$ is uniquely determined by 
$g \in \lBrack T \rBrack$, 
since, by minimality, $T$ has no finite orbits on $X$. 
\item[(ii)] For $g,h \in \lBrack T \rBrack$ 
and $x \in X$, we have the cocycle relation: 
\begin{equation*}
f_{gh} (x) = f_g (h(x)) + f_h (x). 
\end{equation*}
\end{itemize}
\end{rmrk}

The group $\lBrack T \rBrack$ and its derived subgroup $\lBrack T \rBrack^{\prime}$ 
have a remarkable collection of group-theoretic 
properties. 

\begin{thm}[Theorems 4.9 and 5.4 of \cite{Mat}] \label{FullGrpSimpleThm}
For any $(X,T)$ as above, 
$\lBrack T \rBrack^{\prime}$ is an infinite simple group. 
If $(X,T)$ is a minimal subshift then $\lBrack T \rBrack^{\prime}$ is finitely 
generated. 
\end{thm}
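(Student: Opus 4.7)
The plan is to handle the two assertions separately, using classical techniques for simplicity of full-group derived subgroups, and combinatorial features of subshifts for finite generation.

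\textbf{Infinite simple group.} Infinitude is immediate from minimality: $X$ is perfect, and on any clopen set one can produce non-trivial elements of $\lBrack T \rBrack^{\prime}$ using first-return maps of $T$, giving infinitely many distinct elements of $\lBrack T \rBrack^{\prime}$. For simplicity, let $N \triangleleft \lBrack T \rBrack^{\prime}$ be non-trivial and fix $e \neq g \in N$. By continuity, there is a non-empty clopen $U \subseteq X$ with $g(U) \cap U = \emptyset$. The first step is to exhibit a non-trivial $h \in \lBrack T \rBrack^{\prime}$ supported in $U$: by minimality, $T$ has no finite orbits and the first-return map to any clopen $V \subseteq U$ is well defined, so one can pick a sub-clopen $V_0 \subseteq V$ with constant first-return time $n$ large enough that $V_0, T^n V_0, T^{2n} V_0$ are pairwise disjoint and all contained in $U$; cycling these three clopen sets produces an element of $\lBrack T \rBrack$ whose commutator with a similar three-cycle on an overlapping triple gives the desired $h \in \lBrack T \rBrack^{\prime}$. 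Then $[g, h] \in N$ is supported on $U \cup g(U)$ and is non-trivial. The second step is an Epstein-style fragmentation argument: conjugating $[g, h]$ by elements of $\lBrack T \rBrack^{\prime}$ that rearrange clopen subsets of $U \cup g(U)$, and translating around $X$ via the minimal $T$-action, shows that $N$ contains elements supported on arbitrarily small clopen sets. Since such elements generate $\lBrack T \rBrack^{\prime}$ (a standard fact for full groups of Cantor minimal systems), we conclude $N = \lBrack T \rBrack^{\prime}$.

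\textbf{Finite generation for minimal subshifts.} Write $X \subseteq A^{\mathbb{Z}}$ with $T$ the shift and $A$ a finite alphabet, and let $\mathcal{L}(X) \subseteq A^{\ast}$ be the language. The starting observation is that any $g \in \lBrack T \rBrack$ is determined by a block map: there exist $r, s \in \mathbb{N}$ and $\phi : \mathcal{L}(X) \cap A^{r+s+1} \to \mathbb{Z}$ with $f_g(x) = \phi(x_{-r} \cdots x_s)$. Consequently $\lBrack T \rBrack$ is the ascending union of the finite subgroups $G_{r,s}$ of elements with cocycles of this form, finiteness coming from the finite subword complexity of $\mathcal{L}(X)$. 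The plan is then to fix one pair $(r, s)$ and show that $G_{r,s}$ together with $T$ generates all of $\lBrack T \rBrack$: given $g$ realised by a block map on a longer window $A^{r'+s'+1}$, I would decompose $g$ as a product of conjugates $T^k h T^{-k}$ with $h \in G_{r,s}$, using the fact that long admissible patterns in $\mathcal{L}(X)$ can be read off, via shifts, from short overlapping ones. A finite generating set for $\lBrack T \rBrack$ then yields a finite generating set for the finite-index subgroup $\lBrack T \rBrack'$ via standard commutator-and-Schreier arguments, together with Theorem~\ref{FullGrpSimpleThm}'s perfectness of $\lBrack T \rBrack^{\prime}$ (which makes $\lBrack T \rBrack / \lBrack T \rBrack^{\prime}$ manageable).

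The main obstacle is precisely the combinatorial reduction in the second part: showing that block maps on arbitrarily long windows reduce, modulo conjugation by powers of $T$, to block maps on a single fixed window. This is delicate because the reduction must respect the global structure of $\mathcal{L}(X)$; I would organise it via Kakutani-Rokhlin towers whose bases and column shapes are described by words of uniformly bounded length, using minimality to refine the tower and the finite alphabet to control the number of distinct columns. This is the technical heart of Matui's argument, and it is also the step where the restriction to subshifts (rather than general Cantor minimal systems, where finite generation can fail) is essential.
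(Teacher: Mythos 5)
Note first that the paper does not prove this statement at all: it is quoted verbatim from Matui (Theorems 4.9 and 5.4 of \cite{Mat}), so what you are proposing is a re-proof of Matui's theorems. Your sketch of simplicity has the standard shape (find clopen $U$ with $g(U)\cap U=\emptyset$, build a nontrivial $h\in\lBrack T\rBrack'$ supported in $U$, observe $[g,h]\in N$ is nontrivial with small support, then fragment), but the two assertions you leave unproved --- that $N$ then contains \emph{all} elements supported on arbitrarily small clopen sets, and that such elements generate $\lBrack T\rBrack'$ --- are precisely the content of Matui's Theorem 4.9; as written this half is an acceptable outline but not a proof.

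The finite generation half contains genuine errors, not just gaps. First, the sets $G_{r,s}$ are neither finite nor subgroups: $f_{T^n}\equiv n$ is determined by the trivial window, so every power of $T$ lies in $G_{0,0}$; and the cocycle identity $f_{gh}(x)=f_g(h(x))+f_h(x)$ shows that composing two elements with window $[-r,s]$ requires reading coordinates in a window translated by $f_h(x)$, so $G_{r,s}$ is not closed under multiplication. (Indeed, if $\lBrack T\rBrack$ were an ascending union of finite subgroups it would be torsion, contradicting that $T$ has infinite order, since minimality excludes periodic points.) Second, and fatally for your reduction, $\lBrack T\rBrack'$ does \emph{not} have finite index in $\lBrack T\rBrack$: any $T$-invariant probability measure $\mu$ is invariant under the whole full group, so $g\mapsto\int_X f_g\,d\mu$ is a homomorphism $\lBrack T\rBrack\to\mathbb{R}$ sending $T$ to $1$ and killing commutators; hence the abelianization is infinite and the index of $\lBrack T\rBrack'$ is infinite. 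So the ``commutator-and-Schreier'' step collapses, and even a correct proof that $\lBrack T\rBrack$ is finitely generated would not yield finite generation of its derived subgroup (derived subgroups of finitely generated groups are in general infinitely generated, e.g.\ in nonabelian free groups). Matui's argument runs in the opposite direction: expansiveness is used to produce an explicit finite generating set for $\lBrack T\rBrack'$ itself, built from finite-order elements attached to Kakutani--Rokhlin data with uniformly bounded windows, with no appeal to generation of the ambient full group.
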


It shall not concern us much exactly what a \emph{minimal subshift} 
is by definition, beyond the conclusion of Theorem \ref{FullGrpSimpleThm} and 
Theorem \ref{ManySubshiftsThm} below. 

\begin{thm}[Theorem 5.1 of \cite{GrigMedy}] \label{GrigMedyThm}
For any minimal Cantor dynamical system $(X,T)$, $\lBrack T \rBrack$ is LEF. 
\end{thm}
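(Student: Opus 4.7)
The plan is to verify the LEF property via the characterization in Proposition \ref{LEFStandardProp}: for every finite $A \subseteq \lBrack T \rBrack$ we must exhibit a finite group $Q$ and a local embedding $\phi : A \to Q$. Given such an $A$, enlarge it to a finite set $\tilde{A} \supseteq A \cup \{gh : g,h \in A\}$, and set $M := \max_{g \in \tilde{A}} \lVert f_g \rVert_\infty$, which is finite because each orbit cocycle $f_g : X \to \mathbb{Z}$ is continuous on the compact space $X$.

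The main preparatory step is to choose a Kakutani-Rokhlin partition, with clopen bases $B_1, \ldots, B_m$ and tower heights $h_1, \ldots, h_m$, so that $X = \bigsqcup_{i=1}^m \bigsqcup_{k=0}^{h_i - 1} T^k B_i$ and the following three properties hold: (i) each $f_g$, $g \in \tilde{A}$, is constant on every atom $T^k B_i$; (ii) $h_i > M$ for all $i$; (iii) there is a successor map $j : \{1,\ldots,m\} \to \{1,\ldots,m\}$ such that $T^{h_i}$ restricts to a bijection $B_i \to B_{j(i)}$ for every $i$. Since each $f_g$ is locally constant with finite image, (i) holds after refining the bases; (ii) is arranged by taking the initial base clopen set to be small enough; and (iii) is a clean-top condition obtained by a standard simultaneous refinement based on first-return dynamics.

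Let $L$ denote the finite set of atoms of the partition, and define $\tilde{T} \in \Sym(L)$ by $\tilde{T}(T^k B_i) = T^{k+1} B_i$ if $k+1 < h_i$, and $\tilde{T}(T^{h_i - 1} B_i) = B_{j(i)}$; this is a bijection by (iii). For each $g \in \tilde{A}$ define $\sigma_g \in \Sym(L)$ pointwise by $\sigma_g(\alpha) = \tilde{T}^{f_g(\alpha)}(\alpha)$. Using (iii) one checks that $\sigma_g$ faithfully models the set-level action of $g$ on atoms: within a single tower this is immediate from (i) and (ii), and when the power of $\tilde{T}$ wraps around through the top of tower $i$, the identification $T^{h_i}(B_i) = B_{j(i)}$ makes $\tilde{T}$ agree with $T$ atom by atom. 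The cocycle identity $f_{gh}(x) = f_g(h(x)) + f_h(x)$ from Remark \ref{OrbitCocycRmrk} then yields $\sigma_{gh} = \sigma_g \sigma_h$ whenever $g, h, gh \in \tilde{A}$, so $\phi : A \to \Sym(L)$, $g \mapsto \sigma_g$, satisfies the partial homomorphism property required of a local embedding. Injectivity on $A$ follows from (ii): if $\sigma_g = \id_L$ then $f_g(\alpha)$ is divisible by the length of the $\tilde{T}$-cycle through $\alpha$ for every atom $\alpha$, but every $\tilde{T}$-cycle has length at least $\min_i h_i > M \geq \lvert f_g(\alpha) \rvert$, forcing $f_g \equiv 0$ and hence $g = \id_X$ by minimality of $(X,T)$.

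The hardest part of the argument is securing condition (iii). Without it $\tilde{T}$ is only a partial bijection on $L$, and the identity $\sigma_{gh} = \sigma_g \sigma_h$ breaks down at atoms whose $g$-trajectory under $\tilde{T}$ crosses a tower boundary; once (iii) is in hand the remainder is routine bookkeeping via the cocycle relation and the height bound $h_i > M$.
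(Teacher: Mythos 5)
There is a genuine gap, and it is exactly where you locate the weight of the argument: condition (iii) cannot be arranged for a general minimal Cantor system. For your $\tilde{T}$ to be a permutation of the atom set $L$, the successor map $j$ must be injective. But the images $T^{h_1}B_1,\ldots,T^{h_m}B_m$ always partition the base $B(\Xi)$ (since $T$ maps the roof onto the base), so injectivity of $j$ together with $T^{h_i}B_i\subseteq B_{j(i)}$ forces $T^{h_i}B_i=B_{j(i)}$ with $j$ a bijection, and minimality forces $j$ to be a single $m$-cycle. Concatenating the towers along this cycle exhibits $X$ as one circular tower: $X=\bigsqcup_{k=0}^{H-1}T^kB$ with $T^HB=B$ and $H=\sum_i h_i$. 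The map sending $x\in T^kB$ to $k$ is then a factor map onto the rotation on $\mathbb{Z}/H\mathbb{Z}$, i.e.\ $e^{2\pi i/H}$ is a continuous eigenvalue of $(X,T)$. A topologically weakly mixing minimal subshift (such systems exist in abundance) has no nontrivial continuous eigenvalues, so the only partition satisfying your (iii) is the trivial one with $H=1$, which is incompatible with $h_i>M\geq 1$ as soon as $A$ contains a nontrivial element. The ``standard simultaneous refinement based on first-return dynamics'' only gets you halfway: refining $B_i$ into the pieces $B_i\cap T^{-h_i}B_v$ makes each roof atom map \emph{into} a single base, but it cannot make the resulting successor assignment injective, and without that $\tilde{T}\notin\Sym(L)$ and the identity $\sigma_{gh}=\sigma_g\sigma_h$ collapses at tower boundaries.

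The rest of your argument (conditions (i) and (ii), the cocycle bookkeeping, and the injectivity argument from the height bound) is sound, but it all rides on (iii), so the proof as written does not establish the theorem. The route recorded in the paper, following Grigorchuk--Medynets, avoids modelling $T$ itself by a permutation of atoms: each $g$ in the finite set is factored as $g=\pi_n(g)\rho_n(g)$, where $\pi_n(g)$ is an $n$-permutation preserving every tower of $\Xi_n$ and $\rho_n(g)$ is an $n$-rotation, and the local embedding sends $g$ to the atom permutation induced by $\pi_n(g)$ alone (Theorem \ref{GrigMedyMainThm}). In other words, the wrap-around at tower tops is quarantined in the rotation factor, whose effect becomes negligible as the heights grow, rather than being encoded by a cyclic successor structure on the bases --- which, as above, would impose spectral hypotheses on $(X,T)$ that a general minimal system does not satisfy.
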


\begin{thm}[\cite{JusMon}] \label{JusMonThm}
For any minimal Cantor dynamical system $(X,T)$, $\lBrack T \rBrack$ is amenable. 
\end{thm}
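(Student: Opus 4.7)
The plan is to follow the strategy of Juschenko and Monod, based on the theory of \emph{extensively amenable actions}. Fix a base point $x_0 \in X$ and identify the $\langle T \rangle$-orbit of $x_0$ with $\mathbb{Z}$ via $n \leftrightarrow T^n(x_0)$. The orbit cocycle $f_g$ from Remark \ref{OrbitCocycRmrk} makes every $g \in \lBrack T \rBrack$ act on this orbit by a bijection $\sigma_g : n \mapsto n + f_g(T^n(x_0))$; by minimality of $(X,T)$, the resulting homomorphism $\lBrack T \rBrack \to \Sym(\mathbb{Z})$ is faithful. Since $f_g$ is a continuous $\mathbb{Z}$-valued function on the compact Cantor space $X$, it takes only finitely many values, so each $\sigma_g$ is a \emph{wobbling bijection}: there is a constant $C(g) > 0$ with $|\sigma_g(n) - n| \leq C(g)$ for all $n \in \mathbb{Z}$.

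Recall that an action $G \curvearrowright Y$ is \emph{extensively amenable} if there is a $G$-invariant mean on the set $\mathcal{P}_{\mathrm{fin}}(Y)$ of finite subsets of $Y$ whose marginals concentrate, in a suitable sense, near $\emptyset$; equivalently, the semidirect product $\bigl(\bigoplus_Y \mathbb{Z}/2\bigr) \rtimes G$ admits an invariant mean relative to the natural translation action of $\bigoplus_Y \mathbb{Z}/2$. The key reduction I would use is the following principle from \cite{JusMon}: if $G$ acts faithfully by wobbling bijections on a set $Y$ and the resulting action is extensively amenable, then $G$ is amenable. Applied to our setting, this shifts the entire burden onto proving that the wobbling action $\lBrack T \rBrack \curvearrowright \mathbb{Z}$ is extensively amenable.

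To establish extensive amenability, I would invoke the recurrence criterion exploited in \cite{JusMon}: if a group $H$ acting by bounded-displacement bijections on $Y$ admits a symmetric, finitely supported probability measure whose induced random walk on every $H$-orbit in $Y$ is recurrent, then $H \curvearrowright Y$ is extensively amenable. In our situation there is a single orbit, namely all of $\mathbb{Z}$, and the cyclic subgroup $\langle T \rangle \leq \lBrack T \rBrack$ already acts on $\mathbb{Z}$ by unit translations, so the measure $\tfrac{1}{2}(\delta_T + \delta_{T^{-1}})$ drives precisely the simple symmetric random walk on $\mathbb{Z}$, which is classically recurrent. This recurrence, together with the fact that every element of $\lBrack T \rBrack$ acts with uniformly bounded displacement on each bounded interval of $\mathbb{Z}$, should furnish extensive amenability.

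The main obstacle, and the genuinely deep content of \cite{JusMon}, is precisely this last implication: the derivation of extensive amenability of the wobbling action from mere recurrence of the random walk. This requires a careful construction of an invariant mean on $\mathcal{P}_{\mathrm{fin}}(\mathbb{Z})$ out of the Green kernel of the random walk, and is the step where I would have to invoke (rather than reprove) the technical core of \cite{JusMon}. Once that is in hand, both the wobbling reduction and the conclusion that $\lBrack T \rBrack$ is amenable follow in a formal manner, completing the proof.
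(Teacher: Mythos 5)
The paper gives no proof of this statement; it is imported wholesale from \cite{JusMon}, so your sketch has to be measured against the Juschenko--Monod argument itself. Your opening moves are correct and are indeed how that argument begins: by minimality the $\langle T\rangle$-orbit of $x_0$ is free and dense, so $g \mapsto \sigma_g$ is a faithful homomorphism into $\Sym(\mathbb{Z})$, and continuity of $f_g$ on the compact space $X$ gives bounded displacement. Two smaller inaccuracies in the middle are repairable but worth flagging: extensive amenability requires an invariant mean on $\mathcal{P}_{\mathrm{fin}}(Y)$ giving full weight to $\lbrace E : y \in E \rbrace$ for every $y$ (concentration on \emph{large} finite sets, not ``near $\emptyset$'', which would be vacuously satisfied by $\delta_{\emptyset}$); and the recurrence criterion must be applied to each finitely generated group of bounded-displacement permutations with a measure whose support generates it, not to the walk driven by $\tfrac12(\delta_T+\delta_{T^{-1}})$, which only sees $\langle T\rangle$. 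The usable input is that orbital Schreier graphs of finitely generated wobbling groups on $\mathbb{Z}$ have linear growth and are therefore recurrent.

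The genuine gap is your ``key reduction'': the claim that a faithful wobbling action which is extensively amenable forces the acting group to be amenable is not a theorem of \cite{JusMon}, and it is false as stated. For a counterexample, let $F_2$ act on the disjoint union of the finite Schreier graphs $F_2/H_w$, where for each $w \neq e$ the subgroup $H_w$ is a finite-index subgroup avoiding $w$: the action is faithful, each generator moves every point to a neighbour of a bounded-degree graph (so it is by bounded-displacement bijections), and since all orbits are finite the point masses on an exhaustion by finite invariant subsets produce an invariant mean witnessing extensive amenability; yet $F_2$ is not amenable. The correct general principle is that an extensively amenable action \emph{all of whose point stabilizers are amenable} has an amenable acting group, and applied naively to $\lBrack T \rBrack \curvearrowright \mathbb{Z}$ this is useless, because the stabilizer of a point of the orbit is essentially as hard as $\lBrack T \rBrack$ itself. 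Juschenko and Monod therefore pass through the commensurated half-orbit $\mathbb{N} \subset \mathbb{Z}$ (every bounded-displacement permutation commensurates it): the recurrence/extensive-amenability input supplies the invariant mean for that commensurating action, and the decisive further step --- entirely absent from your sketch --- is to prove that the setwise stabilizer of $\mathbb{N}$ in $\lBrack T \rBrack$ is amenable, which is a separate argument using the specific structure of topological full groups of minimal systems. Only with that stabilizer input does the ``invariant mean plus amenable stabilizer'' lemma yield amenability of $\lBrack T \rBrack$. Deferring the analytic core to \cite{JusMon} is legitimate, but your reduction skips precisely the step at which the topological full group, rather than an arbitrary wobbling group, enters the proof.
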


Theorems \ref{GrigMedyThm} and \ref{JusMonThm} 
already imply that $\lBrack T \rBrack$ is always 
weakly locally stable, by Lemma \ref{amenwlslem}. 
Further, for $(X,T)$ a minimal subshift, 
our criterion for local stability from Theorem \ref{IRSMainThm} 
is applicable to $\lBrack T \rBrack^{\prime}$. 
The classification of IRSs of 
$\lBrack T \rBrack^{\prime}$ 
is provided by the next result. 

\begin{thm}[Corollary 1.4 of \cite{Zheng}] \label{ZhengThm}
Let $(X,T)$ be a minimal Cantor dynamical system. 
Let $\mu$ be an ergodic IRS of $\lBrack T \rBrack^{\prime}$. 
Then either: 
\begin{itemize}
\item[(i)] $\mu = \delta_{\lbrace e\rbrace}$ 
or $\delta_{\lBrack T \rBrack^{\prime}}$

\item[or (ii)] There exists $k \in \mathbb{N}$ and 
$T$-invariant 
ergodic probability measures $\nu_i$ on $X$ 
such that $\mu$ is the pushforward 
of $\nu_1 \times \cdots \times \nu_k$ 
under the map 
$\Stab : X^k \rightarrow \Sub\big( \lBrack T \rBrack^{\prime} \big)$, 
given by: 
\begin{equation*}
\Stab_k (x_1 ,\ldots ,x_k)=\bigcap_{i=1} ^k \Stab_{\lBrack T \rBrack^{\prime}} (x_i). 
\end{equation*}
That is, $\mu = \Stab_{\ast} (\nu_1 \times \cdots \times \nu_k)$, 
where $\lBrack T \rBrack^{\prime}$ acts diagonally 
on $X^k$. 
\end{itemize}
\end{thm}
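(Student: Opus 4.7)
The plan is to classify ergodic IRS of $\Gamma := \lBrack T \rBrack^{\prime}$ by recovering each nontrivial one from a random finite configuration of points in $X$. The key structural ingredients are the simplicity of $\Gamma$ (Theorem \ref{FullGrpSimpleThm}), the minimality of $(X,T)$, and the abundance of finitely-supported elements in $\Gamma$ coming from small permutations of $\langle T\rangle$-orbit segments inside clopen subsets of $X$. Throughout, for $H\leq\Gamma$ write
\begin{equation*}
\mathrm{Fix}(H)=\{x\in X:hx=x\text{ for all }h\in H\},
\end{equation*}
a closed subset of $X$ depending measurably on $H$.

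The first step is a dichotomy for the random closed set $\mathrm{Fix}(H)$: for $\mu$-a.e.\ $H$, either $\mathrm{Fix}(H)=X$ (equivalently, by faithfulness of the $\Gamma$-action on $X$, $H=\{e\}$) or $\mathrm{Fix}(H)$ is a finite set. A non-empty interior point of $\mathrm{Fix}(H)$ produces, via the normal closure of $H$ and simplicity of $\Gamma$, only the trivial possibility $H=\{e\}$; and the case of $\mathrm{Fix}(H)$ being infinite with empty interior is ruled out by an ergodicity argument, exploiting finitely-supported elements of $\Gamma$ to shift prescribed orbit points in or out of the fixed set without disturbing others, contradicting $\Gamma$-invariance of the pushforward. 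The second and main step is a \emph{stabilizer-rigidity lemma}: for $\mu$-a.e.\ $H$,
\begin{equation*}
H=\mathrm{Stab}_{\Gamma}\!\left(\mathrm{Fix}(H)\right).
\end{equation*}
One inclusion is tautological; the reverse is the hard direction. Arguing by contradiction, if some $g\in\mathrm{Stab}_{\Gamma}(\mathrm{Fix}(H))\setminus H$ occurred with positive $\mu$-probability, then commutators of $g$ with elements of $\Gamma$ supported off $\mathrm{Fix}(H)$ yield a controlled family of nontrivial elements that must typically lie in $H$ by conjugation-invariance of $\mu$, and one engineers a contradiction with the finiteness of $\mathrm{Fix}(H)$.

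This rigidity lemma is the genuine obstacle; it is exactly the place where the very rich local structure of topological full groups (as opposed to arbitrary simple groups acting on Cantor space) must be brought to bear, and it will require careful bookkeeping of supports and conjugations. Once it is in hand the classification follows cleanly: the pushforward of $\mu$ under $H\mapsto\mathrm{Fix}(H)$ is a $\Gamma$-invariant probability measure $\tilde\mu$ on the set of finite subsets of $X$, and ergodicity forces the cardinality $k=|\mathrm{Fix}(H)|$ to be $\mu$-almost-surely constant. Since every element of $\Gamma$ preserves each $\langle T\rangle$-orbit and $\Gamma$ is orbit-transitive on each such orbit, the $\Gamma$-orbit equivalence relation on $X$ coincides with the $T$-orbit equivalence relation; hence the $\Gamma$-invariant ergodic probability measures on $X^{k}$ supported where coordinates are pairwise distinct are exactly the products $\nu_{1}\times\cdots\times\nu_{k}$ of $T$-invariant ergodic probability measures on $X$. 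Lifting $\tilde\mu$ to ordered tuples in $X^{k}$ yields the description in (ii), while the degenerate cases $k=0$ and $\mathrm{Fix}(H)=X$ supply the two trivial atoms in (i).
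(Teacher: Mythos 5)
This theorem is not proved in the paper at all: it is imported verbatim as Corollary 1.4 of Zheng's paper, so the benchmark here is Zheng's own argument (a substantial analysis of rigid stabilizers and invariant random finite subsets), not a short in-paper proof. Your outline does identify the right objects --- fixed-point sets $\mathrm{Fix}(H)$, an induced invariant random finite subset of $X$, and point stabilizers --- but every load-bearing step is asserted rather than proved, and those steps are precisely the content of Zheng's theorem. Concretely: (a) the almost-everywhere dichotomy ``$\mathrm{Fix}(H)=X$ or $\mathrm{Fix}(H)$ finite'' is only gestured at (``an ergodicity argument, exploiting finitely-supported elements''); (b) the rigidity statement $H=\Stab_{\lBrack T\rBrack'}\big(\mathrm{Fix}(H)\big)$ for $\mu$-a.e.\ $H$ is exactly the heart of the classification, and you yourself concede it is ``the genuine obstacle'' whose proof ``will require careful bookkeeping'' that you do not supply; (c) the final claim that ergodic $\lBrack T\rBrack'$-invariant probability measures on $X^{k}$ with pairwise distinct coordinates are exactly products $\nu_{1}\times\cdots\times\nu_{k}$ of $T$-invariant ergodic measures does not follow from the coincidence of the $\lBrack T\rBrack'$- and $T$-orbit equivalence relations, as your ``hence'' suggests: one must in particular rule out invariant mass on tuples lying in a single $T$-orbit (these form an invariant Borel set of pairwise distinct tuples) and then establish independence across coordinates, neither of which is automatic for orbit equivalence relations.

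There is also a concrete error in the first step as written: you claim that a nonempty interior point of $\mathrm{Fix}(H)$ forces $H=\lbrace e\rbrace$ ``via the normal closure of $H$ and simplicity.'' For an individual subgroup this is false: the (nontrivial) subgroup of all elements of $\lBrack T\rBrack'$ supported in a fixed clopen set $C$ fixes the clopen set $X\setminus C$ pointwise. Conjugating $H$ only translates $\mathrm{Fix}(H)$, so simplicity of $\lBrack T\rBrack'$ gives nothing pointwise; the statement can only be salvaged as a measure-theoretic assertion about $\mu$-a.e.\ $H$, using conjugation-invariance and ergodicity of $\mu$, and that argument is not given. In sum, the proposal is a plausible road map that reduces the theorem to exactly the hard lemmas proved by Zheng, but it does not prove any of them; as a justification of the cited result it has a genuine gap.
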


Our proof of local stability for the 
groups $\lBrack T \rBrack^{\prime}$ (Theorem \ref{FullGrpMainThm} below)
will be based on the proof of Theorem \ref{GrigMedyThm} given in \cite{GrigMedy}: 
we show that the marked finite groups converging to 
(some marking of) $\lBrack T \rBrack^{\prime}$ in the space of marked groups, 
which are constructed in the proof of that Theorem, 
admit IRSs converging to the IRSs of $\lBrack T \rBrack^{\prime}$ 
described in Theorem \ref{ZhengThm}. 
The argument goes as follows: 
a small clopen set $B$ of $X$ determines a clopen partition 
$\Xi$ of $X$ (the Kakutani-Rokhlin partition), 
on which $\lBrack T \rBrack^{\prime}$ admits a partial action 
by permutations, generating a group $\Delta(\Xi) \leq \Sym(\Xi)$. 
If $\nu_i$ are probability measures on $X$ (as in Theorem \ref{ZhengThm} (ii)), 
then $\nu_i$ imparts a mass to each point in $\Xi$. 
Pushing forward under the stabilizer map, 
we obtain an IRS $\nu_{\Xi}$ of $\Delta(\Xi)$. 
Taking a nested sequence of clopen sets $B_n$ (intersecting in a point) 
the sequence of finite groups $\Delta(\Xi_n)$ and their IRSs $\nu_{\Xi}$ 
will satisfy the conditions of Theorem \ref{IRSMainThm}. 

\begin{defn}
Let $(X,T)$ be a minimal Cantor dynamical system. 
A \emph{$T$-tower} is a finite family: 
\begin{center}
$\xi = \lbrace B , TB , \ldots , T^{h-1}B \rbrace$, 
\end{center}
where $B \subseteq X$ is a nonempty clopen set 
such that the sets $B , TB , \ldots , T^{h-1}B$ 
are pairwise disjoint. 
We refer to the positive integer $h$ as 
the \emph{height} of the $T$-tower $\xi$. 
A \emph{Kakutani-Rokhlin (K-R) partition} of $X$ is a finite clopen partition $\Xi$ of $X$ 
which is a disjoint union of $T$-towers, 
that is, a clopen partition of the form: 
\begin{equation} \label{KRPartDescr}
\Xi = \lbrace T^i B_v :
 1 \leq v \leq q ; 0 \leq i \leq h_v - 1 \rbrace
\end{equation}
The sets $T^i B_v$ are the \emph{atoms} of the partition $\Xi$. 
The sets: 
\begin{equation*}
B(\Xi) = \coprod_{v=1} ^q B_v \text{ and } H(\Xi) = \coprod_{v=1} ^q T^{h_v - 1} B_v
\end{equation*}
are called respectively the \emph{base} and the \emph{roof} of $\Xi$. 
\end{defn}

\begin{rmrk}
\normalfont
We note some easy consequences of the definition. 
\begin{itemize}
\item[(i)] Any $T$-tower of height $h \geq 2$ 
can be written as the disjoint union of two $T$-towers 
of smaller height. 
Thus the set of $T$-towers making up a K-R 
partition $\Xi$ (and their heights) 
are not intrinsic to the partition $\Xi$ itself; 
rather we consider the division of the atoms 
into $T$-towers to be part of the data of $\Xi$. 

\item[(ii)] Since $T$ is injective, it maps $H(\Xi)$ into $B(\Xi)$ 
(any point of $X$ not lying in $B(\Xi)$ is in the image under $T$ of some 
atom of $\Xi$ disjoint from $H(\Xi)$). 
Applying the same reasoning to $T^{-1}$, 
we have that $TH(\Xi) = B(\Xi)$. 
\end{itemize}
\end{rmrk}

Given a K-R partition $\Xi$ of the form (\ref{KRPartDescr}), 
we shall write $h(\Xi) = \min_{1 \leq v \leq q} h_v$ 
to denote the minimal height among the $T$-towers 
appearing in $\Xi$. 
The next construction is described in 
\cite{GrigMedy} Remark 3.3. 

\begin{lem} \label{RefineKRPartLem}
Let $\Xi$ be a K-R partition of $X$ 
and let $\Pi$ be a finite clopen partition of $X$. 
Then there exists a K-R partition 
$\Xi'$ of $X$ which is a common refinement 
of $\Xi$ and $\Pi$, 
such that $B(\Xi)=B(\Xi')$, $H(\Xi)=H(\Xi')$ and 
$h(\Xi) = h(\Xi')$. 
\end{lem}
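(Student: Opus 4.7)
The plan is to produce $\Xi'$ by subdividing the base of each tower of $\Xi$ according to how the iterates of $T$ meet the partition $\Pi$, and then reassembling the towers of the same heights.

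More concretely, write $\Xi$ in the form (\ref{KRPartDescr}), so its $T$-towers are $\tau_v = \lbrace B_v, TB_v, \ldots, T^{h_v-1}B_v\rbrace$ for $1 \leq v \leq q$. Let $P : X \rightarrow \Pi$ denote the function sending $x$ to the unique element of $\Pi$ containing it; since $\Pi$ is a finite clopen partition, $P$ is locally constant. For each $v$, define an equivalence relation $\sim_v$ on $B_v$ by declaring $x \sim_v y$ iff $P(T^i x) = P(T^i y)$ for all $0 \leq i \leq h_v - 1$. Because $\Pi$ is finite and $T$ is a homeomorphism, this relation has only finitely many equivalence classes, each of which is clopen; write $B_v = \bigsqcup_k C_{v,k}$ for this decomposition. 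Define $\Xi'$ to be the partition of $X$ with atoms $T^i C_{v,k}$, organized into towers $\tau'_{v,k} = \lbrace C_{v,k}, TC_{v,k}, \ldots, T^{h_v -1} C_{v,k}\rbrace$, each of height $h_v$.

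The remaining work is routine verification. Disjointness of the atoms of $\Xi'$ follows from disjointness of the $\tau_v$ together with disjointness of the $C_{v,k}$ inside $B_v$; their union is all of $X$ since $\Xi$ was already a partition. By construction $\Xi'$ refines $\Xi$, and it refines $\Pi$ since by definition of $\sim_v$ each set $T^i C_{v,k}$ is entirely contained in a single element of $\Pi$. The towers appearing in $\Xi'$ have the same heights as those of $\Xi$, so $h(\Xi') = h(\Xi)$; moreover
\begin{equation*}
B(\Xi') = \coprod_{v,k} C_{v,k} = \coprod_{v} B_v = B(\Xi),
\end{equation*}
and applying $T^{h_v - 1}$ tower by tower yields $H(\Xi') = H(\Xi)$ in the same way.

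There is no real obstacle here: the construction is forced once one insists on preserving the tower heights, and the only point requiring any attention is that each equivalence class $C_{v,k}$ be clopen, which is immediate because the conditions $P(T^i x) = \mathrm{const}$ for $0 \leq i \leq h_v - 1$ cut out clopen sets by continuity of each $T^i$ and local constancy of $P$.
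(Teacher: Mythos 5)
Your proof is correct and follows essentially the same route as the paper: the paper refines each tower base $B_v$ by a common refinement of the clopen partitions $\lbrace B_v \cap T^{-i}P : P \in \Pi\rbrace$, $0 \leq i \leq h_v-1$, and your equivalence classes under $\sim_v$ are exactly (the coarsest choice of) such a common refinement, after which the reassembly of towers of the same heights and the verification of $B$, $H$ and $h$ being preserved proceed identically.
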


\begin{proof}
Let $\lbrace B , TB , \ldots , T^{h-1} B \rbrace$ 
be a $T$-tower of $\Xi$. 
For each $0 \leq i \leq h-1$, 
$\Pi_i = \lbrace B \cap T^{-i}P : P \in \Pi \rbrace \setminus \lbrace \emptyset \rbrace$ 
is a finite clopen partition of $B$. 
Let $\lbrace C_1 , \ldots C_n \rbrace$ be any finite 
clopen partition of $B$ which is a common refinement 
of $\Pi_0 , \Pi_1 , \ldots , \Pi_{h-1}$. 
Then for $1 \leq j \leq n$, 
$\xi_j =\lbrace C_j,T C_j ,\ldots ,T^{h-1}C_j\rbrace$ 
is a $T$-tower of height $h$; 
each $T^i C_j$ is contained in $T^i B$ 
and in a unique element of $\Pi$, and: 
\begin{equation*}
\bigcup_{j=1 } ^n \bigcup_{C \in \xi_j} C = 
\bigcup_{i=0} ^{h-1} T^i B. 
\end{equation*}
Applying this construction to each $T$-tower of $\Xi$ 
yields the desired K-R partition $\Xi'$. 
\end{proof}

Our next Proposition is a summary of the content of \cite{GrigMedy} Section 3. 

\begin{propn} \label{KRBasicProp}
Let $(X,T)$ be a minimal Cantor dynamical system. 
For any increasing sequence $(m_n)$ of positive integers, 
there exists a sequence of K-R partitions: 
\begin{equation} \label{KR1-5eqn}
\Xi_n =\lbrace T^i B_v ^{(n)} :0\leq i\leq h_v ^{(n)} -1;v=1,\ldots ,v_n \rbrace
\end{equation} 
of $X$ satisfying the following: 
\begin{itemize}
\item[(i)] The union of the $\Xi_n$ generates the topology on $X$; 

\item[(ii)] $\Xi_{n+1}$ refines $\Xi_n$; 

\item[(iii)] $B(\Xi_{n+1}) \subseteq B(\Xi_n)$ 
and there exists $x_0 \in X$ such that 
$\bigcap_n B (\Xi_n) = \lbrace x_0 \rbrace$; 

\item[(iv)] For all $n$, 
$h(\Xi_n) \geq 2 m_n +2$; 

\item[(v)] For all $n$ and $-m_n - 1 \leq i \leq m_n$, 
\begin{equation*}
\diam \big( T^i B(\Xi_n) \big) < 1/n. 
\end{equation*} 
\end{itemize}
\end{propn}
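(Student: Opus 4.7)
The plan is to construct the $\Xi_n$ inductively: first choose a nested sequence of shrinking clopen bases $B_n$ converging to a distinguished point $x_0 \in X$, so that the first-return times to $B_n$ grow with $n$; then form the natural K-R partition $\Xi_n^{(0)}$ of $X$ determined by the first-return map to $B_n$; finally use Lemma \ref{RefineKRPartLem} to refine $\Xi_n^{(0)}$ into $\Xi_n$ while preserving the base and the minimal height, forcing the refinement condition (ii) and the topology-generation condition (i), after which (iii)--(v) will hold automatically from the choice of $B_n$.

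First I would fix $x_0 \in X$. Since $(X,T)$ is minimal on an infinite Cantor space, $T$ has no periodic points, so for each $n$ the iterates $x_0, T x_0, \ldots, T^{2m_n + 1} x_0$ are pairwise distinct and admit pairwise disjoint clopen neighbourhoods $U_{n,0}, \ldots, U_{n,2m_n+1}$. Taking $B_0 = X$, I then choose, inductively, a clopen neighbourhood $B_n$ of $x_0$ with
\begin{equation*}
B_n \subseteq B_{n-1} \cap \bigcap_{i=0}^{2m_n + 1} T^{-i}(U_{n,i}),
\end{equation*}
small enough to satisfy $\diam(B_n) < 1/n$ and $\diam(T^i B_n) < 1/n$ for $-m_n - 1 \le i \le m_n$; these extra diameter constraints are achievable by continuity of the finitely many iterates $T^i$ involved. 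The defining inclusions force $T^i B_n \subseteq U_{n,i}$ for $0 \le i \le 2m_n + 1$, so $B_n \cap T^i B_n = \emptyset$ for $1 \le i \le 2m_n + 1$, which means the first-return time to $B_n$ of every point of $B_n$ is at least $2m_n + 2$. The standard first-return construction then partitions $B_n$ into finitely many clopen pieces $B_v^{(n)}$ on which the return time is a constant $h_v^{(n)} \ge 2m_n + 2$, yielding a K-R partition $\Xi_n^{(0)}$ of the form $\lbrace T^i B_v^{(n)} : 0 \le i \le h_v^{(n)} - 1, \, 1 \le v \le v_n \rbrace$ with $B(\Xi_n^{(0)}) = B_n$ and $h(\Xi_n^{(0)}) \ge 2m_n + 2$.

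Next I would fix, for each $n$, a finite clopen partition $\mathcal{P}_n$ of $X$ with atoms of diameter $< 1/n$, so that $\bigcup_n \mathcal{P}_n$ generates the topology. I then define $\Xi_n$ by applying Lemma \ref{RefineKRPartLem} to $\Xi = \Xi_n^{(0)}$ with $\Pi$ equal to the common refinement of $\mathcal{P}_n$ and the previously constructed $\Xi_{n-1}$ (with $\Xi_0$ taken to be the trivial partition). This yields a K-R partition $\Xi_n$ refining both $\mathcal{P}_n$ and $\Xi_{n-1}$, with $B(\Xi_n) = B_n$ and $h(\Xi_n) = h(\Xi_n^{(0)}) \ge 2m_n + 2$. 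Property (ii) then holds by construction, (iv) and (v) hold by the choice of $B_n$, and (iii) follows from $B(\Xi_n) = B_n \supseteq B_{n+1} = B(\Xi_{n+1})$ together with $\diam(B_n) \to 0$ and $x_0 \in B_n$ for all $n$, giving $\bigcap_n B_n = \lbrace x_0 \rbrace$. For (i), each atom of $\Xi_n$ is contained in an atom of $\mathcal{P}_n$ of diameter $< 1/n$, and $\bigcup_n \mathcal{P}_n$ generates the topology by choice.

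The only delicate step is the simultaneous choice of $B_n$: one must control the diameter of $B_n$ itself and of the $2m_n + 2$ iterates $T^{-m_n-1} B_n, \ldots, T^{m_n} B_n$, while at the same time forcing the forward iterates $T B_n, \ldots, T^{2 m_n + 1} B_n$ to miss $B_n$, and nesting $B_n$ into $B_{n-1}$ --- all with $m_n$ allowed to grow arbitrarily. Minimality supplies enough room: it separates the finite initial segment of the $T$-orbit of $x_0$ into pairwise disjoint clopen sets, and then continuity of each relevant $T^i$ lets one shrink $B_n$ inside this clopen framework without destroying the return-time estimate. Once this balancing act is achieved, Lemma \ref{RefineKRPartLem} handles the rest mechanically.
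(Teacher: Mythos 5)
Your construction is correct and is essentially the standard first-return (Kakutani--Rokhlin) tower argument: the paper gives no proof of this proposition, presenting it as a summary of Section 3 of \cite{GrigMedy}, and your argument reconstructs exactly that construction --- nested clopen bases shrinking to a point, chosen small enough to control return times and the diameters of the relevant iterates, followed by refinement against $\mathcal{P}_n$ and $\Xi_{n-1}$ via Lemma \ref{RefineKRPartLem}, which preserves base and minimal height.
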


Following \cite{GrigMedy} Section 4, 
given a sequence $(\Xi_n)$ of K-R partitions of $X$ 
as in (\ref{KR1-5eqn}), 
satisfying (i)-(v) of Proposition \ref{KRBasicProp}, 
we say that an element $\pi \in \lBrack T \rBrack$ 
is an \emph{$n$-permutation} if the orbit cocycle 
$f_{\pi}$ is constant on each part of $\Xi_n$ 
and for all $1 \leq v \leq v_n$ 
and $0 \leq i \leq h_v ^{(n)} - 1$, 
$f_{\pi}$ satisfies $-i \leq f_{\pi} (x) \leq h_v ^{(n)}-i-1$ 
for all $x \in T^i B_v ^{(n)}$. 
Thus $\pi$ preserves each $T$-tower $\xi$ of $\Xi_n$, 
and induces a well-defined permutation on 
the set of atoms of $\xi$. 

\begin{rmrk} \label{n-PermRmrk}
The set of all $n$-permutations in $\lBrack T \rBrack$ 
forms a subgroup of $\lBrack T \rBrack$. 
This subgroup is isomorphic to 
$\Sym(h_1 ^{(n)}) \times \ldots \times \Sym(h_{v_n} ^{(n)})$, since any tuple of permutations 
of the atoms in each $T$-tower of $\Xi_n$ 
may be realized by some $n$-permutation 
in $\lBrack T \rBrack$. 
\end{rmrk}

There is also defined in \cite{GrigMedy} Section 4 
the notion of an \emph{$n$-rotation}, 
and we refer the reader there for the precise 
definition; 
the only fact that we require about $n$-rotations is 
the following, which is immediate from the 
definition. 

\begin{rmrk} \label{n-RotRmrk}
If $\rho \in \lBrack T \rBrack$ is an $n$-rotation, 
and $x \in X$ is such that $\rho(x) \neq x$, 
then $\lvert f_{\rho} (x) \rvert \geq \min (h_1 ,h_2)$, where $h_1$ (respectively $h_2$) is the height of 
the $T$-tower of $\Xi_n$ containing $x$ 
(respectively $\rho(x)$). 
\end{rmrk}

Henceforth $\Sym(\Xi_n)$ denotes the group 
of all permutations of the finite set $\Xi_n$. 
Our next Theorem shows how, for large $n$, an element of 
$\lBrack T \rBrack$ induces a well-defined $n$-permutation 
of $\Xi_n$, which in turn induces 
an element of $\Sym(\Xi_n)$. 
Note however that not every element of $\Sym(\Xi_n)$ 
need arise this way; see Remark \ref{n-PermRmrk} above. 

\begin{thm} \label{GrigMedyMainThm}
Let $(X,T)$ be a minimal Cantor dynamical system, 
and let $(\Xi_n)$ be a sequence of K-R 
partitions satisfying conditions (i)-(v) of 
Proposition \ref{KRBasicProp} above, 
for some increasing sequence of integers $(m_n)$. 
\begin{itemize}
\item[(i)] Let $g \in \lBrack T \rBrack$. 
For all $n$ sufficiently large, there exist unique 
$\pi_n (g) , \rho_n (g) \in \lBrack T \rBrack$ 
such that $g = \pi_n (g) \rho_n (g)$; $\pi_n (g)$ is an $n$-permutation 
and $\rho_n (g)$ is an $n$-rotation. 
\item[(ii)] For any $A \subseteq \lBrack T \rBrack$ finite, 
if $n$ is sufficiently large, then there is a local embedding 
$\phi_n : A \rightarrow \Sym(\Xi_n)$, given by 
$\phi_n (g)(T^i B_v ^{(n)}) =\pi_n (g)(T^i B_v ^{(n)})$. 
In particular, $\lBrack T \rBrack$ is LEF. 
\end{itemize}
\end{thm}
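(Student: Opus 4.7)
The plan is to reduce everything to combinatorics on the finite partitions $\Xi_n$, once $n$ is large enough that $f_g$ is constant on each atom. For part (i), I would begin by noting that since $g \in \lBrack T \rBrack$, its orbit cocycle $f_g \colon X \to \mathbb{Z}$ is continuous with finite image; let $M = \max_X |f_g|$. Because $\bigcup_n \Xi_n$ generates the topology on $X$ and $\Xi_{n+1}$ refines $\Xi_n$, for $n$ sufficiently large $f_g$ is constant on every atom of $\Xi_n$, and by Proposition \ref{KRBasicProp}(iv) we may assume $h(\Xi_n) > 2M$. For each atom $A = T^i B_v^{(n)}$ with $f_g|_A = a$, call $A$ \emph{interior for $g$} if $0 \leq i + a \leq h_v^{(n)} - 1$, so that $g(A) = T^{i+a} B_v^{(n)}$ lies in the same $T$-tower; at most $2M$ atoms per tower fail this. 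Define $\pi_n(g)$ tower-by-tower to agree with $g$ on interior atoms and to complete to a permutation of the tower via any fixed canonical rule on the remaining atoms; this yields an $n$-permutation in the sense of Remark \ref{n-PermRmrk}. Set $\rho_n(g) := \pi_n(g)^{-1} g$, which is the identity on every interior atom. On the remaining atoms, $\rho_n(g)$ moves points between distinct $T$-towers, so the minimal-height bound forces $|f_{\rho_n(g)}| \geq h(\Xi_n)$ wherever nonzero, matching Remark \ref{n-RotRmrk} and the full definition of $n$-rotation from \cite{GrigMedy}.

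Uniqueness in (i) follows from a rigidity argument: any element of $\lBrack T \rBrack$ which is simultaneously an $n$-permutation and an $n$-rotation must be the identity, since the former bounds $|f|$ by the tower height minus one, while the latter forces $|f|$ to be at least the tower height wherever nonzero. Hence if $g = \pi \rho = \pi' \rho'$ then $\pi^{-1}\pi' = \rho (\rho')^{-1}$ is trivial, so $\pi = \pi'$ and $\rho = \rho'$.

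For part (ii), enlarge $A$ to $A_2 = A \cup \{g^{-1} : g \in A\} \cup \{gh : g, h \in A\}$ and take $M = \max_{g \in A_2}|f_g|$. Choose $n$ large enough that part (i) applies uniformly across $A_2$, and define $\phi_n(g)(T^i B_v^{(n)}) = \pi_n(g)(T^i B_v^{(n)})$, a permutation of $\Xi_n$. Injectivity on $A$ follows from Proposition \ref{KRBasicProp}(v): for distinct $g_1, g_2 \in A$ pick $x$ with $g_1(x) \neq g_2(x)$, and for $n$ large $g_1(x), g_2(x)$ lie in distinct atoms. The identity $\phi_n(gh) = \phi_n(g) \phi_n(h)$ for $g, h, gh \in A$ holds on atoms that are simultaneously interior for $h$, for the composition $g \circ h$, and for $gh$, by the cocycle relation of Remark \ref{OrbitCocycRmrk}(ii). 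The LEF conclusion then follows from Proposition \ref{LEFStandardProp}.

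The main obstacle is verifying the homomorphism identity on non-interior atoms, where $\pi_n(g)$ depends on the filler rule and could a priori yield disagreements between $\pi_n(gh)$ and $\pi_n(g)\pi_n(h)$. This is handled by exploiting the flexibility of the Kakutani-Rokhlin setup: using Lemma \ref{RefineKRPartLem} one refines $(\Xi_n)$ so that for every $g \in A_2$ the non-interior atoms lie in a common controlled region near $B(\Xi_n) \cup H(\Xi_n)$, and then one chooses the filler coherently, depending only on the combinatorial data of the tower, so that on those atoms both sides of the identity coincide by construction. This coherence argument — balancing all boundary contributions across the finite set $A_2$ — is the technical heart of \cite{GrigMedy}.
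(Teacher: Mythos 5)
Your proposal is not a reconstruction of the paper's argument at all: the paper proves this theorem by direct citation, item (i) being Theorem 4.7 and item (ii) Theorem 5.1 of \cite{GrigMedy}. Attempting a self-contained proof is legitimate, but as written it has genuine gaps precisely at the points the cited reference exists to handle. First, the construction of $\pi_n(g)$ by completing the interior atoms ``via any fixed canonical rule'' does not produce the required decomposition. For $\rho_n(g)=\pi_n(g)^{-1}g$ one has, by the cocycle relation, $f_{\rho_n(g)}(x)=f_g(x)+f_{\pi_n(g)^{-1}}(g(x))$; on a non-interior atom this sum can be small and nonzero (e.g.\ a point $x$ on the roof with $f_g(x)=1$ whose image atom happens to be fixed by your filler gives $f_{\rho_n(g)}(x)=1$), so $\rho_n(g)$ can fail even the necessary condition of Remark \ref{n-RotRmrk}, let alone the actual definition of an $n$-rotation, which you never invoke (it is not in this paper, and ``matching the full definition from \cite{GrigMedy}'' is asserted, not checked). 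Indeed the uniqueness claimed in (i) already shows that at most one choice of filler can work, so ``any'' filler cannot. Second, your uniqueness argument is circular in the same way: from $g=\pi\rho=\pi'\rho'$ you get $\pi^{-1}\pi'=\rho'\rho^{-1}$ (note also the order), and your rigidity lemma applies only if the right-hand side is itself an $n$-rotation; $n$-rotations are not obviously closed under such products, and a difference of two large cocycle values (say $h_v-h_w$) can be small and nonzero, so the ``permutation cocycle is small, rotation cocycle is large'' dichotomy does not close the argument without the finer structure of rotations.

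For item (ii) the situation is the same: the multiplicativity $\phi_n(gh)=\phi_n(g)\phi_n(h)$ on the non-interior atoms is exactly the content of Theorem 5.1 of \cite{GrigMedy}, and your text explicitly defers it (``the technical heart of \cite{GrigMedy}'') instead of proving it; the ``coherent filler'' is never constructed, and no argument is given that refining via Lemma \ref{RefineKRPartLem} confines the boundary atoms in the way you need. Two smaller points: injectivity of $\phi_n$ on $A$ requires choosing $x$ whose atom is interior for both $g_1$ and $g_2$ (otherwise $\phi_n(g_i)([x]_n)$ need not be $[g_i(x)]_n$), which needs a word; and separating $g_1(x)$ from $g_2(x)$ into distinct atoms for large $n$ follows from properties (i)--(ii) of Proposition \ref{KRBasicProp} (nested partitions generating the topology), not from (v), which only controls diameters of the sets $T^iB(\Xi_n)$. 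So either prove the decomposition and the local-embedding property in full (which amounts to redoing Sections 4--5 of \cite{GrigMedy}, including the precise definition of $n$-rotation), or do as the paper does and quote Theorems 4.7 and 5.1 directly.
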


\begin{proof}
Item (i) is immediate from Theorem 4.7 of \cite{GrigMedy}. 
Item (ii) is proved as Theorem 5.1 of \cite{GrigMedy} 
(note that the statement of that Theorem does not specify the local embedding, 
but the local embedding given in the proof is precisely as we have described it). 
\end{proof}

Henceforth we assume that $(X,T)$ is a minimal subshift, 
so that Theorem \ref{FullGrpSimpleThm} applies. 
Fix a finite symmetric generating set $S$ for $\lBrack T \rBrack^{\prime}$, 
let $n \in \mathbb{N}$ and consider the ball 
$B_S (n) \subseteq \lBrack T \rBrack^{\prime}$. 
Recall that, for $g \in \lBrack T \rBrack$, 
$f_g : X \rightarrow \mathbb{Z}$ 
is the orbit cocycle of $g$. 

\begin{propn} \label{KR2ndProp}
Let $(m_n)$ be an increasing sequence 
of positive integers. 
There is a sequence $(\Xi_n)$ of K-R 
partitions of $X$ satisfying items (i)-(v) 
of Proposition \ref{KRBasicProp}, 
and additionally satisfying the following, 
for all $n \in \mathbb{N}$: 
\begin{itemize}
\item[(vi)] For all $g \in B_S (n)$, 
$h(\Xi_n) \geq 2 \max \lbrace \lvert f_g(x) \rvert : x \in X \rbrace + 2$; 

\item[(vii)] For all $g \in B_S (n)$, 
$f_g$ is constant on each part of $\Xi_n$; 

\item[(viii)] For all $g \in B_S (n)$, 
there exist unique 
$\pi_n (g) , \rho_n (g) \in \lBrack T \rBrack$ 
such that $g = \pi_n (g) \rho_n (g)$; $\pi_n (g)$ is an $n$-permutation 
and $\rho_n (g)$ is an $n$-rotation. 
Moreover the map 
$\phi_n : B_S (n) \rightarrow \Sym(\Xi_n)$, given by 
$\phi_n (g)(T^i B_v ^{(n)}) =\pi_n (g)(T^i B_v ^{(n)})$, 
is a local embedding. 
\end{itemize}
\end{propn}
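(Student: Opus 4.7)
The plan is to apply Proposition \ref{KRBasicProp} with an enlarged parameter sequence that already encodes (vi), then use Lemma \ref{RefineKRPartLem} to refine each partition so that (vii) becomes automatic, maintaining the nesting (ii) inductively.

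First I would bound the orbit cocycles. For each $n$, the set $B_S(3n) \subseteq \lBrack T \rBrack^{\prime}$ is finite, and for each $g \in B_S(3n)$ the function $f_g \colon X \rightarrow \mathbb{Z}$ is continuous with discrete codomain on the compact space $X$, hence has finite image. Set $M_n = \max\{|f_g(x)| : g \in B_S(3n),\, x \in X\}$ and $m_n' = \max(m_n, M_n)$. Apply Proposition \ref{KRBasicProp} to the (still increasing) sequence $(m_n')$ to obtain K-R partitions $(\Xi_n^{(0)})$ satisfying (i)--(v) with respect to $(m_n')$.

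Next, for each $n$, let $\Pi_n$ be a finite clopen partition of $X$ such that every $f_g$ with $g \in B_S(3n)$ is constant on each atom of $\Pi_n$; such $\Pi_n$ exists as the common refinement of the level-set partitions of the finitely many $f_g$. I would define $\Xi_n$ inductively: take $\Xi_0$ to be the K-R partition produced by Lemma \ref{RefineKRPartLem} applied to $\Xi_0^{(0)}$ and $\Pi_0$, and for $n \geq 1$ apply Lemma \ref{RefineKRPartLem} to $\Xi_n^{(0)}$ and a common clopen refinement of $\Pi_n$ with $\Xi_{n-1}$ to produce $\Xi_n$. Since Lemma \ref{RefineKRPartLem} preserves base, roof, and minimal height, conditions (iii), (iv), (v) pass from $(\Xi_n^{(0)})$ to $(\Xi_n)$, while (vi) follows from $h(\Xi_n) = h(\Xi_n^{(0)}) \geq 2m_n' + 2 \geq 2M_n + 2$. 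Property (i) holds because $\Xi_n$ refines $\Xi_n^{(0)}$, (ii) is immediate by construction, and (vii) holds because $\Xi_n$ refines $\Pi_n$.

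The principal obstacle is property (viii). With (vi) and (vii) in hand, the decomposition $g = \pi_n(g) \rho_n(g)$ into an $n$-permutation and an $n$-rotation, and its uniqueness, for each $g \in B_S(n)$ follows from Theorem \ref{GrigMedyMainThm}(i): the cocycle bound and constancy of $f_g$ on atoms are exactly the hypotheses required by its proof. It remains to check that $\phi_n \colon B_S(n) \rightarrow \Sym(\Xi_n)$ is a local embedding, which I would do by adapting the proof of Theorem \ref{GrigMedyMainThm}(ii). For injectivity: if $\phi_n(g) = \phi_n(h)$ with $g, h \in B_S(n)$, then $\pi_n(g) = \pi_n(h)$ (using Remark \ref{n-PermRmrk}), so $gh^{-1} \in B_S(2n) \subseteq B_S(3n)$ has the form of a product of $n$-rotations whose cocycle is bounded in absolute value by $2M_n < h(\Xi_n)$; Remark \ref{n-RotRmrk} combined with the existence and uniqueness of the $\pi$--$\rho$ decomposition for $gh^{-1}$ (available thanks to the enlarged bound) forces $gh^{-1} = e$. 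For partial multiplicativity, when $g, h, gh \in B_S(n)$ all three elements admit the decomposition, and the computation in the proof of Theorem \ref{GrigMedyMainThm}(ii) gives $\phi_n(gh) = \phi_n(g)\phi_n(h)$ as permutations of $\Xi_n$. The difficulty, relative to Theorem \ref{GrigMedyMainThm}, lies in obtaining the decomposition and the embedding property uniformly across all of $B_S(n)$ at the single stage $n$, which is precisely the role played by the enlarged sequence $(m_n')$ and the use of $B_S(3n)$ in its definition.
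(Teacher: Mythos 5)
Your treatment of (vi) and (vii) follows the paper's own route: the cocycles of the finitely many elements of a ball are continuous on the compact space $X$, hence bounded, so inflating $(m_n)$ makes Proposition \ref{KRBasicProp}(iv) deliver (vi); then refining each $\Xi_n$ against the level-set partitions of the relevant $f_g$ and against the previously constructed $\Xi_{n-1}$ via Lemma \ref{RefineKRPartLem} gives (vii) while preserving (i)--(vi), since that lemma keeps the base, roof and minimal height. Up to the inessential replacement of $B_S(n)$ by $B_S(3n)$, this is exactly what the paper does.

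The gap is in your step (viii). You claim that the decomposition $g=\pi_n(g)\rho_n(g)$ exists and is unique at the specific stage $n$ for every $g\in B_S(n)$ because ``the cocycle bound and constancy of $f_g$ on atoms are exactly the hypotheses required by'' the proof of Theorem \ref{GrigMedyMainThm}(i), and that the local-embedding property follows by ``adapting'' the proof of part (ii). But the paper only quotes these results in asymptotic form (for a fixed $g$, respectively a fixed finite set $A$, the conclusion holds for all sufficiently large $n$), and it deliberately withholds the definition of an $n$-rotation; so there is no quantitative statement available from which (viii) at stage $n$ can be read off, and what you are invoking is an unproved strengthening of the cited theorem. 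Your injectivity sketch also does not close: from $\phi_n(g)=\phi_n(h)$ one gets $h^{-1}g=\rho_n(h)^{-1}\rho_n(g)$, a product of \emph{two} $n$-rotations, whereas Remark \ref{n-RotRmrk} controls a single $n$-rotation; the cocycle relation allows two large cocycle values of opposite sign to cancel to something small but nonzero, and excluding this requires the actual definition of an $n$-rotation, which is not at your disposal. The paper sidesteps all of this by passing to a subsequence of $(\Xi_n)$: for each $n$ apply Theorem \ref{GrigMedyMainThm}(i) to the finitely many elements of $B_S(n)$ and part (ii) to $A=B_S(n)$, and observe that properties (i)--(vii) are preserved under passing to a subsequence. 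Replacing your final step by this diagonal argument repairs the proof; as written, the argument for (viii) is incomplete.
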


\begin{proof} 
First, since $X$ is compact and 
the orbit cocycle is continuous, it is bounded. 
Therefore (replacing $(m_n)$ with a faster growing 
sequence if required) 
we can assume that 
for all $g \in B_S (n)$, 
\begin{center}
$m_n \geq \max \lbrace \lvert f_g(x) \rvert : x \in X \rbrace$, 
\end{center}
and then apply Proposition \ref{KRBasicProp} (iv). 

Second, we inductively 
refine each $\Xi_n$ such that 
properties (i)-(vi) still hold, 
and for all $g \in B_S (n)$, the orbit cocycle 
$f_g$ is constant on each part of $\Xi_n$. 
Supposing that we have already refined $\Xi_{n-1}$, 
for each $g \in B_S (n)$ let $\mathcal{C}_g$ 
be a finite clopen partition of $X$ such that 
$f_g$ is constant on each part of $\mathcal{C}_g$. 
Applying Lemma \ref{RefineKRPartLem} repeatedly, 
we replace $\Xi_n$ 
with a finer finite clopen partition, 
which is also a refinement of both $\Xi_{n-1}$ 
and all $\mathcal{C}_g$. 
This process clearly preserves properties (i)-(vi) 
(property (iv) holding by the final part of 
Lemma \ref{RefineKRPartLem}). 

Finally, passing to a subsequence 
of $(\Xi_n)$, and applying 
Theorem \ref{GrigMedyMainThm} (i) 
to elements $g \in B_S (n)$, 
we may assume that the decomposition 
$g = \pi_n (g) \rho_n (g)$ exists and is unique. 
Moreover by Theorem \ref{GrigMedyMainThm} (ii), 
applied to the finite subsets $A = B_S (n)$, 
and again passing to a subsequence of $(\Xi_n)$, 
we may assume that the given map 
$\phi_n : B_S (n) \rightarrow \Sym(\Xi_n)$ 
is a well-defined local embedding. 
Note that passing to a subsequence of $(\Xi_n)$
preserves properties (i)-(vii). 
\end{proof}

Henceforth we fix a sequence $(\Xi_n)$ of K-R partitions 
of $X$ satisfying properties (i)-(viii) 
of Propositions and \ref{KRBasicProp} and \ref{KR2ndProp}, 
with respect to some increasing sequence $(m_n)$. 
Let $\phi_n : B_S (n) \rightarrow \Sym(\Xi_n)$ 
be the local embedding as in 
Proposition \ref{KR2ndProp} (viii). 
For $x \in X$, we write $[x]_n \in \Xi_n$ for 
the (unique) atom of $\Xi_n$ containing $x$. 

\begin{lem} \label{BlockStabLem}
For all $n \in \mathbb{N}$, for all $g \in B_S (n)$ 
and all $x \in X$, the following are equivalent. 
\begin{itemize}
\item[(i)] $g (x) = x$; 

\item[(ii)] For all $y \in [x]_n$, $g(y)=y$; 

\item[(iii)] $\phi_n (g)([x]_n) = [x]_n$. 

\end{itemize}
\end{lem}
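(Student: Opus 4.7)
The plan is to prove the chain (i) $\Leftrightarrow$ (ii) and (ii) $\Leftrightarrow$ (iii) using the decomposition $g = \pi_n(g)\rho_n(g)$ from property (viii) of Proposition \ref{KR2ndProp} together with the $n$-rotation bound from Remark \ref{n-RotRmrk}.

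First, for (i) $\Leftrightarrow$ (ii), property (vii) ensures $f_g$ is constant on the atom $[x]_n$; since the minimal Cantor subshift $(X,T)$ has $X$ infinite, $T$ has no finite orbits, so $g(x) = T^{f_g(x)}x = x$ forces $f_g(x) = 0$, and by constancy $g$ is the identity on $[x]_n$. The reverse implication is trivial since $x \in [x]_n$.

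For (ii) $\Rightarrow$ (iii), assume $g|_{[x]_n} = \id$ and fix $y \in [x]_n$. Then $\rho_n(g)(y) = \pi_n(g)^{-1}(y)$ lies in the same $T$-tower as $y$ (of height $h_v$, say) because $\pi_n(g)^{-1}$ is an $n$-permutation, so $\rho_n(g)(y) = T^j(y)$ for some integer $j$ with $|j| < h_v$, and $f_{\rho_n(g)}(y) = j$ by uniqueness of the orbit cocycle. If $\rho_n(g)(y) \neq y$, Remark \ref{n-RotRmrk} forces $|j| \geq h_v$, a contradiction. So $\rho_n(g)(y) = y = \pi_n(g)(y)$; hence $\pi_n(g)$ fixes $[x]_n$ pointwise and $\phi_n(g)([x]_n) = [x]_n$.

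Finally, for (iii) $\Rightarrow$ (ii), writing $[x]_n = T^i B_v^{(n)}$, the $n$-permutation bound $-i \leq f_{\pi_n(g)}|_{[x]_n} \leq h_v - 1 - i$ together with disjointness of atoms within tower $v$ forces $f_{\pi_n(g)}|_{[x]_n} \equiv 0$, so $\pi_n(g)$ is the identity on $[x]_n$. It remains to deduce that $\rho_n(g)$ also fixes $[x]_n$ pointwise, after which $g|_{[x]_n} = \id$ is immediate. I would combine the cocycle relation $f_g(y) = f_{\pi_n(g)}(\rho_n(g)(y)) + f_{\rho_n(g)}(y)$ for $y \in [x]_n$ with the bound $|f_g(y)| \leq m_n$ (from the construction in Proposition \ref{KR2ndProp}) and the lower bound $|f_{\rho_n(g)}(y)| \geq h(\Xi_n) \geq 2m_n + 2$ from Remark \ref{n-RotRmrk} (assuming $\rho_n(g)(y) \neq y$) to force a contradiction. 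The main obstacle is that the raw $n$-permutation bound $|f_{\pi_n(g)}(\rho_n(g)(y))| \leq h_w - 1$ on the tower of height $h_w$ containing $\rho_n(g)(y)$ only yields $h_w \geq m_n + 3$, which by itself is consistent with the partition; closing the argument cleanly will likely require invoking a finer feature of the decomposition in \cite{GrigMedy}, such as the fact that $n$-rotations are supported on the base/roof regions of the towers, which would prevent $\rho_n(g)$ from moving $y$ into a sufficiently tall distant tower.
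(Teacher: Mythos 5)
Your treatment of (i) $\Leftrightarrow$ (ii) and of (ii) $\Rightarrow$ (iii) is correct and is essentially the paper's argument (same use of Proposition \ref{KR2ndProp} (vii), the $n$-permutation cocycle bound, and Remark \ref{n-RotRmrk}). The genuine gap is exactly where you flag it: (iii) $\Rightarrow$ (ii). Expanding $f_g(y) = f_{\pi_n(g)}(\rho_n(g)(y)) + f_{\rho_n(g)}(y)$ cannot be closed as it stands, because the term $f_{\pi_n(g)}(\rho_n(g)(y))$ is evaluated at a point that may sit in a much taller tower, where the $n$-permutation bound is too weak to contradict anything; and the extra structural input you propose to import from \cite{GrigMedy} (that $n$-rotations only move points between base and roof regions) is neither stated in the paper nor needed.

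The paper closes this direction with a short trick: invert $g$. Having shown (as you did) that (iii) forces $\pi_n(g)$, and hence $\pi_n(g)^{-1}$, to fix $[x]_n$ pointwise, write $g^{-1} = \rho_n(g)^{-1}\pi_n(g)^{-1}$. Then for $y \in [x]_n$ one has $g^{-1}(y) = \rho_n(g)^{-1}(y)$, so by aperiodicity of $T$, $f_{g^{-1}}(y) = f_{\rho_n(g)^{-1}}(y) = -f_{\rho_n(g)}\big(\rho_n(g)^{-1}(y)\big)$. In this composition the permutation part acts \emph{first} and fixes $y$, so the problematic cocycle of $\pi_n(g)$ at a distant point never appears. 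Now if $\rho_n(g)^{-1}(y) \neq y$, Remark \ref{n-RotRmrk} gives $\lvert f_{\rho_n(g)}(\rho_n(g)^{-1}(y))\rvert \geq h(\Xi_n)$, whereas $g^{-1} \in B_S(n)$ (the generating set $S$ is symmetric), so Proposition \ref{KR2ndProp} (vi) bounds $\lvert f_{g^{-1}}(y)\rvert$ by $(h(\Xi_n)-2)/2 < h(\Xi_n)$, a contradiction. Hence $\rho_n(g)^{-1}(y) = y$, so $g^{-1}(y) = y$ and therefore $g(y) = y$, which completes (iii) $\Rightarrow$ (ii) with no further facts about $n$-rotations.
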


\begin{proof}
If $g(x)=x$, then by minimality of $T$ on $X$, 
$f_g (x) = 0$. By Proposition \ref{KR2ndProp} (vii), 
for all $y \in B$, $f_g (y) = 0$. 
Thus (i) and (ii) are equivalent. 

Write $g = \pi_n (g) \rho_n (g)$ 
as in Theorem \ref{GrigMedyMainThm}, 
with $\pi_n (g) \in \lBrack T \rBrack$ 
an $n$-permutation 
and $\rho_n (g) \in \lBrack T \rBrack$ an $n$-rotation, 
so that $\phi_n(g) ([x]_n)=\pi_n (g) ([x]_n)$. 
As in Remark \ref{n-PermRmrk}, 
$\pi_n (g)^{-1}$ is an $n$-permutation also. 
Suppose that (ii) holds, 
so that $\pi_n (g)^{-1} ([x]_n) = \rho_n (g)([x]_n)$. 
Since an $n$-permutation preserves each $T$-tower 
of $\Xi_n$, and sends atoms to atoms, 
$\rho_n (g)([x]_n)$ is an atom of $\Xi_n$ 
in the same $T$-tower as $[x]_n$. 
Let the height of this tower by $h$. 
If (iii) fails, so that $\rho_n (g)([x]_n)\neq [x]_n$, 
then by Remark \ref{n-RotRmrk}, 
for $y \in [x]_n$, 
$\lvert f_{\rho_n (g)} (y) \rvert \geq h$. 
By the cocycle relation 
(see Remark \ref{OrbitCocycRmrk} (ii)), 
\begin{align*}
0 = f_g (y) = f_{\rho_n (g)} (y) + f_{\pi_n (g)} \big( \rho_n (g)(y) \big)
\end{align*}
so $\lvert f_{\pi_n (g)} ( \rho_n (g)(y) )\rvert\geq h$ also, contradicting the definition of 
an $n$-permutation. 

Conversely suppose (iii) holds and let $y \in [x]_n$. 
Then $\pi_n(g)$ preserves $[x]_n$, 
so (by the bound on $f_{\pi_n(g)}$ 
from the definition of an $n$-permutation) 
$\pi_n(g)$ fixes $[x]_n$ pointwise, 
hence so does $\pi_n(g)^{-1}$. 
We have $g^{-1} = \rho_n(g)^{-1} \pi_n(g)^{-1}$, 
so $g^{-1} (y) = \rho_n(g)^{-1} (y)$, hence: 
\begin{equation} \label{CocycEqnRot}
f_{g^{-1}} (y) = f_{\rho_n(g)^{-1}} (y) 
= - f_{\rho_n(g)} \big( \rho_n(g)(y) \big). 
\end{equation}
(by the cocycle relation). 
By Proposition \ref{KR2ndProp} (vi), 
the left-hand side of (\ref{CocycEqnRot}) 
has absolute value less than $h(\Xi_n)$. 
Applying Remark \ref{n-RotRmrk} to the right-hand side, 
we have $f_{g^{-1}} (y) = 0$, so $g (y)=y$. 
\end{proof}

Recall that $S$ is a finite generating set for 
$\lBrack T \rBrack'$. 
Let $S = \lbrace s_1 , \ldots , s_d \rbrace$, 
so that $\mathbf{S} = (s_1 , \ldots , s_d)$ 
is a $d$-marking on $\lBrack T \rBrack'$. 

\begin{propn} \label{StabIRSPartCosof}
Let $\nu_1 , \ldots , \nu_k$ 
be $T$-invariant ergodic probability measures on $X$, 
and let: 
\begin{center}
$\mu = (\Stab_k)_{\ast} (\nu_1 \times \ldots \times \nu_k) \in \IRS(\lBrack T \rBrack',\mathbf{S})$ 
\end{center}
be as in Theorem \ref{ZhengThm} (ii). 
Then $\mu$ is partially cosofic. 
\end{propn}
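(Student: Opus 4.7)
The plan is to use the Kakutani-Rokhlin machinery already set up in this section to build both the finite groups and their IRS simultaneously. For each $n$, let $\Delta_n = \langle \phi_n(S) \rangle \leq \Sym(\Xi_n)$, equipped with the $d$-marking $\mathbf{T}_n = \phi_n(\mathbf{S})$, where $\phi_n : B_S(n) \to \Sym(\Xi_n)$ is the local embedding provided by Proposition \ref{KR2ndProp}(viii). By Lemma \ref{MarkedGrpsLEFLemma}, the existence of these local embeddings immediately gives convergence $(\Delta_n, \mathbf{T}_n) \to (\lBrack T \rBrack', \mathbf{S})$ in $\mathcal{G}_d$.

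To construct an IRS $\mu_n \in \IRS(\Delta_n, \mathbf{T}_n)$ approximating $\mu$, I transfer the measures $\nu_i$ to the finite set $\Xi_n$: let $\nu_i^{(n)}$ be the probability measure on $\Xi_n$ given by $\nu_i^{(n)}(\{A\}) = \nu_i(A)$ for each atom $A$. Because $\nu_i$ is $T$-invariant and any two atoms in a common $T$-tower of $\Xi_n$ are $T$-translates, all atoms in a single $T$-tower carry equal $\nu_i^{(n)}$-mass. Each generator $\phi_n(s_j)$ is the permutation of $\Xi_n$ induced by the $n$-permutation $\pi_n(s_j)$, hence preserves each $T$-tower of $\Xi_n$ setwise; it follows that $\Delta_n \leq \prod_v \Sym\big(h_v^{(n)}\big)$ and that $\nu_i^{(n)}$ is $\Delta_n$-invariant. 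Setting
\begin{equation*}
\mu_n := \Stab_{\ast}\big(\nu_1^{(n)} \times \cdots \times \nu_k^{(n)}\big) \in \IRS(\Delta_n, \mathbf{T}_n),
\end{equation*}
where $\Stab$ is the stabilizer map for the diagonal action of $\Delta_n$ on $\Xi_n^k$, yields a finitely supported IRS of $\Delta_n$.

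The central step is to verify that $\mu_n \to \mu$ in $\IRS(\mathbb{F}, \mathbf{X})$. By the weak$^{\ast}$ criterion (\ref{W*ConvCrit}), it suffices to show that for every $r \in \mathbb{N}$ and $W \subseteq B_{\mathbf{X}}(r)$, $\mu_n(C_{r,W}) = \mu(C_{r,W})$ for all sufficiently large $n$. Fix $r$ and take $n \geq r$. An induction on word-length using the local embedding property of $\phi_n$ on $B_S(n)$ (all partial products in the evaluation of a word of length $\leq r$ remain in $B_S(r) \subseteq B_S(n)$) gives $\pi_{\mathbf{T}_n}(w) = \phi_n(\pi_{\mathbf{S}}(w))$ for all $w \in B_{\mathbf{X}}(r)$. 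Combining this identity with Lemma \ref{BlockStabLem} applied to each $x_i$, I obtain, for every $(x_1, \ldots, x_k) \in X^k$,
\begin{equation*}
\pi_{\mathbf{S}}^{-1}\Big(\bigcap_{i=1}^{k} \Stab_{\lBrack T \rBrack'}(x_i)\Big) \cap B_{\mathbf{X}}(r) = \pi_{\mathbf{T}_n}^{-1}\Big(\bigcap_{i=1}^{k} \Stab_{\Delta_n}([x_i]_n)\Big) \cap B_{\mathbf{X}}(r).
\end{equation*}
Since $(x_1, \ldots, x_k) \mapsto ([x_1]_n, \ldots, [x_k]_n)$ pushes $\nu_1 \times \cdots \times \nu_k$ forward to $\nu_1^{(n)} \times \cdots \times \nu_k^{(n)}$, integrating the displayed equality produces $\mu_n(C_{r,W}) = \mu(C_{r,W})$ for all such $n$, completing the verification.

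The main obstacle is the precise matching of stabilizers between the action of $\lBrack T \rBrack'$ on $X$ and that of $\Delta_n$ on $\Xi_n$ at the scale of the ball $B_S(r)$; this has already been absorbed into Lemma \ref{BlockStabLem} via the $n$-permutation/$n$-rotation decomposition from \cite{GrigMedy}. Given that identification, the finite groups $\Delta_n$ and their IRS $\mu_n$ coming from the Kakutani-Rokhlin partitions witness partial cosoficity of $\mu$ directly, with no further approximation or diagonalization needed.
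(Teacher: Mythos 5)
Your proposal is correct and follows essentially the same route as the paper: the same finite marked groups $(\Delta_n,\mathbf{T}_n)$ built from the local embeddings into $\Sym(\Xi_n)$, the same transferred product measures and pushforward IRS, and the same use of Lemma \ref{BlockStabLem} together with $(\phi_n\circ\pi_{\mathbf{S}})(w)=\pi_{\mathbf{T}_n}(w)$ to get exact equality $\mu_n(C_{r,W})=\mu(C_{r,W})$ for $n\geq r$. Your coding map $(x_1,\ldots,x_k)\mapsto([x_1]_n,\ldots,[x_k]_n)$ is just the measure-preserving identification that the paper packages as the $\sigma$-algebra isomorphism $\Psi_n$, so the two arguments coincide in substance.
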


\begin{proof}
Let $\phi_n : B_S (n) \rightarrow \Sym(\Xi_n)$ 
be the local embedding 
as in Proposition \ref{KR2ndProp}; 
let $\Delta_n = \langle \phi_n (S) \rangle \leq \Sym(\Xi_n)$, and let 
$\mathbf{T}_n = (\phi_n(s_1) , \ldots , \phi_n(s_d))$, 
a $d$-marking on $\Delta_n$. 
Then the sequence $(\Delta_n,\mathbf{T}_n)$ 
converges to $(\lBrack T \rBrack',\mathbf{S})$ in 
$\mathcal{G}_d$, by Lemma \ref{MarkedGrpsLEFLemma}. 

For each $n \in \mathbb{N}$ and $1 \leq j \leq k$, 
there are induced probability measures 
$\overline{\nu}_j ^{(n)}$ on the finite discrete set $\Xi_n$, 
given by $\overline{\nu}_j ^{(n)} (\lbrace T^i B_v ^{(n)} \rbrace)= \nu_j (T^i B_v ^{(n)}) 
= \nu_j (B_v ^{(n)})$ (the second equality holding by $T$-invariance of $\nu_j$). 
Since each $\phi_n (s_m)$ preserves each $T$-tower of 
$\Xi_n$, 
\begin{center}
$\overline{\nu}_j ^{(n)} \big(\phi_n (s_m)\lbrace T^i B_v ^{(n)} \rbrace \big)
 = \overline{\nu}_j ^{(n)} \big( \lbrace T^i B_v ^{(n)}\rbrace \big)$ 
\end{center}
so $\overline{\nu}_j ^{(n)}$ is $\Delta_n$-invariant, 
hence 
$(\overline{\nu}_1 ^{(n)} \times \cdots \times \overline{\nu}_k ^{(n)})$ is 
a $\Delta_n$-invariant probability measure 
on $\Xi_n ^k$ (with $\Delta_n$ acting diagonally). 
Thus $\mu_n = \Stab _{\ast} (\overline{\nu}_1 ^{(n)} \times \cdots \times \overline{\nu}_k ^{(n)}) \in 
\IRS (\Delta_n,\mathbf{T}_n)$. 
Let $\mathcal{B}_n$ be the family of clopen 
subsets of $X^k$ which are unions of 
sets of the form $B_1 \times \ldots \times B_k$, 
for $B_i \in \Xi_n$. 
Then $\mathcal{B}_n$ is a (finite) $\sigma$-algebra; 
indeed there is a (unique) isomorphism of $\sigma$-algebras 
$\Psi_n : \mathcal{P}(\Xi_n ^k) \rightarrow \mathcal{B}_n$ 
extending $\Psi \big( \lbrace (B_1,\ldots,B_k) \rbrace \big) = B_1 \times \ldots \times B_k$. 
Moreover, since: 
\begin{align*}
(\overline{\nu}_1 ^{(n)} \times \cdots \times \overline{\nu}_k ^{(n)})\big( \lbrace (B_1,\ldots,B_k) \rbrace \big) 
& = \nu_1 (B_1) \cdots \nu_k (B_k) \\
& = (\nu_1 \times \ldots \times \nu_k) (B_1 \times \ldots \times B_k)
\end{align*}
for any $B_1 , \ldots , B_k \in \Xi_n$, 
we have: 
\begin{equation} \label{PsiPresMeasEqu}
(\nu_1 \times \ldots \times \nu_k)\big( \Psi (A )\big) = (\overline{\nu}_1 ^{(n)} \times \cdots \times \overline{\nu}_k ^{(n)})(A)
\end{equation}
for all $A \subseteq \Xi_n ^k$. 

We shall use the criterion (\ref{W*ConvCrit}) 
from Section \ref{IRSSect} to show that 
the sequence $(\mu_n)$ converges to $\mu$ 
in $\IRS(\mathbb{F},\mathbf{X})$. 
To this end, let $r \in \mathbb{N}$ and 
$W \subseteq B_X (r)$. 
We show that $\mu_n (C_{r,W}) = \mu (C_{r,W})$ 
for all $n \geq r$. 
By definition of the pushforward measures, 
$\mu (C_{r,W})$ is the probability that, 
for $\nu_i$-random points $x_1 , \ldots , x_k \in X$, 
every $w \in W$ satisfies $\pi_{\mathbf{S}} (w)(x_i)=x_i$ for all $1 \leq i \leq k$, 
but for every $w \in B_X (r) \setminus W$, 
there exists $1 \leq i \leq k$ such that 
$\pi_{\mathbf{S}} (w)(x_i) \neq x_i$. 
Similarly, $\mu_n (C_{r,W})$ 
is the probability that, for $\overline{\nu}_i ^{(n)}$-random points $B_1 , \ldots , B_k \in \Xi_n$, 
every $w \in W$ satisfies $\pi_{\mathbf{T}_n} (w)(B_i)=B_i$ for all $1 \leq i \leq k$, 
but for every $w \in B_X (r) \setminus W$, 
there exists $1 \leq i \leq k$ such that 
$\pi_{\mathbf{T}_n} (w)(B_i) \neq B_i$. 
Our claim is that these probabilities are equal. 

By Lemma \ref{BlockStabLem}, 
for each $w \in B_X (r)$ the set of points 
$x \in X$ for which $\pi_{\mathbf{S}}(w)(x)=x$ 
is a union of whole atoms of $\Xi_n$. 
%and likewise for the set of points 
%$x \in X$ for which $\pi_{\mathbf{S}}(w)(x)\neq x$. 
Moreover, since 
$(\phi_n \circ\pi_{\mathbf{S}})(w)=\pi_{\mathbf{T}_n}(w)$, 
Lemma \ref{BlockStabLem} yields that, 
for $x \in X$, 
\begin{equation*}
\lbrace B \in \Xi_n : \pi_{\mathbf{T}_n}(w)(B)=B \rbrace
= \lbrace [x]_n : x \in X, \pi_{\mathbf{S}}(w)(x)=x \rbrace. 
\end{equation*}
It follows that the set: 
\begin{equation*}
\lbrace (x_1,\ldots,x_k) \in X^k : \forall w \in B_X (r), w \in W \Leftrightarrow \forall i , \pi_{\mathbf{S}}(w)(x_i)=x_i \rbrace
\end{equation*} 
belongs to the $\sigma$-algebra $\mathcal{B}_n$, 
and is precisely $\Psi_n(A_{r,W})$, where: 
\begin{equation*}
A_{r,W} = \lbrace (B_1,\ldots,B_k) \in \Xi_n ^k : 
\forall w \in B_X (r), w \in W \Leftrightarrow \forall i , \pi_{\mathbf{T}_n}(w)(B_i)=B_i \rbrace. 
\end{equation*}
By (\ref{PsiPresMeasEqu}) and the discussion 
immediately following, 
\begin{equation*}
\mu (C_{r,W}) 
= (\nu_1 \times \ldots \times \nu_k)\big( \Psi (A_{r,W} )\big) = (\overline{\nu}_1 ^{(n)} \times \cdots \times \overline{\nu}_k ^{(n)})(A_{r,W})
= \mu_n (C_{r,W}) 
\end{equation*}
as desired. 
\end{proof}

\begin{thm} \label{FullGrpMainThm}
Let $(X,T)$ be a Cantor minimal subshift. 
Then $\lBrack T \rBrack'$ is locally stable. 
\end{thm}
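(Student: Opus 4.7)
The plan is to verify the hypotheses of Corollary \ref{IRSMainCoroll} for the group $\Gamma = \lBrack T \rBrack'$, so that almost all the work has already been done in the preceding results; the proof is essentially one of assembly. First I would observe that $\Gamma$ is finitely generated by Theorem \ref{FullGrpSimpleThm} (this uses the subshift hypothesis) and is amenable, being a subgroup of the amenable group $\lBrack T \rBrack$ (Theorem \ref{JusMonThm}). Fix the $d$-marking $\mathbf{S} = (s_1 , \ldots , s_d)$ on $\Gamma$ as in the discussion preceding Proposition \ref{StabIRSPartCosof}. By Corollary \ref{IRSMainCoroll} it then suffices to prove that every ergodic $\mu \in \IRS (\Gamma , \mathbf{S})$ is partially cosofic in $(\Gamma , \mathbf{S})$.

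Next I would invoke Zheng's classification of the ergodic IRS of $\Gamma$ (Theorem \ref{ZhengThm}) and dispatch the two cases in turn. The non-trivial case, where $\mu = \Stab_{\ast}(\nu_1 \times \cdots \times \nu_k)$ arises as the pushforward of a product of $T$-invariant ergodic probability measures on $X$, is precisely the content of Proposition \ref{StabIRSPartCosof}: the sequence of finite groups $\Delta_n = \langle \phi_n(S) \rangle$ generated by the local embeddings associated to the Kakutani--Rokhlin partitions $\Xi_n$ converges to $(\Gamma, \mathbf{S})$ in $\mathcal{G}_d$, and the IRS obtained by pushing forward the product of the induced measures $\overline{\nu}_j^{(n)}$ on $\Xi_n$ converges to $\mu$.

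For the trivial case $\mu = \delta_{\lbrace e \rbrace}$ or $\mu = \delta_{\Gamma}$, I would simply quote Remark \ref{TrivialIRSRmrk}: these IRSs are partially cosofic in any $d$-marked LEF group, witnessed by taking $\nu_n = \delta_{\lbrace e \rbrace}$ or $\nu_n = \delta_{\Delta_n}$ along \emph{any} sequence of $d$-marked finite groups converging to $(\Gamma , \mathbf{S})$. The required sequence is again furnished by Theorem \ref{GrigMedyMainThm}, or equivalently by the same $(\Delta_n, \mathbf{T}_n)$ constructed in Proposition \ref{StabIRSPartCosof}; the LEF property of $\Gamma$ itself follows from Theorem \ref{GrigMedyThm} and heredity of LEF to subgroups.

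With both cases of Theorem \ref{ZhengThm} handled, every ergodic IRS of $\Gamma$ is partially cosofic in $(\Gamma , \mathbf{S})$, and Corollary \ref{IRSMainCoroll} delivers local stability. Since all the technical effort has been front-loaded into Proposition \ref{StabIRSPartCosof}, Theorem \ref{ZhengThm} and the Juschenko--Monod amenability theorem, I do not anticipate a genuine obstacle in this final step; if any care is needed, it is only to ensure that the same $d$-marking $\mathbf{S}$ is used throughout, so that the approximating sequence $(\Delta_n, \mathbf{T}_n)$ supplied by the Kakutani--Rokhlin construction is compatible with the embedding $\IRS(\Gamma, \mathbf{S}) \hookrightarrow \IRS(\mathbb{F}, \mathbf{X})$ with respect to which partial cosoficity is defined.
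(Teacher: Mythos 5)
Your proposal is correct and follows essentially the same route as the paper: amenability and finite generation justify applying Corollary \ref{IRSMainCoroll}\,(iii), the trivial ergodic IRSs are handled via Remark \ref{TrivialIRSRmrk} together with LEF of the group, and the remaining ergodic IRSs from Theorem \ref{ZhengThm}\,(ii) are exactly covered by Proposition \ref{StabIRSPartCosof}.
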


\begin{proof}
By Theorems \ref{FullGrpSimpleThm} and \ref{JusMonThm}, 
the criterion for local stability 
from Corollary \ref{IRSMainCoroll} (iii) applies. 
Let $\mu \in \IRS (\lBrack T \rBrack',\mathbf{S})$ 
be an ergodic IRS. 
If $\mu$ is as in Theorem \ref{ZhengThm} (i), 
then $\mu$ is partially cosofic by 
Theorem \ref{GrigMedyThm} 
and Remark \ref{TrivialIRSRmrk}. 
If $\mu$ is as in Theorem \ref{ZhengThm} (ii), 
the partial cosoficity is precisely the content 
of Proposition \ref{StabIRSPartCosof}. 
\end{proof}

\begin{thm} \label{ManySubshiftsThm}
There is a continuum of pairwise nonisomorphic 
groups of the form $\lBrack T \rBrack'$, 
for $(X,T)$ a minimal subshift. 
\end{thm}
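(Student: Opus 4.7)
The plan is to reduce the counting of isomorphism classes of derived topological full groups to a counting of flip-conjugacy classes of minimal subshifts, using a reconstruction theorem, and then to exhibit continuum many such flip-conjugacy classes by means of a dynamical invariant preserved under flip-conjugacy.

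First I would invoke Matui's reconstruction theorem (Theorem 4.8 of \cite{Mat}), which asserts that for Cantor minimal systems $(X_1,T_1)$ and $(X_2,T_2)$, the groups $\lBrack T_1 \rBrack'$ and $\lBrack T_2 \rBrack'$ are isomorphic as abstract groups if and only if $(X_1,T_1)$ and $(X_2,T_2)$ are flip-conjugate, i.e.\ $(X_1,T_1)$ is topologically conjugate to $(X_2,T_2)$ or to $(X_2,T_2^{-1})$. In particular, to produce continuum many pairwise non-isomorphic groups of the form $\lBrack T \rBrack'$ arising from minimal subshifts, it suffices to produce continuum many flip-conjugacy classes of minimal subshifts.

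Next I would produce such an uncountable family using topological entropy $h_{\mathrm{top}}(T)$ as a distinguishing invariant. Since $T$ and $T^{-1}$ have the same topological entropy, and since topological entropy is a conjugacy invariant, it is also a flip-conjugacy invariant. By a classical construction of Grillenberger (with antecedents in work of Jewett and Krieger), for every real number $h \in [0,\log 2]$ there exists a minimal subshift $(X_h,\sigma_h) \subseteq \lbrace 0, 1 \rbrace^{\mathbb{Z}}$ with $h_{\mathrm{top}}(\sigma_h) = h$. Consequently $\lbrace (X_h,\sigma_h) : h \in [0, \log 2] \rbrace$ is a continuum of minimal subshifts of pairwise distinct entropies, hence pairwise non-flip-conjugate, so by Matui's theorem the groups $\lBrack \sigma_h \rBrack'$ are pairwise nonisomorphic.

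The main obstacle, such as it is, lies entirely in the invocation of Matui's reconstruction theorem, which is the deep input and is used here as a black box; the existence of minimal subshifts realising each prescribed value of the entropy is standard. As an alternative to entropy, one could distinguish flip-conjugacy classes by other invariants (for instance, the number or simplex structure of $T$-invariant ergodic probability measures), but entropy is the most economical choice.
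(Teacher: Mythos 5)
Your proposal is correct and follows essentially the same route as the paper, whose proof simply cites Matui (p.~246 of \cite{Mat}) for exactly this argument --- the reconstruction theorem reducing isomorphism of the groups $\lBrack T \rBrack'$ to flip conjugacy of the underlying systems, combined with the existence of uncountably many pairwise non-flip-conjugate minimal subshifts --- and also records an alternative via the explicit family of \cite{BradDona} distinguished by LEF growth functions. Two minor points to fix but which do not affect the argument: the isomorphism/flip-conjugacy theorem is not Theorem~4.8 of \cite{Mat} (check the numbering), and entropy $\log 2$ is not attained by a minimal binary subshift, so you should take $h \in [0,\log 2)$ or enlarge the alphabet.
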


\begin{proof}
This is proved, for example in \cite{Mat}[p.246]. 
Alternatively, an explicit continuous family of 
minimal subshifts 
$\lbrace (X_r,T_r) \rbrace_{r \in [2,\infty)}$ 
is constructed in Section 5 of \cite{BradDona}. 
It is shown that for $2 \leq r < r'$, 
the isomorphism-types of $\lBrack T_r \rBrack'$ 
and $\lBrack T_{r'} \rBrack'$ are distinguished 
by their \emph{LEF growth functions} 
(see the statements of Theorems 1.5 and 1.6 
from  \cite{BradDona}).
\end{proof}

\begin{thm} \label{MainThm}
There is a continuum of pairwise nonisomorphic 
finitely generated groups, 
which are locally stable but not weakly stable. 
\end{thm}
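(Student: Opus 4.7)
The plan is to assemble this theorem directly from the machinery already set up in the preceding section, invoking four ingredients. First, I would take the continuum-sized family $\{[\![T_r]\!]' : r \in I\}$ of pairwise nonisomorphic finitely generated groups produced by Theorem \ref{ManySubshiftsThm}, noting that finite generation is part of Theorem \ref{FullGrpSimpleThm}. Second, local stability of each member of the family is exactly Theorem \ref{FullGrpMainThm}. Thus the only non-routine point is to verify that none of these groups is weakly stable.

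For the failure of weak stability, I would invoke the Arzhantseva--Paunescu criterion (Theorem \ref{ArzPauThm}): for an amenable group, weak stability is equivalent to residual finiteness. So it suffices to check that each $[\![T_r]\!]'$ is amenable but not residually finite. Amenability follows from Theorem \ref{JusMonThm}, which gives amenability of $[\![T_r]\!]$, together with the fact that amenability passes to subgroups. For the failure of residual finiteness, I would use Theorem \ref{FullGrpSimpleThm}: each $[\![T_r]\!]'$ is an infinite simple group, and any infinite simple group fails to be residually finite (a residually finite group with a nontrivial element has a proper finite-index normal subgroup missing that element, contradicting simplicity in the infinite case).

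There is essentially no obstacle here; the theorem is a packaging statement. The one point worth spelling out carefully is the chain of implications that rules out weak stability, since weak stability is weaker than stability and the easy obstructions (such as presence of an infinite simple subgroup) do not by themselves suffice in general. It is precisely the amenability input from Juschenko--Monod together with Arzhantseva--Paunescu's IRS-free characterization (Theorem \ref{ArzPauThm}) that lets infinite simplicity alone close the argument. Combining these three observations with the continuum family of Theorem \ref{ManySubshiftsThm} yields continuum-many pairwise nonisomorphic finitely generated groups that are locally stable but not weakly stable, proving Theorem \ref{MainThm}.
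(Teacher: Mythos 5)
Your proposal is correct and follows essentially the same route as the paper: local stability from Theorem \ref{FullGrpMainThm}, amenability from Theorem \ref{JusMonThm}, failure of residual finiteness from the infinite simplicity in Theorem \ref{FullGrpSimpleThm}, then Theorem \ref{ArzPauThm} to rule out weak stability, and Theorem \ref{ManySubshiftsThm} for the continuum of isomorphism types. No gaps.
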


\begin{proof}
Let $(X,T)$ be a Cantor minimal subshift. 
By Theorem \ref{FullGrpMainThm} $\lBrack T \rBrack'$ is locally stable. 
By Theorems \ref{FullGrpSimpleThm} and \ref{JusMonThm}, 
$\lBrack T \rBrack'$ is finitely generated 
amenable but not residually finite. 
By Theorem \ref{ArzPauThm} we conclude 
that $\lBrack T \rBrack'$ is not weakly stable. 
The result then follows 
from Theorem \ref{ManySubshiftsThm}. 
\end{proof}

\section{Concluding remarks}

There are many sources of examples of amenable LEF groups, 
and it is interesting to ask which of these may be locally stable. 
For instance the (regular restricted) wreath product of any two 
amenable LEF groups is amenable LEF. 
It follows immediately from Lemma \ref{amenwlslem} 
that the class of finitely generated amenable 
weakly locally stable groups is closed 
under wreath products. 
We may therefore ask if the same holds 
when we strengthen ``weakly locally stable'' 
to ``locally stable''. 

\begin{qu} \label{WPBigQu}
Let $\Gamma$ and $\Delta$ be finitely generated 
amenable locally stable groups. 
Must $\Delta \wr \Gamma$ be locally stable? 
\end{qu}

As a modest first step, one may ask for the following. 

\begin{conj} \label{WPConj}
Let $\Delta$ be a finite group. 
Then $\Delta \wr \mathbb{Z}$ is locally stable
\end{conj}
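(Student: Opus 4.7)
The plan is to apply Corollary~\ref{IRSMainCoroll}(iii). Amenability of $\Delta \wr \mathbb{Z}$ follows from $\Delta$ being finite (hence amenable), $\mathbb{Z}$ being amenable, and closure of the amenable class under wreath products with amenable top groups. Fix the natural $d$-marking $\mathbf{S}$ on $\Delta \wr \mathbb{Z}$ consisting of a generating set of $\Delta$ placed at the $0$-th coordinate of the base, together with the generator $1$ of $\mathbb{Z}$; and let $(\Delta_n, \mathbf{T}_n) = (\Delta \wr (\mathbb{Z}/n\mathbb{Z}), \mathbf{T}_n)$ carry the analogous marking. For $n > 2r$ the projection induces a bijection $B_{\mathbf{S}}(r) \to B_{\mathbf{T}_n}(r)$, so $(\Delta_n, \mathbf{T}_n)$ converges to $(\Delta \wr \mathbb{Z}, \mathbf{S})$ in $\mathcal{G}_d$. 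It then remains to show that every ergodic $\mu \in \IRS(\Delta\wr\mathbb{Z},\mathbf{S})$ arises as a weak-$*$ limit in $\IRS(\mathbb{F},\mathbf{X})$ of some sequence $\nu_n \in \IRS(\Delta_n, \mathbf{T}_n)$.

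The na\"ive choice $\nu_n = (\pi_n)_*\mu$, for $\pi_n : \Delta \wr \mathbb{Z} \twoheadrightarrow \Delta_n$ the defining quotient, will not work in general. As a counterexample, take $\Delta = \mathbb{Z}/2$ and let $\mu$ be the pushforward of the Bernoulli$(1/2)$ measure on $\{0,1\}^{\mathbb{Z}}$ under $\omega \mapsto H_\omega := \langle e_k : \omega_k = 1 \rangle$, where $e_k$ is the basis vector of the base $(\mathbb{Z}/2)^{(\mathbb{Z})}$ at coordinate $k$. Shift-invariance of Bernoulli makes $\mu$ an IRS, with $\mu(\{H : e_0 \in H\}) = 1/2$; but for any $n$ the set of indices $k \equiv 0 \pmod n$ is infinite, so $e_0 \in \pi_n(H_\omega)$ almost surely, and $(\pi_n)_*\mu(\{K : e_0 \in K\}) = 1$. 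The na\"ive pushforward ``folds'' long-range information back onto small balls and thereby places spurious mass on $C_{r,W}$.

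The correct approach should build $\nu_n$ by \emph{truncating} (rather than periodising) the data defining $\mu$. This requires a classification of the ergodic IRS of $\Delta \wr \mathbb{Z}$ analogous to Zheng's classification (Theorem~\ref{ZhengThm}): an ergodic IRS should be described by a shift-ergodic random subgroup of the locally finite base $N = \Delta^{(\mathbb{Z})}$ together with data specifying how the $\mathbb{Z}$-part lifts, after which the corresponding $\nu_n$ on $\Delta_n$ would be the analogous construction applied to the restriction of this random subgroup to an interval of length $n$; convergence $\nu_n \to \mu$ would then follow from the criterion~(\ref{W*ConvCrit}) combined with an ergodic-theorem argument ensuring that restriction correctly reflects the local behaviour of $H$ on $B_{\mathbf{S}}(r)$. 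The main obstacle is producing such a classification for arbitrary finite $\Delta$: when $\Delta$ is abelian one can essentially reduce to independent coordinates via Pontryagin duality (as in the Levit--Lubotzky proof of full stability \cite{LeLu}), but for non-abelian $\Delta$ the subgroup lattice of $\bigoplus_\mathbb{Z} \Delta$ is considerably more intricate, and a fresh argument is needed. As a route that may avoid a full classification, one could instead apply Lemma~\ref{IRSApproxLem} to reduce to finite-index IRS $\mu_m \to \mu$ of $\Delta \wr \mathbb{Z}$ --- each of which is determined by finitely many conjugacy classes of finite-index subgroups and so can be periodised into some $\Delta_{n(m)}$ directly for large $m$ --- and conclude by a diagonal argument.
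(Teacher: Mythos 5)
You are attempting Conjecture \ref{WPConj}, which the paper explicitly leaves open: no proof is given there, and the only case recorded as known is $\Delta$ abelian, where $\Delta \wr \mathbb{Z}$ is in fact fully stable by \cite{LevLubWP}. So there is no proof in the paper to compare against; the question is whether your argument closes the gap, and it does not. Your framework --- invoking Corollary \ref{IRSMainCoroll}(iii), taking the marked finite groups $\Delta \wr (\mathbb{Z}/n\mathbb{Z})$ converging to $\Delta \wr \mathbb{Z}$ in $\mathcal{G}_d$, and observing (with a correct counterexample) that the na\"ive pushforward IRSs fail to converge --- is sensible and mirrors the paper's treatment of topological full groups via Zheng's classification. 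But the decisive step, producing $\nu_n \in \IRS\big(\Delta \wr (\mathbb{Z}/n\mathbb{Z}), \mathbf{T}_n\big)$ with $\nu_n \to \mu$ for an \emph{arbitrary} ergodic $\mu \in \IRS(\Delta\wr\mathbb{Z},\mathbf{S})$, is exactly what is missing, and you acknowledge that the classification of ergodic IRSs of $\Delta \wr \mathbb{Z}$ for nonabelian $\Delta$ which your truncation scheme requires is not available. As it stands this is a plan, not a proof.

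Moreover, your proposed shortcut is not merely incomplete but cannot work. Lemma \ref{IRSApproxLem} does not supply finite-index IRSs of $\Gamma$ converging to a given $\mu \in \IRS(\Gamma)$; in the proof of Theorem \ref{IRSMainThm} the approximating finite-index IRSs are IRSs of the free group $\mathbb{F}$ (via \cite{BeLuTh}, Proposition 6.6), not of $\Gamma$. And for nonabelian finite $\Delta$, no sequence of finite-index IRSs of $\Gamma = \Delta \wr \mathbb{Z}$ can converge to, say, $\delta_{\{e\}}$: in any finite quotient of $\Gamma$ the image of the $\mathbb{Z}$-generator has some finite order $m$, so the image of the coordinate copy $\Delta_0$ commutes with the image of $\Delta_m$, which is the same subgroup; hence the image of $\Delta_0$ is abelian, and a fixed nontrivial commutator $c \in \Delta_0$ lies in every finite-index normal subgroup, hence (via normal cores) in every finite-index subgroup of $\Gamma$. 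Every finite-index IRS of $\Gamma$ therefore gives mass $1$ to the clopen set $\{ H \leq \mathbb{F} : w \in H \}$, where $w$ is any word with $\pi_{\mathbf{S}}(w) = c$, whereas $\delta_{\{e\}}$, viewed as $\delta_{\ker \pi_{\mathbf{S}}} \in \IRS(\mathbb{F},\mathbf{X})$, gives it mass $0$; clopen masses pass to weak-$\ast$ limits, so no such approximation exists. Equivalently, if every IRS of $\Gamma$ were a limit of finite-index IRSs of $\Gamma$, the Becker--Lubotzky--Thom criterion would make the amenable group $\Gamma$ stable, contradicting Theorem \ref{ArzPauThm}, since for nonabelian $\Delta$ the group $\Gamma$ is not residually finite. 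So the fallback route must be discarded, and the conjecture remains open on your account as on the paper's.
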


Note that by \cite{LevLubWP}, 
the wreath product of any two finitely generated abelian groups 
is stable. 
In particular, the special case of Conjecture \ref{WPConj} for which $\Delta$ 
is abelian is known to hold. 
By contrast, if $\Delta$ is nonabelian, 
then $\Delta \wr \mathbb{Z}$ is amenable but not residually finite, 
so not even weakly stable. 

In a similar vein, one may ask for a generalization of Theorem \ref{AltEnrLocStabThm}. 
For any group $\Gamma$, the group $\FAlt(\Gamma)$ of finitely supported 
even permutations of the \emph{set} $\Gamma$ 
is normalized by the image in $\Sym(\Gamma)$ of the regular representation 
of $\Gamma$. We therefore obtain a semidirect product 
$\mathcal{A}(\Gamma) = \FAlt(\Gamma) \rtimes \Gamma$, 
the \emph{alternating enrichment} of $\Gamma$. 
If $\Gamma$ is respectively finitely generated, amenable or LEF, 
then so is $\mathcal{A} (\Gamma)$. 
On the other hand, if $\Gamma$ is infinite, 
then $\mathcal{A} (\Gamma)$ is not residually finite. 

\begin{qu}
Under what conditions on $\Gamma$ is $\mathcal{A}(\Gamma)$ locally stable? 
\end{qu}

Next, one may ask for other applications of 
Proposition \ref{liftingalmosthomsprop}. 
Many famous examples of ``monster'' groups are constructed 
as limits of sequences of marked epimorphisms of $d$-marked groups. 
For instance, Tarski monsters and free Burnside groups both arise 
in this way, as limits of sequences of marked epimorphisms 
of finitely presented groups satisfying a ``small-cancellation'' condition 
(see for instance \cite{Olsh}). 
It remains a well-known open problem whether or not such groups are LEF 
(see for instance Problem 5.14 (ii) of \cite{Capr}: 
if it were even the case that such monster groups were not 
limits in $\mathcal{G}_d$ of nonabelian finite simple groups, 
then dramatic consequences would follow). 
It is therefore also interesting to ask whether such groups are locally stable, 
which would follow from a positive answer to our next question. 

\begin{qu}
Is every finitely presented small-cancellation group stable? 
\end{qu}

One may further ask to what extent Proposition \ref{liftingalmosthomsprop} may 
be generalized. Although Remark \ref{LimitQuotRmrk} shows that 
local stability is not a closed property in $\mathcal{G}_d$, 
it is reasonable to see the existence of a sequence of marked 
(locally) stable groups converging in $\mathcal{G}_d$ 
to (a $d$-marking of) $\Gamma$ 
as evidence that $\Gamma$ is locally stable, 
especially if the groups in the sequence are in some sense ``uniformly'' stable. 
For instance, finite rank free groups are surely ``at least as stable'' 
as any other finitely generated groups. 
The closure in $\mathcal{G}_d$ of the set of ($d$-markings of) free groups 
of rank $\leq d$ is precisely the set of $d$-generated \emph{limit groups}. 
Beyond free groups and free abelian groups, 
the only limit groups for which any stability results are known are 
the fundamental groups of closed oriented surfaces 
(as described in \cite{LazLevMin}; see below). 
Moreover, every limit group is finitely presented, 
so local stability can be upgraded automatically to stability. 

\begin{conj}
Every finitely generated limit group is permutation stable. 
\end{conj}

There is a generalization of permutation stability, 
called \emph{flexible stability}, 
under which we may slightly enlarge the finite domains on which the images of 
our almost-homomorphisms act before seeking asymptotically equivalent actions 
on those domains. 
For example, fundamental groups of closed oriented surfaces 
are flexibly stable \cite{LazLevMin}, but the question of their stability remains open. 
In another direction, it is known that the groups $\SL_d (\mathbb{Z})$ ($d \geq 3$) 
are not stable \cite{BecLubT}, but unknown whether they are flexibly stable. 
By \cite{BowBur}, if $\SL_d (\mathbb{Z})$ is flexibly stable for some $d \geq 5$, 
then there exists a nonsofic group. 
Just as local stability generalizes stability, 
one may analogously define a notion of flexible local stability, 
generalizing flexible stability. 
A flexibly locally stable sofic group is still necessarily LEF. 

\begin{prob} 
Find examples of groups which are flexibly locally stable but not locally stable. 
\end{prob}

In a related direction, 
the following question was posed by A. Lubotzky. 

\begin{qu} \label{PropTQu}
Does there exist a finitely generated locally stable group with Kazhdan's Property $(T)$? 
\end{qu}

The corresponding question for stable groups 
has a negative answer \cite{BecLubT}; 
in particular, no example satisfying 
Question \ref{PropTQu} may be found among finitely presented groups. 
Note that all of the locally stable but non-stable 
groups we have constructed are amenable, 
hence far from being Property $(T)$ groups. 

%... it would be interesting to investigate 
%the implications of our results in computer science. 
%A consistent motivation for the study of permutation 
%stability has been its relevance to ``property-testing'' 
%problems (). 
%For instance, ...

%\begin{prob}
%Explore connections between local permutation 
%stability and property-testing 
%for equations in permutations. 
%Define a reasonable notion of a ``locally testable'' 
%system of equations. 
%\end{prob}

Finally, there are many other versions of 
``stability of metric approximations for groups'' 
besides stability in permutations. 
For any reasonable family of compact groups 
equipped with bi-invariant metrics $d$, 
one can define ``almost homomorphism'' in just 
the same way as for the Hamming metrics 
on the finite symmetric groups. 
Corresponding to the definition of a sofic group, 
there is a notion of ``$d$-approximable'' group, 
and a notion of a ``$d$-stable'' group 
corresponding to a group which is stable in 
permutations. 
We have alluded to \emph{Frobenius approximable} 
and \emph{Frobenius stable} groups in the 
Introduction, but one may also consider, 
for example, the Hilbert-Schmidt metrics on 
unitary groups (the groups which are 
HS-approximable are precisely the \emph{hyperlinear} 
groups); the rank metrics on groups 
of invertible matrices over fields 
(leading to the class of \emph{linear sofic} groups), 
or all finite groups equipped with bi-invariant 
metrics, leading to the \emph{weakly sofic} groups 
(see \cite{ArzChe} and the references therein; 
see \cite{ArPa} for a characterization 
in terms of liftings of homomorphisms to 
metric ultraproducts). 
In all these cases, 
finitely generated groups which are $d$-approximable 
and $d$-stable must be residually finite. 
One may similarly conceive of a notion of 
$d$-local stability, 
such that $d$-approximable 
and $d$-locally stable implies LEF. 

\begin{prob} \label{MetStabProb}
For each type of metric approximation discussed above, 
study the class of ``$d$-locally stable'' groups. 
Produce examples of groups which are 
$d$-locally stable but not $d$-stable. 
\end{prob}

Since circulation of a preliminary version of the present work, the last three Problems have been addressed 
in \cite{F3GerSpa}. 
Therein, the following are achieved: 
\begin{itemize}
\item[(i)] A general framework for metric local stability 
is described, which incorporates a notion of 
flexible local stability; 

\item[(ii)] Lubotzky's Question \ref{PropTQu} 
is answered in the negative; 

\item[(iii)] Problem \ref{MetStabProb} is studied 
in particular for the Hilbert-Schmidt distance 
on the unitary groups $U(n)$. 
\end{itemize}

\subsection*{Acknowledgements}

I am grateful to Goulnara Arzhantseva; Oren Becker; Francesco Fournier-Facio 
and Alex Lubotzky, whose helpful comments and questions on 
a preliminary version of this work have helped to inform my 
thinking on this subject. 
Finally, I am glad to thank the anonymous referee, 
whose corrections have improved the exposition of this paper greatly, 
and whose suggestions were so intelligently formulated 
as to make their report on my paper an unusual pleasure to read.

\end{document}